\def\ch{{\rm ch}}
\def\sh{{\rm sh}}


\documentclass [11pt] {article}

\usepackage{amssymb,amsfonts,amsmath,amsthm}
\usepackage[cp866nav]{inputenc}
\usepackage[english]{babel}

\textwidth=32pc  \textheight=52.5pc  \mathsurround=2pt

\newtheorem{remark}{\large Remark}

\newtheorem{theorem}{\large Theorem}
\newtheorem{corollary}{\large Corollary}
\newtheorem{lemma}{\large Lemma}

\begin{document}

\title
{Busy period,  time of the first loss of a customer   and the number of
 customers in $ {\rm M^{\varkappa}|G^{\delta}|1|B} $ }

\date{ }
\author {V. Kadankov  \thanks{Institute of Mathematics
of the  Ukrainian National Academy of Sciences 3, Tereshchenkivska
st. 01601, Kyiv-4, Ukraine; phone:  452-00-55. E-mail:
kadankov@voliacable.com.}\;
 T. Kadankova
\thanks{Vrije Universiteit Brussel, Department of Mathematics, 
 Building G, Pleinlaan 2,  \newline 1050  Brussels,  Belgium,  tel.: +32(0) 2 6626   34 68  ,\: e-mail:
tetyana.kadankova@vub.ac.be}
 N. Veraverbeke
\thanks{Hasselt University,  Center for Statistics, Building D, 3590
Diepenbeek, Belgium,  \newline tel.: +32(0)11 26 82 37,\: e-mail:
noel.veraverbeke@uhasselt.be}}

\maketitle

\noindent {\bf  Key words:}  busy period,  time of the first loss of a customer,
 first exit time, value of the overshoot, linear component,   resolvent sequence.

\noindent{\bf Running head:}
Busy period,  time of the first loss of a customer and the number of
  customers
\\ {\bf 2000 Mathematics subject classification: } 60G40;
60K20
\begin{abstract}
A two-sided exit problem is solved for a difference of a compound Poisson
process and a compound renewal process. More precisely, the Laplace transforms
of the joint distribution of the first exit time, the value of the
overshoot  and the value of a linear component at this  instant
are found.
Further, we   study   the process reflected in its supremum.
We  determine the  main two-boundary  characteristics of the process
reflected in its supremum.
These results are then  applied for studying the
$ {\rm M^{\varkappa}|G^{\delta}|1|B}$ system.
We  derive the  distribution of a  busy period and the numbers of customers
in the system in  transient and stationary regimes.
The advantage is that these results are in a closed form, in terms of
resolvent sequences of the process.
\end{abstract}

{\bf\Large  Introduction}
Queueing systems with batch arrivals and finite buffer have wide applications
in the performance evaluation, telecommunications and manufacturing systems.
One of the crucial performance issues of the single-server queue with finite
buffer  room is losses, namely, customers (packets, cells, jobs) that are not
allowed to enter the system due to the buffer overflow. This issue is
especially important in the analysis of telecommunication networks. Motivated
by this fact, we derived the most important performance measures of  queueing
systems of this type. More precisely,  we considered the
$  M^{\varkappa}|G^{\delta}|1|B $  queueing system with finite buffer and its
modification.
We consider partial rejection, meaning  that if an
overflow of  buffer  occurs due to the arrival of a  batch of customers,
the amount of work brought by this batch  is only partially admitted to  the
buffer, up to the limit of the free buffer space just before the arrival.
The rest is rejected and  therefore, is lost.

Evolution of the number of  customers in such systems is described by a
process with two reflecting  boundaries. In general case this process is a
difference of two  compound renewal processes. Reflections from the upper
boundary are generated by the supremum (infimum) of the process. Reflections
from the lower boundary govern the server's behavior. In general, such processes
are not Markovians, but by  adding a complementary linear component
(in some literature  called  age process), we obtain a Markov process, which
describes  functioning of the queueing system.
Studying  main characteristics of the system results to the investigating  the
two-boundary functionals of the governing  process.
We  applied the solutions  of the  two-sided  exit  problem  for the governing
process (see  \cite{2Ka13} and \cite{2Ka2} for the  methodology)
to obtain performance measures of interest.
For the queueing system of $ M^{\varkappa}|G^{\delta}|1|B,$
$ {\rm G^{\delta}| M^{\varkappa}|1|B} $ (see\cite{2Ka17})
type the governing
process is the difference of the compound Poisson process and the compound
renewal process complemented with the age process.
The main result of this paper is the closed form formulae for the Laplace
transforms of the busy period, time of the first loss and the number of
customers in the system  at arbitrary time.

First passage times of the level by  L\'{e}vy processes in context of queues
were  considered  in  \cite{Dube2004}, where the  explicit characterization
of the Laplace transform of the busy period distribution was found for a  finite
capacity  $ M|G|1 $ queue, see also \cite{Perry2000}, 
\cite{Chydzinski2007}  and \cite{Bekker2008}.
In regard to finding the buffer overflow time we refer to  \cite{Asmussen2002}, where arrivals are modeled  by a Markov modulated Poisson  process  (MMPP) and  service  time is  exponential, and also  to  \cite{Chydzinski2007}, where $ BMAP|G|1|b $ queue was considered. Previous works on
the overflow period were concentrated on simple Poisson arrivals
\cite{Boer2001}, \cite{Chydzinski2004}, 
or renewal arrivals  \cite{Fakinos1982}
\cite{Pacheco2008}.
Picard and Lef\`{e}vre \cite{Lefevre 1} found the distribution of the first
crossing time of a Poisson process and renewal process  in terms of polynomials
of Abel-Gontcharoff types.

In recent years there has been a great interest in analyzing various queueing
models with MAP (Markov Arrival  Process) as input process or  MSP
(Markov service process). MAP is used to represent  correlated traffic arising
in modern telecommunication networks.
In  systems  with   Markov  arrival  or service  processes (MAP, BMAP, or BMSP)
and their modifications, it is  common  to  use the supplementary variable
methods and/or  embedded Markov chains.
For the method of supplementary variable we refer, for instance, to
\cite{Gupta2006}, where   $GI|MSP|1$ queue with finite as well as infinite
buffer was analyzed.
Embedded Markov  chains techniques were used  by \cite{Dukhovny1996}.
De  Boer  et.al.  \cite{Boer2001}
studied stationary distribution of the remaining service time upon reaching some
target level in an $ M|G|1$ system. The asymptotic analysis of $G|MSP|1|r$  queue
has been carried out by \cite{Bocharov2003}.  Banik et.al. \cite{Banik2008}
found steady state  distribution  for a finite-buffer single-server queue
$GI|BMSP|1|N$ with renewal input.  Random size batch service queueing models
were  subject  of study in  \cite{Economou2003},  \cite{Chaplygin2003}
(stationary analysis  of $GI|BMSP|1$  queue)  and \cite{Kim2007}
(asymptotic behavior of the loss probability).

Hence,  the majority of  recent  literature is devoted mainly to the queue size and
workload, most of the  times in the steady state  case. However, recently it
was shown that steady-state parameters  do not reflect the reality. A detailed
discussion of the drawbacks of steady-state parameters in telecommunication networks may be found in  \cite{Schwefel2001}.
This remark   emphasizes  importance  of studying  main  performance  measures
in transient  regime, which is  the  topic  of  this article.
The main two-boundary characteristic of the governing process is the joint
distribution of $ \{\chi,L,T\} $, i.e. of the first exit time from the interval,
the value of the overshoot and the value of the linear component at this
instant.
For the overview of  the existing results on the two-boundary
problems we refer to \cite{2Ka13}. And  here we only mention several
authors who contributed a lot in the development
of this  area.  Starting  from Kemperman (1963), Takacs  (1966),
Emery \cite{Emery}, Pecherskii \cite{Pe},  Suprun, Shurenkov
(\cite{Su}, \cite{Sh}), Lambert \cite{Lamb1}, Doney \cite{Don1},
Avram, Kyprianou, Pistorious \cite{Avram2004}, Pistorious \cite{Pist2},
Kyprianou, Palmowsky  \cite{Kyp1}, and Kadankov, Kadankova (\cite{2Ka2},
\cite{2Ka15}) studied one-  and  two-boundary characteristics for  different classes of stochastic processes.

The Laplace transforms of the joint distribution of the first exit time and
the value of the overshoot at this time instant for general  L\'{e}vy processes
and random walks have been determined in \cite{2Ka2}. The Laplace transforms
of this joint distribution were found in terms of the Laplace transforms of the
one-boundary characteristics of the process. This method for L\'{e}vy processes and
random walks  \cite{2Ka2} was then applied for other classes of stochastic
processes, such as the difference of compound renewal processes \cite{2Ka15},
 and semi-Markov random walks with linear drift  \cite{KAD44}.

The rest of the article is structured as follows.
In Section 1   we introduce  the process,  necessary notation and consider
the   one-boundary characteristics of the process.
The two-sided problem is solved in Section 2. In this section we also  prove
the  weak convergence  of the joint  distribution of the supremum, infimum
and the value of the process to the corresponding distribution of the symmetric Wiener process.
Section 3  deals with  the reflected processes. We consider the  first  passage of the lower  boundary,
distribution of the increments of the  process  and its asymptotic behavior.
Finally, in Section 4 we apply the results obtained in the previous sections
for studying  several characteristics of the queueing system $ {\rm M^{\varkappa}|G^{\delta}|1|B}, $
such as busy period,  time of the first loss of a customer and the
number of  customers in the system in transient and stationary regimes.

\section {Preliminaries  and  definitions }       

Let  $\varkappa,\,\delta \in \mathbb{N}=\{1,2,\dots\}$ be positive
independent  integer   random variables and  let $\eta\in(0,\infty)$ be
a positive  random variable independent of $\varkappa, \delta $ with
the  distribution function  $ F(x)=\bold P\left[\eta\le x\right],$ $
x\ge0.$
We will assume that
$\bold E \varkappa,$ $\bold E \delta,$ $\bold E\eta<\infty.$
 Introduce the sequences $\{\eta,\eta^{\prime}_{n}\},$
$\{\varkappa,\varkappa^{\prime}_{n}\},$
$\{\delta,\delta^{\prime}_{n}\},$ $n\in\mathbb{N} $ of independent
identically distributed (inside of each   sequence)  variables and
define the monotone sequences
\begin{align}                                       \label{dmg1B1}
&  \eta_0(x)=0,\quad \eta_{1}(x)=\eta_{x},\quad
   \eta_{n+1}(x)=\eta_{x}
   +\eta_{1}^{\prime}+\cdots+\eta_{n}^{\prime},
   \qquad n\in \mathbb{N},\\
&  \varkappa_{0}=0, \quad
   \varkappa_{n}=\varkappa^{\prime}_{1}+\cdots+\varkappa^{\prime}_{n};
   \qquad  \delta_{0}=0, \quad
   \delta_{n}=\delta^{\prime}_{1}+\cdots+\delta^{\prime}_{n}; \qquad
   n\in \mathbb{N},\notag
\end{align}
where $\eta_{x}\in(0,\infty) $  is a random variable with  the
following  distribution function
$$
   F_x(u)= \bold P\left[\eta_{x}\le u\right]=
   [F(x+u)-F(x)](1-F(x))^{-1}\qquad u\ge0.
$$
Denote  by  $\{\pi(t)\}_{t\ge 0}\in\mathbb {Z}^{+}=\{0,1,\dots\}$
a compound  Poisson process  with  the generating function of the form
$$
   \bold E\,\theta^{\pi(t)}=e^{tk(\theta)},\qquad
   k(\theta)=\mu\left(\bold E\,\theta^{\varkappa}-1\right),
   \quad |\theta|\le1,
$$
where  $\mu>0$   is the intensity of the jumps and $\varkappa$ is a
jump size.  For all $t\ge0 $  define  a renewal process generated by
the random sequence $\{\eta_{n}(x)\}_{n\in\mathbb{Z}^{+}}$ as follows
$$
   N_{x}(t)=\max\{n\in \mathbb{Z}^{+}:\eta_{n}(x)\le t\}
   \in\mathbb{Z}^{+},\qquad x\ge0.
$$
Introduce a  right-continuous step  process
for all  $ x\ge0 $
\begin{align}                                    \label{dmg1B2}
   D_{x}(t)= \pi(t)-\delta_{N_{x}(t)}
   \in\mathbb{Z}=\{0,\pm1,\dots \},
   \qquad t\ge0;\quad D_{x}(0)=0.
\end{align}
Note, that inter-arrival times of the positive jumps are
exponentially distributed with parameter $\mu,$  the positive jumps
themselves  are of a random size $\varkappa,$ and  there occur
negative jumps of size $\delta_{n}^{\prime} $ at time instants  $
\eta_{n}(x),  $ $n\in\mathbb{N}. $ We will call the process
$ \{D_{x}(t)\}_{t\ge0}$  a difference of the compound  Poisson process
and a compound  renewal process. Observe, that  this process is not a
Markov process in general. For all  $ t\ge0 $ introduce a
right-continuous linear  component
\begin{align}                                     \label{dmg1B3}
   \eta_{x}^{\,+}(t)= \left\{
   \begin{array}{l}
   t+x,\qquad \quad  0\le t< \eta_x, \\
   t-\eta_{N_{x}(t)}(x),\qquad t\ge\eta_x
   \end{array}
   \right. \in\mathbb R_{+}=[0,\infty),\qquad x\ge0.
\end{align}
The process  $ \left\{\eta_{x}^{\,+}(t)\right\}_{t\ge0}$ increases
linearly  on  the intervals  $ [\eta_{n}(x),\,\eta_{n+1}(x)),$
$n\in\mathbb{Z}^{+},$  it is killed to zero at the   points $
\eta_{n}(x),$ $n\in\mathbb{N},$  and the value of the process at  the
instant $ t_0\ge\eta_{x}$  is  equal to the time elapsed from the
moment of the last negative jump   of the process (\ref{dmg1B2})  till $
t_{0}.$ We will call the process (\ref{dmg1B3}) a linear component  (sometimes referred to as  the age process).
By adding this linear  component to the process
$ \left\{D_{x}(t)\right\}_{t\ge0}$
we  obtain a right-continuous
Markov process
\begin{align}                                         \label{dmg1B4}
   \{X_{t}\}_{t\ge0}=
   \left\{D_{x}(t),\,\eta_{x}^{\,+}(t) \right\}_{t\ge0}
   \in \mathbb{Z}\times\mathbb R_{+},
   \qquad  X_{0}=\{0,x\},\quad x\ge0,
\end{align}
which governs the process  $ \{D_{x}(t)\}_{t\ge0}.$ The process  defined in
(\ref{dmg1B4})  is a Markov process. Note, that it is   homogeneous
with respect to the first  component  \cite{ES}. This means  that if
$X_{t_0}=\{k,u\},$ $k\in\mathbb{Z},$ $u\ge0,$  then  the evolution
of the process $\{X_{t}\}_{t\ge t_0}$  in the sequel  does not
depend  on the value $k$ of the first component, and the first
positive jump of the process $\{D_{x}(t)\}_{t\ge t_0}$ (which is
distributed as  $\varkappa$) will occur   after  an exponential
period of time  with parameter  $\mu. $  The first  negative  jump
of the process  $\{D_{x}(t)\}_{t\ge t_0}$ (which is  distributed as
$\delta)$ will take  place after elapsing  of time  $\eta_u.$
This  fact  will
be used constantly when setting up the equations.

Here and in the sequel we assume that the random variable
$ \delta\in\mathbb{N}$ is geometrically distributed with parameter
$\lambda \in[0,1):$
\begin{align*}
   \bold P[\delta=n]=
   (1-\lambda)\lambda^{n-1},
   \quad n\in\mathbb{N},\qquad
   \bold E\theta^{\delta}=
   \theta\frac{1-\lambda}{1-\lambda\theta},\quad  |\theta|\le1.
\end{align*}
This assumption means that the process
$ \{D_{x}(t)\}_{t\ge 0} $ has geometrically distributed negative jumps
at  time instants $ \{\eta_{n}(x)\}_{n\in\mathbb{N}}.$
Throughout  the article  we will use the following notation
$\delta\sim ge(\lambda). $
In this  case it is possible to obtain closed form solutions for the one
and the two-sided boundary  problems.
Our task now is to determine  the Laplace transforms of the joint
distributions of  the  upper and lower one-boundary functionals of
the process $ \{X_{t}\}_{t\ge 0}.$ In the sequel we  will use the
following  result.

\begin{lemma}  \label{ldmg1B1}    

Let  $ \tilde f(s)=\bold Ee^{-s\eta}.$
Then for $s>0$ the equation
\begin{align}                                         \label{dmg1B5}
   \theta-\lambda=(1-\lambda)\tilde f(s-k(\theta)),
   \qquad |\theta|<1
\end{align}
has a unique solution $c(s)$ inside the circle  $|\theta|<1.$ This
solution is positive and  $c(s)\in(\lambda,1).$ If  $\bold
E[\varkappa], \bold E[ \eta]<\infty,$ $\rho= \mu(1-\lambda)\bold
E[\varkappa]\,\bold E[\eta],$ then   for  $\rho>1,$
$\lim_{s\to 0}\limits c(s)=c\in(\lambda,1);$ and for  $\rho\le 1,$
 $\lim_{s\to 0}\limits c(s)=1.$
\end{lemma}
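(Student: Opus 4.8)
I would obtain existence and uniqueness of the root from Rouch\'e's theorem together with the intermediate value theorem, and then analyse the behaviour as $s\downarrow0$ via the limiting equation obtained by formally setting $s=0$. The starting point is that $\theta\mapsto\tilde f(s-k(\theta))$ is analytic on $\{|\theta|<1\}$ and continuous on $\{|\theta|\le1\}$: since $\varkappa\in\mathbb N$, the series $\bold E\theta^{\varkappa}=\sum_{n\ge1}\bold P[\varkappa=n]\theta^{n}$ converges absolutely on $|\theta|\le1$ with $|\bold E\theta^{\varkappa}|\le1$, so $k(\theta)$ is analytic on the open disc and continuous on the closed disc with $\operatorname{Re}k(\theta)\le0$; hence $\operatorname{Re}(s-k(\theta))\ge s>0$, which keeps $s-k(\theta)$ inside the half-plane $\{\operatorname{Re}z>0\}$ where $\tilde f(z)=\int_{0}^{\infty}e^{-zu}\,dF(u)$ is analytic. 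I shall use repeatedly that $\tilde f$ is strictly decreasing on $[0,\infty)$ (because $\eta>0$ a.s.), that $\tilde f(0)=1$ and $\tilde f(s)<1$ for $s>0$, and that $|\tilde f(z)|\le\tilde f(\operatorname{Re}z)$.

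For uniqueness I would apply Rouch\'e's theorem on $|\theta|=1$ to the functions $\theta-\lambda$ and $-(1-\lambda)\tilde f(s-k(\theta))$: there $|\theta-\lambda|\ge1-\lambda$, whereas $(1-\lambda)|\tilde f(s-k(\theta))|\le(1-\lambda)\tilde f(\operatorname{Re}(s-k(\theta)))\le(1-\lambda)\tilde f(s)<1-\lambda$, so the second term is strictly dominated and $\phi_s(\theta):=\theta-\lambda-(1-\lambda)\tilde f(s-k(\theta))$ has exactly one zero in $\{|\theta|<1\}$, which I call $c(s)$. To show that this zero is real and lies in $(\lambda,1)$, I restrict $\phi_s$ to real $\theta\in[\lambda,1]$ (where $s-k(\theta)\ge s>0$) and use $\phi_s(\lambda)=-(1-\lambda)\tilde f(s-k(\lambda))<0$ and $\phi_s(1)=(1-\lambda)\bigl(1-\tilde f(s)\bigr)>0$: the intermediate value theorem then produces a real root in $(\lambda,1)$, which by the uniqueness above must be $c(s)$. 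Finally, for $0<s<s'$ one has $\phi_s<\phi_{s'}$ pointwise on $[\lambda,1]$ since $\tilde f$ is strictly decreasing, so $\phi_{s'}(c(s))>0>\phi_{s'}(\lambda)$ and hence $c(s')\in(\lambda,c(s))$; thus $c(\cdot)$ is strictly decreasing and $c(0+):=\lim_{s\downarrow0}c(s)\in(\lambda,1]$ exists.

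For the asymptotics, assuming $\bold E\varkappa,\bold E\eta<\infty$, I would examine $\psi(\theta):=\theta-\lambda-(1-\lambda)\tilde f(-k(\theta))$ on $[\lambda,1]$, where $-k(\theta)=\mu(1-\bold E\theta^{\varkappa})\ge0$ vanishes only at $\theta=1$. Using $\tilde f^{(j)}(z)=(-1)^{j}\bold E[\eta^{j}e^{-z\eta}]$ (so $\tilde f'<0$, $\tilde f''>0$ on $(0,\infty)$) and $k'>0$, $k''\ge0$ on $[0,1]$, a short computation yields $\psi''<0$ on $(\lambda,1)$, i.e.\ $\psi$ is strictly concave; one also has $\psi(\lambda)<0$, $\psi(1)=0$, and, since $\bold E\eta<\infty$ gives $\tilde f'(0)=-\bold E\eta$, the one-sided derivative $\psi'(1)=1-\mu(1-\lambda)\bold E\varkappa\,\bold E\eta=1-\rho$. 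If $\rho\le1$, strict concavity forces $\psi(\theta)<\psi(1)+\psi'(1)(\theta-1)=(1-\rho)(\theta-1)\le0$ for $\theta\in[\lambda,1)$, so $\theta=1$ is the only zero of $\psi$ on $[\lambda,1]$; since a value $c(0+)<1$ would satisfy $\psi(c(0+))=0$ by letting $s\downarrow0$ in $\phi_s(c(s))=0$, this is impossible and $c(0+)=1$. If $\rho>1$, then $\psi'(1)<0$, so $\psi>0$ just below $1$ while $\psi(\lambda)<0$, and concavity makes $\{\psi\ge0\}=[c,1]$ for a unique $c\in(\lambda,1)$ with $\psi(c)=0$; moreover $\tilde f(s-k(\theta))<\tilde f(-k(\theta))$ for $s>0$ gives $\phi_s>\psi$ on $[\lambda,1]$, hence $\phi_s(c)>0>\phi_s(\lambda)$ and therefore $c(s)\in(\lambda,c)$ for every $s>0$; passing to the limit, $c(0+)\le c<1$, and being a zero of $\psi$ in $(\lambda,1]$ it equals $c$.

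The routine parts are the sign checks and the differentiation of $\psi$. The one genuinely delicate step is the last one, namely excluding $c(0+)=1$ in the supercritical case $\rho>1$; this is where the strict comparison $\phi_s>\psi$ (strict monotonicity of $\tilde f$) together with the concavity of $\psi$ — which guarantees that $\{\psi\ge0\}$ is an interval with left endpoint exactly the relevant root $c$ — is essential, the dichotomy between the two regimes being decided precisely by the sign of $\psi'(1)=1-\rho$.
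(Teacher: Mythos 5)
Your argument is correct, and it is essentially the route the paper itself points to: the paper gives no written-out proof but refers to Spitzer's classical reasoning (and Lemma 1 of the cited companion paper), which is exactly this combination of Rouch\'e's theorem on $|\theta|=1$ with the real-variable analysis of $\theta\mapsto\theta-\lambda-(1-\lambda)\tilde f(s-k(\theta))$ on $[\lambda,1]$ and the concavity/sign-of-$1-\rho$ dichotomy at $s=0$. Your write-up correctly supplies the details (strict domination on the contour, monotonicity of $c(s)$ in $s$, and the exclusion of $c(0+)=1$ when $\rho>1$ via $\phi_s>\psi$), so nothing further is needed.
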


A detailed proof of an analogous proposition for semi-continuous
random walks can be found in the monograph of Spitzer \cite{Sp1}.
The reasoning in that proof can  be applied to the equation (\ref{dmg1B5})
as well (see also Lemma 1 \cite{2Ka15}).

Let $ X_{0}=\{0,x\},$
$x\in\mathbb{R}_{+},$ $k\in\mathbb{Z}^{+}.$ Define
$$
   \tau_{k}(x)=\inf\{t:D_{x}(t)<-k\},\qquad
   T_{k}(x)=-D_{x}(\tau_{k}(x))-k,
   \qquad \inf\{\emptyset\}=\infty,
$$
i.e. the  first  overshoot  time of the negative level $-k $ by
the process  $\{D_{x}(t)\}_{t\ge 0}.$  We  will  use the convention
that on  the event $ \{\tau_{k}(x)=\infty\} $  $ T_{k}(x)=\infty.$
Denote $\mathfrak{B}_{k}(x)=\{\tau_{k}(x)<\infty\},$
$$
   f_{k}(x,m,s)=
   \bold E\left[e^{-s\tau_{k}(x)};
   T_{k}(x)=m,\mathfrak{B}_{k}(x) \right],\quad m\in\mathbb{N}.
$$
The Laplace transforms of the joint distribution of the lower
one-boundary functionals are  determined  by means of the following
lemma.

\begin{lemma}[\cite{2Ka13}]  \label{ldmg1B2}       

Let $ \{D_{x}(t)\}_{t\ge0}$ be  the difference of the  compound Poisson process
and the compound renewal process, $\delta\sim ge(\lambda).$ Then
\begin{itemize}
\item[{\rm(i)}]
the  Laplace transform of the  joint distribution of
$\{\tau_{k}(x),\,T_{k}(x)\},$ $k\in\mathbb{Z}^{+},$ $x\ge0$
satisfies  the following equality
for $s>0,$ $m\in\mathbb{N}$
\begin{align}                                   \label{dmg1B6}
   f_{k}(x,m,s)=
   \tilde f_x(s-k(c(s)))\,c(s)^{k}\,
   (1-\lambda)\lambda^{m-1},
\end{align}
where   $c(s)\in(\lambda,1)$ is the  unique solution of  the
equation (\ref{dmg1B5}) inside the circle  $|\theta|<1, $
$ \tilde f_x(s)=\bold E\,e^{-s\eta_x},$
$ \tilde f(s)= \bold E\,e^{-s\eta}=\tilde f_0(s);$
\item[{\rm(ii)}]
if  $\rho>1,$ then
$
   \bold P[\tau_{k}(x)<\infty]= \tilde f_x(-k(c))\,c^k<1,
$
and $\tau_{k}(x) $ for  all
$k\in\mathbb{Z}^{+},$ $x\ge0$ is  a defective  random variable;
if  $\rho\le 1,$
then $\bold P[\tau_{k}(x)<\infty]=1,$ and  $\tau_{k}(x)$   is  a
proper  variable for all $k\in\mathbb{Z}^{+},$ $x\ge0$.
\end{itemize}
\end{lemma}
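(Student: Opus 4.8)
\emph{Proof proposal.} The plan is to exploit the one-sided structure of $\{D_x(t)\}_{t\ge0}$ together with the lack of memory of the geometric jumps, and to reduce the computation to a functional equation obtained by conditioning on the first renewal epoch. I would start with a structural remark: on each interval $[\eta_{n-1}(x),\eta_n(x))$ the renewal counter $N_x(\cdot)$ is constant, so there $D_x$ equals $\pi$ minus a constant and is non-decreasing, and the only downward moves of $\{D_x(t)\}$ are the jumps $-\delta_n'$ at the instants $\eta_n(x)$. Hence the running infimum of $D_x$ is attained immediately after some renewal epoch; in particular, on $\mathfrak B_k(x)$ the time $\tau_k(x)$ is a.s.\ one of the $\eta_n(x)$, and since $D_x(\tau_k(x))\le-k-1$ we get $T_k(x)\in\mathbb N$, in agreement with the statement.

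Next I would condition on the first renewal epoch $\eta_x$: before it $D_x(t)=\pi(t)\ge0$, so no exit occurs; at $\eta_x$ the first component moves to $\pi(\eta_x)-\delta_1'$. Either $\delta_1'-\pi(\eta_x)-k=m\ge1$, and then $\tau_k(x)=\eta_x$ and $T_k(x)=m$; or $j:=\pi(\eta_x)-\delta_1'\ge-k$, and then, using that $\{X_t\}$ is homogeneous in its first component and that the linear component is reset to $0$ at $\eta_x$, the process continues after $\eta_x$ as an independent fresh copy started at $\{0,0\}$ and translated by $j$, whence $\tau_k(x)=\eta_x+\tilde\tau_{k+j}(0)$ and $T_k(x)=\tilde T_{k+j}(0)$. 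This gives, for $s>0$ and $m\in\mathbb N$,
\begin{align*}
  f_k(x,m,s)=\bold E\bigl[e^{-s\eta_x};\,\delta_1'-\pi(\eta_x)-k=m\bigr]
  +\sum_{j\ge-k}\bold E\bigl[e^{-s\eta_x};\,\pi(\eta_x)-\delta_1'=j\bigr]\,f_{k+j}(0,m,s).
\end{align*}
The probabilistic quantity $f_k(x,m,s)$ solves this equation, and it is the only bounded solution: the homogeneous version of the equation, restricted to $x=0$, is strictly contractive because the continuation term carries a factor $\tilde f(s)=\bold Ee^{-s\eta}<1$ for $s>0$, so its only bounded solution is $0$, and then so is the solution for general $x$.

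It then remains to verify that the right-hand side of (\ref{dmg1B6}) solves the equation, and this is where the structure concentrates. Three ingredients are used: (a) conditioning on $\eta_x$ and using $\bold E\,\theta^{\pi(u)}=e^{uk(\theta)}$ gives $\bold E[e^{-s\eta_x}\theta^{\pi(\eta_x)}]=\tilde f_x(s-k(\theta))$ for $|\theta|\le1$; (b) by Lemma \ref{ldmg1B1}, $c(s)\in(\lambda,1)$, so the geometric series $\bold E\bigl[c(s)^{-\delta}\bigr]=\tfrac{1-\lambda}{\,c(s)-\lambda\,}$ converges; and (c) equation (\ref{dmg1B5}) rewrites exactly as $\tilde f(s-k(c(s)))=\tfrac{c(s)-\lambda}{1-\lambda}$, which evaluates the terms $f_{k+j}(0,m,s)$ in the sum. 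Substituting the candidate $f_{k+j}(0,m,s)=(1-\lambda)\lambda^{m-1}\tilde f(s-k(c(s)))\,c(s)^{k+j}$, writing $c(s)^{k+j}=c(s)^k\,c(s)^{\pi(\eta_x)}\,c(s)^{-\delta_1'}$, and combining the sum with the first term via (a)--(c) collapses everything to $(1-\lambda)\lambda^{m-1}\tilde f_x(s-k(c(s)))\,c(s)^k$, which is (\ref{dmg1B6}); the factor $(1-\lambda)\lambda^{m-1}$ appears precisely because $\bold P[\delta_1'=\pi(\eta_x)+k+m]=(1-\lambda)\lambda^{m-1}\lambda^{\pi(\eta_x)+k}$, i.e.\ from the lack of memory of $\delta$. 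This proves (i).

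For part (ii), summing (\ref{dmg1B6}) over $m\in\mathbb N$ gives $\bold E[e^{-s\tau_k(x)};\mathfrak B_k(x)]=\tilde f_x(s-k(c(s)))\,c(s)^k$; letting $s\downarrow0$ and using monotone convergence ($e^{-s\tau_k(x)}\uparrow\mathbf 1_{\mathfrak B_k(x)}$) together with Lemma \ref{ldmg1B1} yields $\bold P[\tau_k(x)<\infty]=\tilde f_x(-k(c))\,c^k$ when $\rho>1$, where $c\in(\lambda,1)$ so that $k(c)<0$ and $\tilde f_x(-k(c))<1$, making $\tau_k(x)$ defective, and $\bold P[\tau_k(x)<\infty]=1$ when $\rho\le1$, where $c(s)\to1$ and $k(c(s))\to0$. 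I expect the main obstacle to be the algebraic verification (a)--(c): one has to notice that the defining relation for $c(s)$ is exactly what closes the recursion, and then carry through the bookkeeping of the geometric sums, which converge only because $c(s)>\lambda$. The structural reduction and the uniqueness argument are comparatively routine.
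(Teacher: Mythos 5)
Your proposal is correct. Note that this paper does not actually prove Lemma \ref{ldmg1B2}: it is quoted from \cite{2Ka13}, so there is no in-text proof to compare against; but your route is exactly the one the paper's methodology points to (the remark that homogeneity in the first component and the reset of the linear component at renewal epochs "will be used constantly when setting up the equations"). Your structural observations are right: downcrossings can only occur at the epochs $\eta_n(x)$, so $T_k(x)\in\mathbb N$; conditioning on the first renewal gives the stated system; boundedness plus the factor $\tilde f(s)<1$ for $s>0$ gives uniqueness at $x=0$ and hence for all $x$. I checked the verification step you compress into "(a)--(c) collapses everything": with $c=c(s)$, $\gamma=s-k(c)$, the continuation sum equals $(1-\lambda)\lambda^{m-1}\tilde f(\gamma)c^{k}\bold E\bigl[e^{-s\eta_x}c^{\pi(\eta_x)-\delta_1'};\pi(\eta_x)-\delta_1'\ge-k\bigr]$, and splitting off the complementary event $\{\delta_1'>\pi(\eta_x)+k\}$ produces, after the geometric summation (valid since $c>\lambda$) and the identity $\tilde f(\gamma)=(c-\lambda)/(1-\lambda)$ from (\ref{dmg1B5}), the two terms $(1-\lambda)\lambda^{m-1}c^{k}\tilde f_x(\gamma)-(1-\lambda)\lambda^{m-1}\lambda^{k}\tilde f_x(s-k(\lambda))$; the second exactly cancels the direct-exit term $(1-\lambda)\lambda^{k+m-1}\tilde f_x(s-k(\lambda))$, leaving (\ref{dmg1B6}). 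Spelling out that cancellation is the only bookkeeping missing from your sketch. Part (ii) by monotone convergence as $s\downarrow0$ together with Lemma \ref{ldmg1B1} (and $-k(c)>0$, $\eta_x>0$ a.s., so $\tilde f_x(-k(c))c^{k}<1$ even for $k=0$) is also fine.
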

Observe that for all $k\in\mathbb{Z} $ the value of the overshoot
$ T_{k}(x) $  does not depend on  $\tau_{k}(x) $ and it is geometrically
distributed $ T_{k}(x)\sim ge(\lambda). $

We  now introduce  a sequence   which  will be used  to obtain  the
results  in the  sequel. The idea to employ this sequence for
semi-continuous random walks and semi-continuous L\'evy  processes
is due to Tak\'{a}cs \cite{Tac1}. Since the function
$$
  \tilde f_x(s-k(\theta))
  =\bold E\left[e^{-s\eta_x}\theta^{\pi(\eta_x)}\right]
  = \sum_{i\in\mathbb{Z}^{+}}\theta^{i}\int_0^{\infty}e^{-st}
  \bold P[\eta_x\in dt,\,\pi(t)=i],\quad |\theta|\le1,
$$
is analytic inside the  unit circle for all
$ s,x\ge0 $, then  the function
\begin{align}                                 \label{dmg1B7}
   \mathbb Q_{\theta}^{s}(x)
   = \frac{(1-\lambda)\tilde f_x(s-k(\theta))}
   {(1-\lambda)\tilde f(s-k(\theta))
   +\lambda-\theta},
   \qquad s,x\ge0,\quad |\theta|<c(s)
\end{align}
is analytic on the open set   $|\theta|<c(s).$ In this region it
can be represented as a powers series
$$
  \mathbb Q_{\theta}^{s}(x)=
  \sum_{k\in\mathbb{Z}^{+}}\theta^{k}Q_{k}^{s}(x),
  \qquad s,x\ge0,\quad |\theta|<c(s).
$$
The coefficients of this expansion can be calculated
by means of the inversion formula
\begin{align}                                       \label{dmg1B8}
   Q_{k}^{s}(x)
   =\frac{1}{2\pi i}\oint_{|\theta|
   =\alpha}\,\frac1{\theta^{k+1}}\,
   \frac{(1-\lambda)\tilde f_x(s-k(\theta))}{(1-\lambda)
   \tilde f(s-k(\theta))+\lambda-\theta}\,d\theta,
   \quad \alpha\in(0,c(s)).
\end{align}
We will call the sequence  $\{Q_{k}^{s}(x)\}_{k\in\mathbb{Z}^{+}},$
$x\ge0,$  defined by the formula (\ref{dmg1B8})  the resolvent sequence of
the process $\{D_{x}(t)\}_{t\ge0}.$

We now  explain a   probabilistic  meaning   of this sequence.
Introduce a random sequence as follows:  (see \cite{3Ka1})
\begin{align*}
    X_{0}(x)=0,\quad  X_{1}(x)=\pi(\eta_{x})-\delta,\quad
    X_{n+1}(x)=X_{1}(x)+\sum_{i=1}^{n}X_{i}^{\prime},
    \quad X_{n}=X_{n}(0),
\end{align*}
where $ X=\pi(\eta)-\delta\in\mathbb{Z},$ $ \{X,X_{n}^{\prime}\},$
$ n\in\mathbb{N} $ is a sequence of i.i.d. random variables. We now define a
right-continuous step process in the following way:
\begin{align*}
   \{S_{x}(t)\}_{t\ge0}=
   \left\{X_{N_{x}(t)}(x) \right\}_{t\ge0}
   \in \mathbb{Z}, \qquad  S_{x}(0)=0,
   \quad x\in\mathbb{R}_{+}.
\end{align*}
The sample paths of the process are constant on the time intervals
$ [\eta_{n}(x),\eta_{n+1}(x)),$ $n\in\mathbb{Z}^{+}$ and there occur jumps at
the instants $\eta_{n}(x),$ $n\in\mathbb{N}.$ These jumps have the same
distribution as $X\doteq\pi(\eta)-\delta,$ where $ n\in\{2,3, \dots \},$ and
$X_{1}(x)\doteq \pi(\eta_{x})-\delta$ for $n=1.$ Here and in the sequel we
will call the process  $\{S_{x}(t)\}_{t\ge0}$ a semi-Markov random walk
generated by the sequences $\{\eta_{n}(x)\},$ $\{X_{n}(x)\},$
$n\in\mathbb{Z}^{+}.$ Let
$
   S^{+}_{t}=\sup_{u\le t} S_{0}(u)
$
be the supremum $\{S_{0}(t)\}_{t\ge0}.$ The  generating  function of
$S^{+}_{t}$  was found in \cite{3Ka1}:
\begin{align*}
   \bold E\theta^{S_{\nu_s}^{+}}=
   \frac{1-\lambda}{1-c(s)}\,
   \frac{(1-\tilde f(s))(\theta-c(s))}
   {\theta-\lambda-(1-\lambda)
   \tilde f(s-k(\theta)},\qquad |\theta|\le1,
\end{align*}
where $ \nu_{s} $ is an exponential variable with parameter $ s>0, $
independent from  the process
$\{S_{x}(t)\}_{t\ge0}$. It follows from  (\ref{dmg1B7}) and from the
latter formula    that for  $ |\theta|<c(s) $
\begin{align*}
   \mathbb Q_{\theta}^{s}(x)=\frac{1-\tilde f(s)}{1-c(s)}\,
   \frac{\tilde f_x(s-k(\theta))}{c(s)-\theta}\,\bold E\theta^{S_{\nu_s}^{+}},
   \qquad |\theta|<c(s).
\end{align*}
Comparing  the coefficients  of $ \theta^{k}, $ $ k\in\mathbb{Z}^{+}$ in both
sides yields
\begin{align*}
   Q_{k}^{s}(x)=\frac{1-c(s)}{1-\tilde f(s)}\,
   \sum_{i=0}^{k}c(s)^{i-k-1}\sum_{j=0}^{i}\bold E\left[e^{-s\eta_{x}},\pi(\eta_x)=j\right]
   \bold P[S_{\nu_s}^{+}=i-j].
\end{align*}
Denote by $ \pi^{s}(\eta_x)\in\mathbb{Z}^{+}, $ $s>0 $  a random variable
given by  its distribution:
$$
    \bold P\left[\pi^{s}(\eta_x)=k\right]=\frac{1}{\tilde f_x(s)}
    \bold E\left[e^{-s\eta_{x}},\pi(\eta_x)=k\right],\qquad
    k\in\mathbb{Z}^{+}.
$$
Then the previous equality implies that
\begin{align*}
   Q_{k}^{s}(x)=\frac{\tilde f_x(s)}{1-\tilde f(s)}\,\frac{1-c(s)}
   {c(s)^{k+1}}\,
   \sum_{i=0}^{k}c(s)^{i} \bold P[\pi^{s}(\eta_x)+S_{\nu_s}^{+}=i],
  \qquad k\in\mathbb{Z}^{+}.
\end{align*}
which explains the probabilistic  meaning of the resolvent  sequence.
Asymptotically,  one has that $ Q_{k}^{s}(x)\sim c(s)^{-k} $ as $ k\to\infty.$

Let $ X_{0}=\{0,x\},$  $x\ge0,$ $k\in\mathbb{Z}^{+} $ and introduce
upper  one-boundary functionals of the process $ \{X_{t}\}_{t\ge0}:$
\begin{align*}
   \tau^{k}(x)=\inf\{t:\,D_{x}(t)>k\},\quad
   T^{k}(x)=D_{x}(\tau^{k}(x))-k,\quad\eta^{k}(x)
   =\eta_{x}^{\,+}(\tau^{k}(x))
\end{align*}
i.e.  the instant of the  first  crossing of   the level $k$ by the
process $\{D_{x}(t)\}_{t\ge 0},$ the value  of the overshoot across
the upper level   and the value of the linear component
$\eta_{x}^{+}(\cdot)$  at the instant of the first crossing
(the time since the last renewal).
Denote $\mathfrak{B}^{k}(x)=\{\tau^{k}(x)<\infty\},$
$$
   f^{k}(x,dl,m,s)=
   \bold E\left[e^{-s\tau^{k}(x)};\eta^{k}(x)\in dl,
   T^{k}(x)=m,\mathfrak{B}^{k}(x) \right],\quad m\in\mathbb{N}.
$$
We now determine the upper one-boundary functionals of the process
$\{D_{x}(t)\}_{t\ge0}.$ Let  $ k\in\mathbb{Z}^{+}  $  and
$
  \tilde\tau^k=\inf\{t:\,\pi(t)>k\},$
  $ \tilde T^k=\pi(\tilde\tau^k)-k
$
be the  first  crossing  time through the upper level $k$ by the
compound Poisson process $\{\pi(t)\}_{t\ge0}$ and the value of the overshoot
at this instant. Denote by
\begin{align*}
&  \rho_k(t)=\bold P[\pi(t)=k],\quad
   \sum_{k=0}^{\infty}\theta^{k}\rho_k(t)=
   \bold E\,\theta^{\pi(t)}=e^{tk(\theta)},\quad |\theta|\le1,\\
&  p_{k}^{m}(dt)=\bold P[\tilde\tau^k\in dt,\,\tilde T^k=m]=
   \mu\sum_{i=0}^{k}\rho_i(t)\bold P[\varkappa=k-i+m]\,dt,
   \quad m\in\mathbb{N}.
\end{align*}

\begin{lemma} [\cite{2Ka13}]                \label{ldmg1B3}  

Let $ \{D_{x}(t)\}_{t\ge0}$ be the  difference of the compound Poisson
process and the compound renewal process, $\delta\sim ge(\lambda),$
$x\ge0,$ $ k\in\mathbb{Z}^{+} $ and
$\{Q_{k}^{s}(x)\}_{k\in\mathbb{Z}^{+}},$  be the  resolvent
sequence of the process  $\{D_{x}(t)\}_{t\ge 0},$  given by (\ref{dmg1B8}).
\begin{itemize}
\item[{\rm(i)}]
the Laplace transforms of the joint distribution of
$ \{\tau^{k}(x), \eta^{k}(x), T^{k}(x)\} $
satisfy the  following equality
\begin{align}                                         \label{dmg1B9}
&  f^{k}(x,dl,m,s)
   = e^{-s(l-x)}\,\frac{1-F(l)}{1-F(x)}\bold I\{l>x\}
   p_{k}^{m}(d(l-x))\notag\\
&  + \Phi^{s}_{\lambda}(0,dl,m)\,Q_{k}^{s}(x)
   -e^{-sl}\,[1-F(l)]\sum_{i=0}^{k}Q_{i}^{s}(x)\,
   p_{k-i}^{m}(dl),
\end{align}
where
$ \Phi^{s}_{\lambda}(0,dl,m)=
e^{-sl}[1-F(l)]\sum_{k=0}^{\infty}c(s)^{k}p_{k}^{m}(dl);$
\item[{\rm(ii)}]
  the Laplace transform  of the first crossing time  through
the upper level  $k$ by the process
$\{D_{x}(t)\}_{t\ge 0}$ are such that
for all $ k\in\mathbb{Z}^{+},$ $ s,x\ge0 $
\begin{align}                                            \label{dmg1B10}
   f^{k}(x,s)= \bold E e^{-s\tau^{k}(x)}=
   1-\frac{s}{s-k(c(s))}\frac{Q_{k}^{s}(x)}{1-\lambda}- A_{x}^{k}(s),
\end{align}
where
$
   A_{x}^{k}(s)= \sum_{i=0}^{k}\limits\tilde\rho_i(s)
   \left[1-Q_{k-i}^{s}(x)(1-\lambda)^{-1}\right],$
   $\tilde\rho_k(s) =s\int_{0}^{\infty}e^{-st}\rho_{k}(t)\,dt;
$
\item[{\rm(iii)}]
for  $\bold E[\varkappa],\,\bold E[\eta]<\infty$ and  $\rho<1,$
$ \tau^{k}(x) $ is  a defective random variable and
$$
   \bold P[\tau^{k}(x)<\infty]=1-
   (1-\rho)(1-\lambda)^{-1}Q_{k}(x) <1,\qquad
   k\in\mathbb{Z}^{+} \quad x\ge0,
$$
where  $\{Q_{k}(x)\}_{k\in\mathbb{Z}^{+}},$ $x\ge 0 $ is the
resolvent sequence of the process
$\{D_{x}(t)\}_{t\ge 0},$ given by
(\ref{dmg1B8})  for $s=0:$
\begin{align}                                         \label{dmg1B11}
   Q_{k}(x)=\frac{1}{2\pi i}\oint_{|\theta|=\alpha}
   \frac{d\theta}{\theta^{k+1}}\, \frac{(1-\lambda)
   \tilde f_{x}(-k(\theta))}{(1-\lambda)
   \tilde f(-k(\theta))+\lambda-\theta},
   \quad \alpha\in(0,c(0));
\end{align}
if $\rho\ge1,$ then for all $ k\in\mathbb{Z}^{+},$ $ x\ge0 $
$\tau^{k}(x)$ is a  proper  random variable.
\end{itemize}
\end{lemma}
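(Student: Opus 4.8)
The plan is to reduce the lemma to a single renewal-type equation obtained by decomposing the trajectory at the first negative jump, to solve that equation by the Tak\'acs generating-function device (which makes the resolvent sequence $\{Q_{k}^{s}(x)\}$ appear automatically), and finally to pass to the limit $s\downarrow0$ for part (iii). \emph{Step 1: the governing equation.} Fix $x\ge0$, $k\in\mathbb Z^{+}$, $s>0$, $m\in\mathbb N$. On $[0,\eta_x)$ the process $D_x(\cdot)$ coincides with the compound Poisson process $\pi(\cdot)$ and the linear component equals $t+x$; at $\eta_x$ a geometric negative jump of size $\delta$ occurs, the linear component is reset to $0$, and by homogeneity of $\{X_t\}$ in the first component the process restarts from the level $\pi(\eta_x)-\delta$. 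Splitting $\mathfrak{B}^{k}(x)$ according to whether $\pi$ up-crosses $k$ before or after $\eta_x$ (a.s.\ $\pi$ and the renewal epochs have no common jump, so $\{\pi(\eta_x)=j\}$ with $j\le k$ is exactly the event that the crossing occurs after $\eta_x$) yields
\begin{align*}
   f^{k}(x,dl,m,s)
   &=e^{-s(l-x)}\,\frac{1-F(l)}{1-F(x)}\,\bold I\{l>x\}\,p_{k}^{m}(d(l-x))\\
   &\quad+\sum_{j=0}^{k}\bold E\big[e^{-s\eta_x};\pi(\eta_x)=j\big]
     \sum_{n\ge1}(1-\lambda)\lambda^{n-1}\,f^{k-j+n}(0,dl,m,s),
\end{align*}
all terms on the right being well defined since $k-j+n\ge1$.

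\emph{Step 2: solving it (part (i)).} Put $\widehat f_\theta(x)=\sum_{k\ge0}\theta^{k}f^{k}(x,dl,m,s)$ and $A_\theta(x)=e^{-s(l-x)}\frac{1-F(l)}{1-F(x)}\bold I\{l>x\}\sum_{k}\theta^{k}p_{k}^{m}(d(l-x))$. Multiplying the equation of Step 1 by $\theta^{k}$ and summing, using $\sum_{i}\theta^{i}\bold E[e^{-s\eta_x};\pi(\eta_x)=i]=\tilde f_x(s-k(\theta))$ and the rearrangement $\sum_{n\ge1}(1-\lambda)\lambda^{n-1}\theta^{-n}\sum_{q\ge n}\theta^{q}f^{q}(0)=\frac{1-\lambda}{\theta-\lambda}\big(\widehat f_\theta(0)-\widehat f_\lambda(0)\big)$, one gets $\widehat f_\theta(x)=A_\theta(x)+\tilde f_x(s-k(\theta))\frac{1-\lambda}{\theta-\lambda}\big(\widehat f_\theta(0)-\widehat f_\lambda(0)\big)$. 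Setting $x=0$ and solving gives
\begin{align*}
   \widehat f_\theta(0)\,\big[\theta-\lambda-(1-\lambda)\tilde f(s-k(\theta))\big]
   =(\theta-\lambda)A_\theta(0)-(1-\lambda)\tilde f(s-k(\theta))\,\widehat f_\lambda(0).
\end{align*}
The left side vanishes at $\theta=c(s)\in(\lambda,1)$ by Lemma~\ref{ldmg1B1}, and $\widehat f_{c(s)}(0)$ is finite because $\sum_k c(s)^{k}f^{k}(0,s)\le\sum_k c(s)^{k}<\infty$; hence the right side vanishes too, and replacing $(1-\lambda)\tilde f(s-k(c(s)))$ by $c(s)-\lambda$ via (\ref{dmg1B5}) and cancelling yields $\widehat f_\lambda(0)=A_{c(s)}(0)=\Phi^{s}_{\lambda}(0,dl,m)$. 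Substituting back and recognizing $\mathbb Q_{\theta}^{s}(x)=\frac{(1-\lambda)\tilde f_x(s-k(\theta))}{(1-\lambda)\tilde f(s-k(\theta))+\lambda-\theta}$, the answer collapses to $\widehat f_\theta(x)=A_\theta(x)+\mathbb Q_{\theta}^{s}(x)\Phi^{s}_{\lambda}(0,dl,m)-\mathbb Q_{\theta}^{s}(x)A_\theta(0)$; reading off the coefficient of $\theta^{k}$ (recall $\mathbb Q_{\theta}^{s}(x)=\sum_k\theta^{k}Q_{k}^{s}(x)$ from (\ref{dmg1B7})--(\ref{dmg1B8})) is precisely (\ref{dmg1B9}).

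\emph{Step 3: parts (ii) and (iii).} Summing (\ref{dmg1B9}) over $m$ and integrating over $l$, and using $\sum_{m\ge1}p_{k}^{m}=\bold P[\tilde\tau^{k}\in\cdot]$, $\sum_{i}\tilde\rho_i(s)\theta^{i}=\bold E\theta^{\pi(\nu_s)}=\frac{s}{s-k(\theta)}$, $\sum_{m\ge1}\sum_{j\ge0}\theta^{j}\bold P[\varkappa=j+m]=\frac{-k(\theta)}{\mu(1-\theta)}$, $\int_0^\infty e^{-s'u}(1-F_x(u))\,du=\frac{1-\tilde f_x(s')}{s'}$ and (\ref{dmg1B5}) at $\theta=c(s)$, after cancellations $\sum_k\theta^{k}f^{k}(x,s)$ reduces to
\begin{align*}
   \frac{-k(\theta)}{(1-\theta)(s-k(\theta))}
   +\frac{\mathbb Q_{\theta}^{s}(x)}{1-\lambda}\Big(\frac{s}{s-k(\theta)}-\frac{s}{s-k(c(s))}\Big),
\end{align*}
whose coefficient of $\theta^{k}$ is (\ref{dmg1B10}). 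For (iii) let $s\downarrow0$ in (\ref{dmg1B10}): the left side increases to $\bold P[\tau^{k}(x)<\infty]$, $Q_{k}^{s}(x)\to Q_{k}(x)$, and $\tilde\rho_i(s)=s\int_0^\infty e^{-st}\rho_i(t)\,dt\to0$ since the expected occupation time $\int_0^\infty\rho_i(t)\,dt$ of level $i$ by the non-decreasing process $\pi$ is finite, so $A^{k}_x(s)\to0$. Implicit differentiation of (\ref{dmg1B5}) gives $\frac{d}{ds}\big(s-k(c(s))\big)\big|_{s=0}=(1-\rho)^{-1}$, hence for $\rho<1$, $\frac{s}{s-k(c(s))}\to1-\rho$ and $\bold P[\tau^{k}(x)<\infty]=1-(1-\rho)(1-\lambda)^{-1}Q_{k}(x)<1$; for $\rho>1$ one has $c(0)\in(\lambda,1)$ so $s-k(c(s))\to-k(c(0))>0$, and for $\rho=1$ the same quotient tends to $0$, so in both cases the right side tends to $1$.

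\emph{Main obstacle.} The one genuinely analytic step is the Wiener--Hopf argument: one must know that $\widehat f_\theta(0)$ is regular up to the root $\theta=c(s)$ of the symbol, so that forcing the numerator to vanish there is legitimate and pins down the unknown boundary generating function $\widehat f_\lambda(0)$ as $\Phi^{s}_{\lambda}(0,dl,m)$. The path decomposition of Step 1 and the algebraic reductions of Steps 2--3 are routine bookkeeping; the only further care is the borderline case $\rho=1$ in (iii), where $c(s)\to1$ and the quotient $s/(s-k(c(s)))$ has to be shown to vanish.
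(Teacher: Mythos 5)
The paper does not prove Lemma \ref{ldmg1B3} at all: it is imported verbatim from \cite{2Ka13}, so there is no in-paper argument to compare against line by line. Your proof is correct, and it is exactly the scheme this paper uses for the statements it does prove (Lemma \ref{ldmg1B4}, Theorems \ref{tdmg1B2}--\ref{tdmg1B5}): condition at the first renewal epoch $\eta_x$ using homogeneity in the first component, pass to generating functions in the level $k$, use the unique root $c(s)$ of (\ref{dmg1B5}) inside the unit disk to determine the unknown boundary generating function (your finiteness bound $\sum_k c(s)^k f^k(0,s)<\infty$ is the right justification for that step), and read off coefficients via the resolvent (\ref{dmg1B7})--(\ref{dmg1B8}); parts (ii)--(iii) then follow by summation over $m,l$ and by letting $s\downarrow0$, and your computation $\frac{d}{ds}\bigl(s-k(c(s))\bigr)\big|_{s=0}=(1-\rho)^{-1}$ is what produces the factor $1-\rho$. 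The one point you assert without proof, namely $s/(s-k(c(s)))\to0$ when $\rho=1$, does hold and needs no second-moment hypothesis: writing $\psi(s)=s-k(c(s))$, equation (\ref{dmg1B5}) gives $1-c(s)\sim(1-\lambda)\bold E\eta\,\psi(s)$ while $1-\bold Ec(s)^{\varkappa}\sim\bold E\varkappa\,(1-c(s))$, so $\psi(s)=s+\mu\bigl(1-\bold Ec(s)^{\varkappa}\bigr)=s+\rho\,\psi(s)(1+o(1))$, which for $\rho=1$ forces $\psi(s)/s\to\infty$; together with the strict positivity of $Q_{k}(x)$ (clear from its probabilistic representation in Section 1), this closes the only small gaps in your argument.
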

Along with expression (\ref{dmg1B11}) there exists another  way to  calculate
$ Q_{k}(x),$ which is more applicable from practical point of view.  We will
now derive  the recurrent  formula  for $ Q_{k}(x). $
It follows  from   (\ref{dmg1B7}) for  $   s, \theta=0 $ that
\begin{align*}
    Q_{0}(x)=
    (1-\lambda)
    (\lambda+(1-\lambda)f_{0})^{-1} f_{0}(x),
\end{align*}
where  for all  $ k\in\mathbb{Z}^{+}$
$$
  f_{k}(x)
  =\bold P\left[\pi(\eta_x)=k\right]
  = \int_0^{\infty}
  \bold P[\eta_x\in dt,\,\pi(t)=k],\quad f_{k}=f_{k}(0).
$$
Again, it follows from  (\ref{dmg1B7})  for $ s=0 $  that
\begin{align*}
   (1-\lambda)\tilde f_x(-k(\theta))=
   (1-\lambda)\tilde f(-k(\theta))\mathbb Q_{\theta}(x)+
   (\lambda-\theta)\mathbb Q_{\theta}(x).
\end{align*}
Comparing the coefficients  of  $ \theta^{k}, $  $ k\in\mathbb{N} $
in both sides  implies that
\begin{align*}
   (1-\lambda)f_{k}(x)=
   (1-\lambda)\sum_{i=0}^{k}Q_{i}(x)f_{k-i}+
   \lambda Q_{k}(x) - Q_{k-1}(x).
\end{align*}
Combining like terms  yields
\begin{align*}
   \left(\lambda+(1-\lambda)f_{0} \right)Q_{k}(x)=
   (1-\lambda)f_{k}(x)+ Q_{k-1}(x)-
   (1-\lambda)\sum_{i=0}^{k-1} Q_{i}(x)f_{k-i}.
\end{align*}
The  latter  formula is a  recurrent  relation  which allows  to calculate
successively the terms $ Q_{k}(x) $ given the previous terms
$ Q_{0}(x),\dots,Q_{k-1}(x).$ For instance,  given the expression for
$ Q_{0}(x) $  one finds  that
$$
   Q_{1}(x) =\frac{1-\lambda}{\lambda+(1-\lambda)f_{0}}
   \left[f_{1}(x)+\frac{1-(1-\lambda)f_{0}}
   {\lambda+(1-\lambda)f_{0}}\;f_{0}(x)  \right].
$$

The  knowledge of the one-boundary characteristics of the process
allows us to  solve the two-sided problems, which is the  aim
of the   following section.

\section {Two-sided  problems for the     
process  $\{D_{x}(t)\}_{t\ge 0}$ }            

Let  $B\in \mathbb{Z}^{+}$ be fixed, $k\in\overline{0,B},$ $r=B-k,$
$X_{0}=\{0,x\},$ $ x\ge0,$  and introduce  the random variable
$$
  \chi_{r}^{B}(x)=\inf\{t:\,D_{x}(t)\notin[-r,k]\}\stackrel{\rm def}{=} \chi
$$
the first exit time from the interval $ [-r,k] $ by the process
$\{D_{x}(t)\}_{t\ge 0}.$ This random variable takes values from a
countable  set  $ \{\xi_{n},\,n\in\mathbb{N}\}
\cup\{\,\eta_{n}(x),\,n\in\mathbb{N}\},$ and it  is a Markov time
of the process  $\{X_{t}\}_{t\ge 0}$  ($\xi_k$ are the  instants of  jumps of the process $\pi(t).$)  Note,  that the  exit from the interval  can
occur  either    through the  upper   boundary  $k,$ or through the
lower boundary $-r.$   In  view of this remark introduce the events
\\$\mathfrak{A}^{k}=\{D_{x}(\chi)>k\},$ i.e. the process
$\{D_{x}(t)\}_{t\ge 0}$ exits the interval  $[-r,k]$ through the
upper  boundary $k$;
\\$\mathfrak{A}_{r}=\{D_{x}(\chi)<-r\},$ i.e. the process
$\{D_{x}(t)\}_{t\ge 0}$ exits the interval
$[-r,k]$ through the lower boundary $-r.$  Denote by
$$
   T=(D_{x}(\chi)-k)\bold I_{\mathfrak{A}^{k}}+(-D_{x}(\chi)-r)
   \bold I_{\mathfrak{A}_{r}},\quad L=\eta_{x}^{\,+}(\chi)
   \bold I_{\mathfrak{A}^{k}}+0\cdot
   \bold I_{\mathfrak{A}_{r}}, \quad
   \bold P[\mathfrak{A}^{k}+\mathfrak{A}_{r}]=1
$$
the value  of the overshoot   through the boundaries  of the
interval $[-r,k] $   by the process  $\{D_{x}(t)\}_{t\ge 0} $ and
the value of the  linear component  at the instant of the first exit
(the time since the last renewal),
where  $\bold I_{\mathfrak{A}}=\bold I_{\mathfrak{A}}(\omega)$
is the indicator function of the event $\mathfrak{A}.$ Denote
\begin{align*}
  V^{k}(x,dl,m,s)
  =\bold E\left[e^{-s\chi};L\in dl,T=m,\mathfrak{A}^{k}\right],\;
  V_{r}(x,m,s)
  =\bold E\left[e^{-s\chi};T=m,\mathfrak{A}_{r}\right].
\end{align*}

\begin{theorem} [\cite{2Ka13}]   \label{tdmg1B1}     

Let  $\{D_{x}(t)\}_{t\ge 0}$ be the difference of the compound Poisson process
and  the  renewal process (2),
$\delta\sim ge(\lambda),$  $\{Q_{k}^{s}(x)\}_{k\in\mathbb{Z}^{+}},$
be the  resolvent sequence  of the process given by  (\ref{dmg1B8}),
$ Q_{k}^{s}\stackrel{\rm def}{=} Q_{k}^{s}(0).$
Then
\begin{itemize}
\item[{\rm(i)}]
the Laplace transforms of the joint distribution of  $\{\chi,L,T\}$
satisfy   the following equalities  for all
$x,s\ge0,$ $m\in\mathbb{N}$
\begin{align}                                     \label{dmg1B12}
&  V_{r}(x,m,s)=
   \frac{Q_{k}^{s}(x)}{\bold E\,Q_{\delta+B}^{s}}
   (1-\lambda)\lambda^{m-1},\notag\\
&  V^{k}(x,dl,m,s)= f^{k}(x,dl,m,s)-
   \frac{Q_{k}^{s}(x)}
   {\bold E\,Q_{\delta+B}^{s}} \,
   \bold  E f^{\delta+B}(0,dl,m,s),
\end{align}
where  the function $ f^{k}(x,dl,m,s) $ is given by (\ref{dmg1B9}),
\begin{align*}
&  \bold E\,Q_{\delta+B}^{s}=
   \sum_{k\in\mathbb{N}}(1-\lambda)
   \lambda^{k-1} Q_{k+B}^{s},\\
&  \bold E f^{\delta+B}(0,dl,m,s)=
   \sum_{k\in\mathbb{N}}(1-\lambda)
   \lambda^{k-1} f^{k+B}(0,dl,m,s);
\end{align*}
\item[{\rm(ii)}]
for the Laplace transforms of the first exit time $\chi$  from
the interval by  the  process
$\{D_{x}(t)\}_{t\ge 0}$  the
following formulae hold
\begin{align}                                        \label{dmg1B13}
&  \bold E\left[e^{-s\chi};\mathfrak{A}_{r}\right]=
   \frac{Q_{k}^{s}(x)}
   {\bold E\,Q_{\delta+B}^{s}},\\
&  \bold E\left[e^{-s\chi};\mathfrak{A}^{k}\right]=
   1- A^{k}_{x}(s)-
   \frac{Q_{k}^{s}(x)} {\bold E\,Q_{\delta+B}^{s}}
   \left(1- \bold E\,A^{\delta+B}_{0}(s) \right),\notag
\end{align}
where
$
    \bold E\,A^{\delta+B}_{0}(s)= \sum_{k\in\mathbb{N}}\limits(1-\lambda)
   \lambda^{k-1} A_{0}^{k+B}(s);
$
\item[{\rm(iii)}]
the probabilities of the exit from the interval through the
upper and the lower boundary  by the process $\{D_{x}(t)\}_{t\ge 0}$
are given by
$$
   \bold P[\mathfrak{A}_{r}]=
   \frac{Q_{k}(x)}{\bold E\,Q_{\delta+B}},\qquad
   \bold P[\mathfrak{A}^{k}]=
   1-\frac{Q_{k}(x)}{\bold E\,Q_{\delta+B}},
$$
where the resolvent sequence of the process
$\{Q_{k}(x)\}_{k\in\mathbb{Z}^{+}},$ $x\ge0$ is defined by  (\ref{dmg1B11}),
$ Q_{k}\stackrel{\rm def}{=} Q_{k}(0).$
\end{itemize}
\end{theorem}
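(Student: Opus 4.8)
We only outline the argument; full details may be found in \cite{2Ka13}. The plan is to reduce the two-sided functionals $V^{k}(x,dl,m,s)$ and $V_{r}(x,m,s)$ to the one-boundary functionals of Lemmas~\ref{ldmg1B2} and \ref{ldmg1B3} by two first-passage decompositions, exploiting the spatial homogeneity of $\{X_{t}\}$ in the first coordinate. Write $\tau^{k}(x)$ and $\tau_{r}(x)$ for the first passage of $\{D_{x}(t)\}$ above $k$, respectively below $-r$, so that $\chi=\min\{\tau^{k}(x),\tau_{r}(x)\}$, $\mathfrak{A}^{k}=\{\tau^{k}(x)<\tau_{r}(x)\}$ and $\mathfrak{A}_{r}=\{\tau_{r}(x)<\tau^{k}(x)\}$, while $\chi$ coincides with $\tau^{k}(x)$ on $\mathfrak{A}^{k}$ and with $\tau_{r}(x)$ on $\mathfrak{A}_{r}$. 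On $\mathfrak{A}_{r}$ the process leaves $[-r,k]$ by a negative jump, so that at $\tau_{r}(x)$ the linear component is reset to $0$ and, by the observation following Lemma~\ref{ldmg1B2}, the overshoot $T_{r}(x)$ is $ge(\lambda)$-distributed and independent of $\tau_{r}(x)$; by homogeneity the process then restarts from $\{-r-T_{r}(x),0\}$ and must still climb $B+T_{r}(x)$ to cross $k$. Decomposing the upper one-boundary functional $f^{k}(x,dl,m,s)$ according to whether $-r$ is crossed before $k$, applying the strong Markov property at $\tau_{r}(x)$ and averaging over $T_{r}(x)\stackrel{d}{=}\delta$, one gets
\begin{align*}
   f^{k}(x,dl,m,s)=V^{k}(x,dl,m,s)+v(x,s)\,\bold E\,f^{\delta+B}(0,dl,m,s),\qquad
   V_{r}(x,m,s)=v(x,s)(1-\lambda)\lambda^{m-1},
\end{align*}
where $v(x,s):=(1-\lambda)^{-1}V_{r}(x,1,s)$ is a scalar still to be identified; this already yields $(\ref{dmg1B12})$ provided one shows $v(x,s)=Q_{k}^{s}(x)/\bold E\,Q_{\delta+B}^{s}$.

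To pin down $v(x,s)$ I would run the dual decomposition on the lower one-boundary functional $f_{r}(x,m,s)$: on $\mathfrak{A}^{k}$ the process crosses $k$ at $\tau^{k}(x)$ with overshoot $T^{k}(x)$ and linear component $\eta^{k}(x)$, and starting from $\{k+T^{k}(x),\eta^{k}(x)\}$ it must fall $B+T^{k}(x)$ to reach below $-r$; the strong Markov property at $\tau^{k}(x)$ together with Lemma~\ref{ldmg1B2}(i), which gives $f_{n}(l,m,s)=\tilde f_{l}(s-k(c(s)))\,c(s)^{n}(1-\lambda)\lambda^{m-1}$, yields
\begin{align*}
   f_{r}(x,m,s)&=v(x,s)(1-\lambda)\lambda^{m-1}\\
   &\quad+(1-\lambda)\lambda^{m-1}\,c(s)^{B}\sum_{j\ge1}c(s)^{j}\int_{0}^{\infty}V^{k}(x,dl,j,s)\,\tilde f_{l}(s-k(c(s))).
\end{align*}
Inserting the expression for $V^{k}$ from the first relation, using $f_{r}(x,m,s)=\tilde f_{x}(s-k(c(s)))\,c(s)^{r}(1-\lambda)\lambda^{m-1}$ (Lemma~\ref{ldmg1B2}(i) with $k$ replaced by $r$) and cancelling the common factor $(1-\lambda)\lambda^{m-1}$, one is left with a single linear equation for $v(x,s)$ whose coefficients are $\Psi_{k}(x,s):=\sum_{j\ge1}c(s)^{j}\int_{0}^{\infty}f^{k}(x,dl,j,s)\,\tilde f_{l}(s-k(c(s)))$ and $\sum_{n\ge1}(1-\lambda)\lambda^{n-1}\Psi_{n+B}(0,s)$.

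The heaviest step, and the real obstacle, is to evaluate $\Psi_{k}(x,s)$ in closed form and to recognise that the solution of this linear equation is exactly $Q_{k}^{s}(x)/\bold E\,Q_{\delta+B}^{s}$. I would substitute the explicit form $(\ref{dmg1B9})$ of $f^{k}(x,dl,j,s)$, carry out the $j$-summation (the series inside $\Phi_{\lambda}^{s}$ collapses because $\sum_{i}c(s)^{i}\rho_{i}(t)=e^{t\,k(c(s))}$), and interchange the result with the Cauchy integral $(\ref{dmg1B8})$ defining $Q_{k}^{s}(x)$, using that $\theta=c(s)$ is the unique zero of $(1-\lambda)\tilde f(s-k(\theta))+\lambda-\theta$ inside the unit disc (Lemma~\ref{ldmg1B1}). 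Concretely one checks that $\tilde f_{x}(s-k(c(s)))\,c(s)^{-k}-\Psi_{k}(x,s)$ equals $Q_{k}^{s}(x)$ times a factor depending on $s$ only; the quotient form of $v(x,s)$ then drops out once the extra terms produced by the average over $\delta+B$ are collapsed by the defining relation $(\ref{dmg1B5})$, in the guise $c(s)-\lambda=(1-\lambda)\tilde f(s-k(c(s)))$. An equivalent, possibly cleaner, route is to verify directly that the right-hand sides of $(\ref{dmg1B12})$ solve the integro-difference equations obtained by conditioning $\{X_{t}\}$ on its first jump (an exponential jump up of size $\varkappa$ versus a negative jump of size $\delta$ at the instant $\eta_{x}$) with the appropriate boundary entries, and invoke uniqueness.

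Statements (ii) and (iii) then follow from (i). Summing over $m\ge1$ and integrating over $l$, with $\sum_{m\ge1}(1-\lambda)\lambda^{m-1}=1$, gives $\bold E[e^{-s\chi};\mathfrak{A}_{r}]=v(x,s)=Q_{k}^{s}(x)/\bold E\,Q_{\delta+B}^{s}$ and $\bold E[e^{-s\chi};\mathfrak{A}^{k}]=f^{k}(x,s)-v(x,s)\,\bold E\,f^{\delta+B}(0,s)$; substituting $(\ref{dmg1B10})$ for $f^{k}(x,s)$ and its $ge(\lambda)$-average for $\bold E\,f^{\delta+B}(0,s)$, the terms proportional to $s/(s-k(c(s)))$ cancel and $(\ref{dmg1B13})$ results. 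Letting $s\downarrow0$ one has $c(s)\to c(0)$ by Lemma~\ref{ldmg1B1}, the killing terms vanish ($\tilde\rho_{i}(0)=0$, hence $A_{x}^{k}(0)=0$), and $Q_{k}^{s}(x)\to Q_{k}(x)$ of $(\ref{dmg1B11})$; since $\chi<\infty$ almost surely in every case (Lemmas~\ref{ldmg1B2}(ii) and \ref{ldmg1B3}(iii)), this yields $\bold P[\mathfrak{A}_{r}]=Q_{k}(x)/\bold E\,Q_{\delta+B}$ and $\bold P[\mathfrak{A}^{k}]=1-Q_{k}(x)/\bold E\,Q_{\delta+B}$, the distinction $\rho\lessgtr1$ entering only through the value of $c(0)$.
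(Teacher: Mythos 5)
The paper itself states Theorem~\ref{tdmg1B1} without proof, citing \cite{2Ka13}, and your outline reproduces exactly the successive-crossing (linear-system) method of \cite{2Ka13}, \cite{2Ka2}: the two dual first-passage decompositions, the geometric form of the lower overshoot, and the reduction to one scalar equation for $v(x,s)$. The step you defer is in fact correct as stated: summing \eqref{dmg1B9} against $c(s)^{m}\tilde f_{l}(s-k(c(s)))$ and passing to generating functions in $k$ via \eqref{dmg1B7}--\eqref{dmg1B8}, the relation $c(s)-\lambda=(1-\lambda)\tilde f(s-k(c(s)))$ collapses everything to $\tilde f_{x}(s-k(c(s)))c(s)^{-k}-\Psi_{k}(x,s)=G(s)\,Q_{k}^{s}(x)$ with $G$ depending on $s$ only, and the same relation gives $c(s)^{-B}-\bold E\,\Psi_{\delta+B}(0,s)=G(s)\,\bold E\,Q_{\delta+B}^{s}$, so $G(s)$ cancels and $v(x,s)=Q_{k}^{s}(x)/\bold E\,Q_{\delta+B}^{s}$; your derivations of (ii) via \eqref{dmg1B10} and of (iii) via $s\downarrow0$ (using that $\chi<\infty$ a.s.\ by Lemmas~\ref{ldmg1B2}(ii), \ref{ldmg1B3}(iii)) are likewise sound.
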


Denote  by  $ \nu_{s}\sim\exp(s) $ an  exponential random variable   with parameter
$ s>0 $  independent of the  process  $ D_{x}(t).$  For
$k\in\mathbb{Z}^{+}, $ $x\ge0 $   define
$
    D_{x}^{+}(t)=\sup_{[0,t]}\limits D_{x}(\cdot),$ $
    D_{x}^{-}(t)=\inf_{[0,t]}\limits D_{x}(\cdot)
$
the  running   maximum and minimum  of   the process.
Our  aim is to  determine the  joint distribution of
$\{D_{x}^{-}(t),D_{x}(t), D_{x}^{+}(t)\}.$  In order to  do this, we will
require the  joint  distribution  of
$ \{D_{x}(\nu_{s}),D_{x}^{+}(\nu_{s})\}.$

\begin{lemma}  \label{ldmg1B4}
Let  $k\in\mathbb{Z}^{+}$ and
$
    E_{k}^{+}(x,z,s)=\bold E\left[z^{D_{x}(\nu_{s})};
    D_{x}^{+}(\nu_{s})\le k \right],$ $ |z|\ge 1
$
be the generating function  of the joint distribution of
$ \{D_{x}(\nu_{s}),D_{x}^{+}(\nu_{s})\}. $
Then
\begin{itemize}
\item[\rm{(i)}]
the generating function   $ E_{k}^{+}(x,z,s)$
is  such that
\begin{align}                                         \label{dmg1B14}
    E_{k}^{+}(x,z,s)=z^{k} A_{x}^{k}(s)+
    (1-z)\sum_{i=0}^{k-1}z^{i}A_{x}^{i}(s)+
    z^{k}Q_{k}^{s}(x)\mathbb{E}_{\lambda/z}^{s}(0,z),
\end{align}
where
$$
   \mathbb{E}_{\lambda/z}^{s}(0,z)=\frac{s(1-\lambda)^{-1}}{s-k(c(s))}\,
   \frac{1-c(s)}{1-c(s)/z};
$$
\item[\rm{(ii)}]
the joint distribution
$
    \mathfrak{E}_{k}^{+}(x,u,s) =
   \bold P\left[D_{x}(\nu_{s})\le u,
    D_{x}^{+}(\nu_{s})\le k \right],
$
$ u\in\overline{-\infty,k} $
satisfies the following equality
\begin{align}                                        \label{dmg1B15}
    \mathfrak{E}_{k}^{+}(x,u,s) =
    A_{x}^{u}(s)+\frac{s(1-\lambda)^{-1}}{s-k(c(s))}\,c(s)^{k-u}
    Q_{k}^{s}(x),\quad   A_{x}^{u}(s)=0,\;  u<0;
\end{align}
\item[\rm{(iii)}]
under  the   condition  $(A)$
\begin{itemize}
\item[{\rm(A)}]
$  \rho=(1-\lambda)\mu\bold E\eta\bold E \varkappa=1,\quad
   \sigma^{2}=\mu\left[\bold E\varkappa(\varkappa-1)+
   \frac{\bold E\varkappa\bold E\eta^{2}}
   {(1-\lambda)(\bold E\eta)^{2}}\right]<\infty,$
\end{itemize}
the  following limiting equality holds as $ B\to\infty, $ $k>0 $
\begin{align*}
     \bold P[D_{x}(tB^{2})&\le [uB],
     D_{x}^{+}(tB^{2})\le [kB] ]
     \to \frac{1}{\sigma\sqrt{2\pi t}}
     \int_{-u}^{2k-u}e^{-v^{2}/2\sigma^{2}t}dv,
\end{align*}
where  $[a] $ is the integer part of the number $ a,$
$ u \le k.$
\end{itemize}
\end{lemma}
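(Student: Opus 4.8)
The plan is to begin from $\{D_x^{+}(\nu_s)\le k\}=\{\tau^{k}(x)>\nu_s\}$, so that $E_k^{+}(x,z,s)=\bold E[z^{D_x(\nu_s)};\tau^{k}(x)>\nu_s]$ (finite for $|z|\ge1$, since $|z^{D_x(\nu_s)}|\le|z|^{k}$ on this event), and to obtain a linear recursion in $k$ by conditioning on the first ``event'' of the Markov process $\{X_t\}$ issued from $\{0,x\}$: either the $\exp(s)$ killing clock fires first, or the first positive jump of size $\varkappa$ occurs after an $\exp(\mu)$ time, or the first renewal takes place at time $\eta_x$, producing a negative jump $\delta\sim ge(\lambda)$, after which --- by homogeneity in the first coordinate --- the process restarts at a level $j\le k$ with age $0$ and room $k-j$ to the barrier. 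On $\{\nu_s<\eta_x\}$ one has $D_x(\nu_s)=\pi(\nu_s)$, which produces the terms $\tilde\rho_i(s)$; on $\{\nu_s\ge\eta_x\}$ the conditional contribution is $z^{\,j}E_{k-j}^{+}(0,z,s)$. I would then pass to the generating function $\sum_{k\ge0}w^{k}(\cdot)$, use the defining identity (\ref{dmg1B7}) for $\mathbb Q_{\theta}^{s}(x)$ and the formulas of Lemma~\ref{ldmg1B3} for $A_x^{k}(s)$, $Q_k^{s}(x)$ and $f^{k}(x,s)$, solve the resulting functional equation and read off (\ref{dmg1B14}); putting $z=1$ and comparing with $\bold P[D_x^{+}(\nu_s)\le k]=1-f^{k}(x,s)$ from Lemma~\ref{ldmg1B3}(ii) is a useful check.

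\textbf{Part (ii).} This is pure coefficient extraction from (i). Expanding $(1-c(s)/z)^{-1}=\sum_{n\ge0}c(s)^{n}z^{-n}$ (valid since $c(s)<1\le|z|$) and rewriting $z^{k}A_x^{k}(s)+(1-z)\sum_{i=0}^{k-1}z^{i}A_x^{i}(s)=\sum_{i=0}^{k}z^{i}(A_x^{i}(s)-A_x^{i-1}(s))$, one finds that the coefficient of $z^{j}$ in $E_k^{+}(x,z,s)$ equals $(A_x^{j}(s)-A_x^{j-1}(s))\,\bold I\{0\le j\le k\}+Q_k^{s}(x)\,\frac{s(1-\lambda)^{-1}(1-c(s))}{s-k(c(s))}\,c(s)^{k-j}$ for $j\le k$, and $0$ otherwise. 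Summing over $j\le u$ with $u\le k$, the first part telescopes to $A_x^{u}(s)$ (with the convention $A_x^{u}(s)=0$ for $u<0$) and $\sum_{j\le u}c(s)^{k-j}=c(s)^{k-u}/(1-c(s))$, which gives exactly (\ref{dmg1B15}).

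\textbf{Part (iii), reduction.} I would use the extended continuity theorem for Laplace transforms. With $t=B^{2}\tau$,
$$\mathfrak{E}_{[kB]}^{+}(x,[uB],s/B^{2})=s\int_0^{\infty}e^{-s\tau}\,\bold P\bigl[D_x(B^{2}\tau)\le[uB],\,D_x^{+}(B^{2}\tau)\le[kB]\bigr]\,d\tau,$$
and the left-hand side equals, by (\ref{dmg1B15}), $A_x^{[uB]}(s/B^{2})+\frac{(s/B^{2})(1-\lambda)^{-1}}{s/B^{2}-k(c(s/B^{2}))}\,c(s/B^{2})^{[kB]-[uB]}\,Q_{[kB]}^{s/B^{2}}(x)$. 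Hence it suffices to show this converges, as $B\to\infty$, to $s\int_0^{\infty}e^{-s\tau}\bigl(\frac{1}{\sigma\sqrt{2\pi\tau}}\int_{-u}^{2k-u}e^{-v^{2}/2\sigma^{2}\tau}\,dv\bigr)d\tau$; monotonicity in $u$ of the prelimit distribution functions together with continuity of the limiting one then upgrades this to convergence at every point. By the classical identity $\int_0^{\infty}e^{-s\tau}(2\pi\sigma^{2}\tau)^{-1/2}e^{-v^{2}/2\sigma^{2}\tau}\,d\tau=(2\sigma^{2}s)^{-1/2}e^{-|v|q}$, $q:=\sqrt{2s}/\sigma$, the target transform equals $\tfrac12(e^{qu}-e^{q(u-2k)})$ for $u\le0$ and $1-\tfrac12 e^{-qu}-\tfrac12 e^{q(u-2k)}$ for $0<u\le k$.

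\textbf{Part (iii), analytic core and main obstacle.} Everything reduces to the asymptotics of the ingredients of (\ref{dmg1B15}) in the diffusive regime $s\to0$, $k=[kB]\to\infty$. Since $\rho=1$, the map $\theta\mapsto\theta-\lambda-(1-\lambda)\tilde f(-k(\theta))$ has a \emph{double} zero at $\theta=1$ (its derivative there is $1-\rho=0$), so a second-order expansion of (\ref{dmg1B5}) together with Lemma~\ref{ldmg1B1} yields $c(s)=1-\frac{\sqrt2}{\sigma}\sqrt s+o(\sqrt s)$ and $s-k(c(s))\sim\frac{\sqrt{2s}}{\sigma(1-\lambda)\bold E\eta}$ as $s\to0$; here one uses $\rho=1$, i.e.\ $\mu\bold E\varkappa=((1-\lambda)\bold E\eta)^{-1}$, to recognise $\mu\bold E\varkappa(\varkappa-1)+\mu^{2}(\bold E\varkappa)^{2}\bold E\eta^{2}/\bold E\eta$ as $\sigma^{2}$. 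Consequently $c(s/B^{2})^{[kB]-[uB]}\to e^{-q(k-u)}$. The hard part is the joint asymptotics of $Q_{[kB]}^{s/B^{2}}(x)$ and $A_x^{[uB]}(s/B^{2})$: applying the same double-root expansion to the generating function of $S_{\nu_s}^{+}$ recalled before the lemma shows that $S_{\nu_{s/B^{2}}}^{+}/B$ converges in law to $\exp(q)$; feeding this into the representation $Q_k^{s}(x)=\frac{\tilde f_x(s)}{1-\tilde f(s)}\,\frac{1-c(s)}{c(s)^{k+1}}\sum_{i=0}^{k}c(s)^{i}\bold P[\pi^{s}(\eta_x)+S_{\nu_s}^{+}=i]$ gives $\frac{(s/B^{2})(1-\lambda)^{-1}}{s/B^{2}-k(c(s/B^{2}))}\,Q_{[kB]}^{s/B^{2}}(x)\to\tfrac12(e^{qk}-e^{-qk})$, and then --- through the $u=k$ case of (\ref{dmg1B15}), namely $A_x^{u}(s)=\bold P[D_x^{+}(\nu_s)\le u]-\frac{s(1-\lambda)^{-1}}{s-k(c(s))}Q_u^{s}(x)$, and $\bold P[D_x^{+}(\nu_{s/B^{2}})\le[uB]]\to1-e^{-qu}$ --- one gets $A_x^{[uB]}(s/B^{2})\to1-\cosh(qu)$ for $u>0$ and $\to0$ for $u<0$. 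Substituting these limits into (\ref{dmg1B15}) reproduces the transform above. The delicate point is precisely that the fixed-$s$ estimate $Q_k^{s}(x)\sim c(s)^{-k}$ is unusable when $s\sim B^{-2}$ and $k\sim B$, so one must control the truncated sum $\sum_{i\le[kB]}c(s/B^{2})^{i}\bold P[\pi^{s/B^{2}}(\eta_x)+S_{\nu_{s/B^{2}}}^{+}=i]$ via the scaling limit of $S_{\nu_s}^{+}$ rather than via its $B=\infty$ value.
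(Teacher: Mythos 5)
Your proposal is correct and, for parts (i) and (ii), follows essentially the paper's own route: the paper writes the same renewal-type equation for $E_k^{+}(x,z,s)$ (conditioning on whether $\nu_s$ precedes the first renewal $\eta_x$, with the geometric negative jump and restart at age $0$), passes to the generating function $\mathbb{E}_{\theta}^{s}(x,z)=\sum_k\theta^k E_k^{+}(x,z,s)$, and then pins down the unknown boundary function $\mathbb{E}_{\lambda/z}^{s}(0,z)$ by requiring the numerator to vanish at the simple zero $\theta=c(s)/z$ of the denominator — the one step your sketch leaves implicit under ``solve the resulting functional equation''; part (ii) is the same coefficient extraction and telescoping. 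The real divergence is in part (iii): the paper does not prove the diffusive asymptotics at all, but simply quotes the limit relations (\ref{dmg1B19}) for $c(s/B^2)$, $B^{-1}Q^{s/B^2}_{[kB]}(x)$ and $A^{[kB]}_{x}(s/B^2)$ from \cite{3Ka1}, substitutes them into (\ref{dmg1B15}) and inverts the transform via the L\'evy reflection identity, whereas you derive these asymptotics yourself: the second-order (double-root at $\theta=1$, since $\rho=1$) expansion of (\ref{dmg1B5}) giving $c(s)=1-\sqrt{2s}/\sigma+o(\sqrt s)$ and $s-k(c(s))\sim\sqrt{2s}/\bigl(\sigma(1-\lambda)\bold E\eta\bigr)$, the scaling limit $S^{+}_{\nu_{s/B^2}}/B\Rightarrow\exp(q)$ fed into the probabilistic representation of $Q_k^{s}(x)$ (which correctly reproduces $B^{-1}Q^{s/B^2}_{[kB]}(x)\to 2\sh(kq)/(\sigma\sqrt{2s}\,\bold E\eta)$ and hence $\sh(kq)$ after the prefactor), and the identity $A_x^{u}(s)=\bold P[D_x^{+}(\nu_s)\le u]-\tfrac{s(1-\lambda)^{-1}}{s-k(c(s))}Q_u^{s}(x)$ to get $A^{[uB]}_x(s/B^2)\to1-\ch(qu)$; your final transform and its inversion agree with the paper's. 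So your treatment of (iii) is more self-contained and correctly identifies the genuine difficulty (the truncated sum must be handled through the scaling limit of $S^{+}_{\nu_s}$, not the fixed-$s$ estimate $Q_k^s\sim c(s)^{-k}$), at the price of extra work and of details (uniformity in the interchange of limits, upgrading Laplace-transform convergence to pointwise convergence) that would need to be written out, while the paper's citation of \cite{3Ka1} buys brevity.
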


\begin{proof}
In view of the   total probability law,  homogeneity of the process  $X_{t}$
with respect to the first component, Markov property of   $ \eta_{1}(x) $
we can write for the function $ E_{k}^{+}(x,z,s),$ $k\in\mathbb{Z}^{+},$
$x\ge0 $ the following  equation
\begin{align}                                          \label{dmg1B16}
&   E_{k}^{+}(x,z,s)=
    s\int_{0}^{\infty}e^{-st}\bold P\left[\eta_{x}>t\right]
    \bold E \left[z^{\pi(t)};\pi(t)\le k \right]dt+\notag\\
&   + \int_{0}^{\infty}e^{-su}\sum_{v=0}^{k}
    \bold P\left[\eta_{x}\in du,\pi(u)=v \right]
    z^{v}\sum_{r=1}^{\infty}(1-\lambda)
    \lambda^{r-1}z^{-r} E_{k-v+r}^{+}(0,z,s).
\end{align}
Introduce the generating function
$
    \mathbb{E}_{\theta}^{s}(x,z)=
    \sum_{k\in\mathbb{Z}^{+}} \limits \theta^{k}E_{k}^{+}(x,z,s), $
     $ |\theta|<1.
$
Multiplying  (\ref{dmg1B16}) by  $\theta^{k} $ and summing  over
$k\in\mathbb{Z}^{+},$  we derive  the  following equation  for the function
$ \mathbb{E}_{\theta}^{s}(x,z) $
\begin{align}                                      \label{dmg1B17}
     \mathbb{E}_{\theta}^{s}(x,z)
&    = \frac{s}{1-\theta}\,
     \frac{1-\tilde f_{x}(s-k(z\theta))}{s-k(z\theta)}+\\
&    + \tilde f_{x}(s-k(z\theta))\frac{(1-\lambda) }{\lambda-z\theta}\,
    \left[\mathbb{E}_{\lambda/z}^{s}(0,z)
    -\mathbb{E}_{\theta}^{s}(0,z) \right],\quad |\theta|<1,\; |z|\ge 1. \notag
\end{align}
Letting  $ x=0 $ in the latter  equation  yields
\begin{align*}
      \mathbb{E}_{\theta}^{s}(0,z)
&     = \frac{z\theta-\lambda}
      {(1-\lambda) \tilde f(s-k(z\theta))+\lambda-z\theta}\times\\
&     \times\left[       \tilde f(s-k(z\theta))\frac{1-\lambda}
      {z\theta -\lambda}\,\mathbb{E}_{\lambda/z}^{s}(0,z)-
      \frac{s}{1-\theta}
      \frac{1-\tilde f(s-k(z\theta))}{s-k(z\theta)} \right].
\end{align*}
The  function which  enters  the  left-hand side of this equation is
analytic in $ |\theta|<1.$ In view  of Lemma \ref{ldmg1B1}  it has
denominator  of the right-hand side  has a  simple   zero in $ \theta=c(s)/z.$
Hence, the nominator of   right-hand side
should also have the simple zero. Letting $ \theta=c(s)/z $ in the nominator
we find the function $ \mathbb{E}_{\lambda/z}^{s}(0,z)$
$$
     \mathbb{E}_{\lambda/z}^{s}(0,z)=
     \frac{s(1-\lambda)^{-1}}{s-k(c(s))}\,\frac{1-c(s)}{1-c(s)/z},
     \qquad |z|\ge 1.
$$
Employing the  definition of the resolvent (\ref{dmg1B7}) and substituting
the expression for $ \mathbb{E}_{\theta}^{s}(0,z) $ into (\ref{dmg1B17}),
we get
\begin{align}                                      \label{dmg1B18}
     \mathbb{E}_{\theta}^{s}(x,z)=
     \mathbb{A}^{z\theta}_{x}(s)+
     \theta\,\frac{1-z}{1-\theta}\,\mathbb{A}^{z\theta}_{x}(s)
     + \mathbb{Q}_{z\theta}^{s}(x)\mathbb{E}_{\lambda/z}^{s}(0,z),
\end{align}
where
$$
     \mathbb{A}^{z\theta}_{x}(s)=\sum_{k=0}^{\infty}(z\theta)^{k}
     A^{k}_{x}(s)=\frac{s}{s-k(z\theta)}
     \left(\frac{1}{1-z\theta}-\frac{1}{1-\lambda}\,
     \mathbb{Q}_{z\theta}^{s}(x) \right).
$$
Using the definition of the resolvent (\ref{dmg1B8}) and comparing
the coefficients of $\theta^{k},$ $k\in\mathbb{Z}^{+} $ in both
sides of  (\ref{dmg1B18}) implies that
\begin{align*}
    E_{k}^{+}(x,z,s)=z^{k} A_{x}^{k}(s)+
    (1-z)\sum_{i=0}^{k-1}z^{i}A_{x}^{i}(s)+
    z^{k}Q_{k}^{s}(x)\mathbb{E}_{\lambda/z}^{s}(0,z),
\end{align*}
i.e.  the  equality (\ref{dmg1B14}) of the lemma.
Comparing the coefficients of
$ z^{i},$ $ i\in\overline {-\infty,k} $ in both sides of the latter equality,
we find
\begin{align*}
    \bold P[D_{x}(\nu_{s})&=i,
    \;D_{x}^{+}(\nu_{s})\le k ]= \\
&   = A_{x}^{i}(s)-A_{x}^{i-1}(s)+
   \frac{s}{s-k(c(s))}\,\frac{1-c(s)}{1-\lambda}\, c(s)^{k-i}
    Q_{k}^{s}(x), \qquad i\le k,
\end{align*}
where $   A_{x}^{i}(s)=0,$ for $ i<0.$
The latter    formula  implies  (\ref{dmg1B15}).
Denote
$
   \tilde e_{k}^{t}(x,u,B)= \bold P\left[D_{x}(tB^{2})\le [uB],
    D_{x}^{+}(tB^{2})\le [kB] \right].
$
It is  clear that
$$
   \lim_{B\to\infty} \int_{0}^{\infty}e^{-st}\tilde e_{k}^{t}(x,u,B)\,dt=
     \frac{1}{s}\lim_{B\to\infty} \mathfrak{E}_{[kB]}^{s/B^{2}}(x,[uB]).
$$
To proceed  further, we  need the  following
limiting   equalities (see \cite{3Ka1})
\begin{align}                                      \label{dmg1B19}
&    c(s/B^{2})=
     1-{B}^{-1}\sqrt{2s}/\sigma+o(B^{-1}),\notag\\
&   \lim_{B\to\infty} B^{-1}
    Q_{[kB]}^{s/B^{2}}(x)=
    \frac{2\sh\left(k\sqrt{2s}/\sigma\right)}
    { \sigma\sqrt{2s}\bold E\eta}=
    \lim_{B\to\infty} B^{-1}\bold E Q_{\delta+[kB]}^{s/B^{2}},\notag\\
&    \lim_{B\to\infty} A^{[kB]}_{x}(s/B^{2})= 1-
    \ch\left(k\sqrt{2s}/\sigma\right) =
    \lim_{B\to\infty} A^{\delta+[kB]}_{0}(s/B^{2}).
\end{align}
In view of these equalities and of the formula  (\ref{dmg1B15}) we  derive
\begin{align*}
    \lim_{B\to\infty}
&   \int_{0}^{\infty}e^{-st}\tilde e_{k}^{t}(x,u,B)\,dt=
    {s}^{-1} \bold I_{\{u<0\}}\left(e^{u\sqrt{2s}/\sigma}/2
    -e^{-(2k-u)\sqrt{2s}/\sigma}/2\right)\\
&   + {s}^{-1} \bold I_{\{u\in[0,k]\}}
    \left(1-e^{-u\sqrt{2s}/\sigma}/2-e^{-(2k-u)\sqrt{2s}/\sigma}/2\right),
    \quad u\le k.
\end{align*}
Denote  by  $w_{\{t\ge0\}}$  the  symmetric Wiener process with the dispersion
$\sigma $ and  by $ \tau^{a}=\inf\{t:w_{t}\ge a\}$ the first passage time of
the  level $ a\in\mathbb{R}_{+}.$  The L\'{e}vy  formula
$ \bold P\left[\tau\le t\right]= 2\bold P\left[w_{t}\ge a\right] $
implies for the Laplace transforms  that
$$
    \frac{1}{s}\,e^{-a\sqrt{2s}/\sigma}=
    2\int_{0}^{\infty}e^{-st} \bold P\left[w_{t}\ge a\right]dt.
$$
Employing  the latter  formula  to  invert the Laplace  transforms
in the previous  equality, we derive the second limiting formula of the
theorem.
\end{proof}

Let  $ k,r\in\mathbb{Z}^{+},$  $u\in\overline{-r,k} $ and denote by
\begin{align*}
    \tilde e_{r,k}^{t}(x,u)
&   =\bold P \left[-r\le D_{x}^{-}(t),\,
    D_{x}^{-}(t) \le u,\, D_{x}^{+}(t)\le k \right]=
    \bold P \left[ D_{x}(t)\le u,\, \chi_{x}^{B}(r)>t\right], \\
&   \mathfrak{E}_{r,k}^{s}(x,u)= \tilde e_{r,k}^{\nu_{s}}(x,u)=
    s\int_{0}^{\infty} e^{-st} \tilde e_{r,k}^{t}(x,u)\,dt
\end{align*}
the joint distribution of $ \{D_{x}^{-}(t),D_{x}(t),D_{x}^{+}(t)\}$
and its  Laplace  transform.

\begin{theorem}              \label{tdmg1B2}

Let $ \nu_{s}\sim\exp(s) $  be an exponential random variable independent
of the process $ D_{x}(t),$
$B=r+k.$ Then
\begin{itemize}
\item[\rm{(i)}]
the joint distribution of
$ \{D_{x}^{-}(\nu_{s}),D_{x}(\nu_{s}),D_{x}^{+}(\nu_{s})\}$
is such  that
\begin{align}                                         \label{dmg1B20}
    \mathfrak{E}_{r,k}^{s}(x,u)=
    A_{x}^{u}(s) -
    \frac{Q_{k}^{s}(x)}{\bold E\,Q_{\delta+B}^{s}}\,
    \bold E A^{\delta+r+u}_{x}(s), \quad u\in\overline{-r,k},
\end{align}
where
$
\bold E A^{\delta+r+u}_{x}(s)=
(1-\lambda)\sum_{i=1}^{\infty}\limits \lambda^{i-1}A_{x}^{i+r+u}(s);$
\item[\rm{(ii)}]
under the condition $ (A) $ and $r\in(0,1),$ $k=1-r$ the
joint distribution
$ \tilde e_{[rB],[kB]}^{tB^{2}}(x,[uB]) $  weakly converges as
$ B\to\infty $ to the joint  distribution
$$
    \bold P\left[-r\le \inf_{v\le t} w_{v},\, w_{t}\le u,\,
    \sup_{v\le t} w_{v} \le k \right], \quad u\in[-r,k]
$$
of the infimum, the supremum and the value of the symmetric
Wiener process with  the  dispersion  \textbf{$ \sigma. $}
In addition,  the following
limiting equality holds
\begin{align}                                       \label{dmg1B21}
    \lim_{B\to\infty}
    \tilde e_{[rB],[kB]}^{tB^{2}}(x,[uB])=
    \frac{4}{\pi}
    \sum_{n\in\mathbb{N}} \frac{e^{-\frac{t}{2}(\pi n\sigma)^{2}}}{n}\,
    \sin(r\pi n)
    \sin^{2}\left(\frac{r+u}{2}\,n\pi\right).
\end{align}
\end{itemize}
\end{theorem}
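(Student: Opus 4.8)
The plan is to derive part~(i) from Lemma~\ref{ldmg1B4} by a strong Markov decomposition at the first exit of the strip $[-r,k]$ through the lower boundary, and then to obtain part~(ii) by letting $B\to\infty$ in the Laplace transform of~(i).

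\textbf{Part (i).} Write $\chi=\chi_{r}^{B}(x)$ and recall that $\mathfrak{E}_{r,k}^{s}(x,u)=\bold P[D_{x}(\nu_{s})\le u,\;\chi>\nu_{s}]$ and $\mathfrak{A}_{r}=\{D_{x}(\chi)<-r\}$. On the event $\{D_{x}^{+}(\nu_{s})\le k\}$ the trajectory cannot have left $[-r,k]$ upwards, so $\{\chi\le\nu_{s}\}\subseteq\mathfrak{A}_{r}$ there; splitting $\{D_{x}(\nu_{s})\le u,\;D_{x}^{+}(\nu_{s})\le k\}$ according to $\chi>\nu_{s}$ or $\chi\le\nu_{s}$ gives
\begin{align*}
\mathfrak{E}_{k}^{+}(x,u,s)=\mathfrak{E}_{r,k}^{s}(x,u)+\bold P\bigl[D_{x}(\nu_{s})\le u,\;D_{x}^{+}(\nu_{s})\le k,\;\chi\le\nu_{s}\bigr].
\end{align*}
On the second event the process exits at $\chi$ through $-r$; this happens at one of the renewal epochs $\eta_{n}(x)$, so the linear component is reset to $0$ and $D_{x}(\chi)=-r-T$ with $T\sim ge(\lambda)$ independent of $\chi$. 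Using that $\chi$ is a Markov time of $\{X_{t}\}$, the homogeneity of $\{X_{t}\}$ in its first coordinate, the lack of memory of $\nu_{s}$ (which restarts as an independent $\exp(s)$ variable at $\chi$), and Theorem~\ref{tdmg1B1}(i) for $\bold E[e^{-s\chi};T=m,\mathfrak{A}_{r}]$, one restarts from the state $\{-r-m,0\}$ — for which the constraints become $D_{0}(\nu_{s})\le u+r+m$, $D_{0}^{+}(\nu_{s})\le B+m$ — to obtain
\begin{align*}
\bold P\bigl[D_{x}(\nu_{s})\le u,\;D_{x}^{+}(\nu_{s})\le k,\;\chi\le\nu_{s}\bigr]=\frac{Q_{k}^{s}(x)}{\bold E\,Q_{\delta+B}^{s}}\sum_{m\ge1}(1-\lambda)\lambda^{m-1}\,\mathfrak{E}_{B+m}^{+}(0,u+r+m,s).
\end{align*}
Now insert (\ref{dmg1B15}): on one hand $\mathfrak{E}_{k}^{+}(x,u,s)=A_{x}^{u}(s)+\frac{s(1-\lambda)^{-1}}{s-k(c(s))}c(s)^{k-u}Q_{k}^{s}(x)$; on the other, since $(B+m)-(u+r+m)=k-u$, $\mathfrak{E}_{B+m}^{+}(0,u+r+m,s)=A_{0}^{u+r+m}(s)+\frac{s(1-\lambda)^{-1}}{s-k(c(s))}c(s)^{k-u}Q_{B+m}^{s}$. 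Summing the latter against $(1-\lambda)\lambda^{m-1}$ produces the term $\bold E\,A^{\delta+r+u}_{x}(s)$ together with $\frac{s(1-\lambda)^{-1}}{s-k(c(s))}c(s)^{k-u}\bold E\,Q_{\delta+B}^{s}$; multiplied by $Q_{k}^{s}(x)/\bold E\,Q_{\delta+B}^{s}$, the factor $\bold E\,Q_{\delta+B}^{s}$ cancels and the surviving $\frac{s(1-\lambda)^{-1}}{s-k(c(s))}c(s)^{k-u}Q_{k}^{s}(x)$ cancels the identical term in $\mathfrak{E}_{k}^{+}(x,u,s)$. What remains is exactly (\ref{dmg1B20}).

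\textbf{Part (ii), the diffusion limit.} I would pass to the Laplace transform in $t$: as in the computation preceding (\ref{dmg1B19}), $\lim_{B\to\infty}\int_{0}^{\infty}e^{-st}\tilde{e}_{[rB],[kB]}^{tB^{2}}(x,[uB])\,dt=s^{-1}\lim_{B\to\infty}\mathfrak{E}_{[rB],[kB]}^{s/B^{2}}(x,[uB])$. Replacing $s$ by $s/B^{2}$ in (\ref{dmg1B20}) and applying (\ref{dmg1B19}) term by term, with $\beta=\sqrt{2s}/\sigma$: $A_{x}^{[uB]}(s/B^{2})\to\bold I_{\{u\ge0\}}(1-\ch(u\beta))$ (the quantity is $0$ when $u<0$); $Q_{[kB]}^{s/B^{2}}(x)/\bold E\,Q_{\delta+[rB]+[kB]}^{s/B^{2}}\to\sh(k\beta)/\sh(\beta)$ because $r+k=1$; and, by dominated convergence over the geometric weights of $\delta$ and because $r+u\in[0,1]$, $\bold E\,A^{\delta+[rB]+[uB]}_{x}(s/B^{2})\to1-\ch((r+u)\beta)$. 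Hence
\begin{align*}
\lim_{B\to\infty}\int_{0}^{\infty}e^{-st}\tilde{e}_{[rB],[kB]}^{tB^{2}}(x,[uB])\,dt=\frac{1}{s}\Bigl(\bold I_{\{u\ge0\}}\bigl(1-\ch(u\beta)\bigr)-\frac{\sh(k\beta)}{\sh(\beta)}\bigl(1-\ch((r+u)\beta)\bigr)\Bigr).
\end{align*}

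\textbf{Part (ii), inversion and identification.} It remains to recognise the right-hand side. The transition density of the symmetric Wiener process of dispersion $\sigma$ killed on leaving $[-r,k]$ is, by the Dirichlet eigenexpansion on an interval of length $r+k=1$, equal to $2\sum_{n\ge1}e^{-\frac{t}{2}(n\pi\sigma)^{2}}\sin(n\pi r)\sin(n\pi(y+r))$; integrating over $y\in[-r,u]$ and using $1-\cos(n\pi(u+r))=2\sin^{2}(\frac{u+r}{2}n\pi)$ yields precisely the series in (\ref{dmg1B21}), which is therefore $\bold P[-r\le\inf_{v\le t}w_{v},\;w_{t}\le u,\;\sup_{v\le t}w_{v}\le k]$, and its Laplace transform in $t$ equals $\frac{4}{\pi}\sum_{n\ge1}\frac{\sin(r\pi n)\sin^{2}(\frac{r+u}{2}n\pi)}{n(s+\frac{1}{2}n^{2}\pi^{2}\sigma^{2})}$. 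A Mittag--Leffler (partial-fraction) expansion of $\beta\mapsto\beta^{-2}(\bold I_{\{u\ge0\}}(1-\ch(u\beta))-\frac{\sh(k\beta)}{\sh(\beta)}(1-\ch((r+u)\beta)))$ along the poles $\beta=\pm in\pi$ (the zeros of $\sh(\beta)$) shows that this series coincides with the displayed limit; alternatively one inverts using the L\'evy formula $s^{-1}e^{-a\sqrt{2s}/\sigma}=2\int_{0}^{\infty}e^{-st}\bold P[w_{t}\ge a]\,dt$ as in the proof of Lemma~\ref{ldmg1B4}. Since the Laplace transforms in $t$ of $\tilde{e}_{[rB],[kB]}^{tB^{2}}(x,[uB])$ converge to the Laplace transform of a function continuous in $t$, the continuity theorem for Laplace transforms gives the asserted weak convergence, and the explicit limit is (\ref{dmg1B21}).

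\textbf{Expected main obstacle.} Part~(i) is essentially bookkeeping once the strong Markov restart is correctly set up; the only delicate point is the identity $(B+m)-(u+r+m)=k-u$ that makes the overshoot-dependent powers of $c(s)$ cancel. The real work is in part~(ii): pushing the $B\to\infty$ limits through (in particular the degenerate case $u<0$, where $A_{x}^{[uB]}$ vanishes but the second term survives) and, above all, carrying out the partial-fraction/Mittag--Leffler identification of the ratio of hyperbolic functions with the theta-type series (\ref{dmg1B21}) --- this last step, though classical, is where the genuine computation lies.
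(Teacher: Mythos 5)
Your argument is correct and, at its core, it is the paper's argument: both rest on the same strong Markov decomposition at the first passage below $-r$ (the paper's identity (\ref{dmg1B22})), on Theorem \ref{tdmg1B1}(i) for the lower-exit transform, on Lemma \ref{ldmg1B4} for the one-boundary quantity, and on the limits (\ref{dmg1B19}) for the diffusion scaling. The differences are in execution. In (i) you work directly with the distribution functions, inserting (\ref{dmg1B15}) into the restart identity and using $(B+m)-(u+r+m)=k-u$ to cancel the $c(s)^{k-u}$ terms, whereas the paper writes the decomposition for the generating functions $\bold E\left[z^{D_{x}(\nu_{s})};\cdot\right]$, derives (\ref{dmg1B23}) and then compares coefficients of $z^{u}$; your route avoids the $\check A_{0}^{B}$, $\hat Q_{B}^{s}$ bookkeeping and is slightly shorter. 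In (ii) you reach the same Laplace-transform limit (\ref{dmg1B24}) and then perform the inversion yourself (Dirichlet eigenexpansion of the killed Wiener kernel on an interval of length one, plus a partial-fraction/residue identification of the hyperbolic ratio at the poles $s_{n}=-\tfrac12(\pi n\sigma)^{2}$), a step the paper delegates to the citation \cite{TKad3}; this mirrors the residue computation the paper does carry out in the analogous Theorem \ref{tdmg1B5}, so nothing essential is missing, though for completeness the residues should actually be computed rather than only asserted via the Mittag--Leffler expansion, and your appeal to the continuity theorem for the passage from Laplace transforms to weak convergence is at the same level of detail as the paper's. Finally, note that your computation produces $\bold E A_{0}^{\delta+r+u}(s)$ (linear component restarted at $0$), exactly as the paper's own proof yields and as the analogous formula (\ref{dmg1B34}) displays; the subscript $x$ in the statement (\ref{dmg1B20}) is evidently a misprint, so your version of the formula is the correct one.
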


\begin{proof}
The total probability law, homogeneity  of the process $ X_{t} $
with respect  to the  first  component, Markov property of
$ \chi_{r}^{B}(x) $  for  all $ k,r\in\mathbb{Z}^{+},$ $x\ge0 $
imply the  following  equation for $|z|\ge 1 $
\begin{align}                                       \label{dmg1B22}
&   \bold E\left[ z^{D_{x}(\nu_{s})};
     D_{x}^{+}(\nu_{s})\le k\right]=
    \bold E\left[ z^{D_{x}(\nu_{s})};\chi_{r}^{B}(x)>\nu_{s}\right]+\notag\\
&   +\sum_{i=1}^{\infty}
    \bold E\left[e^{-s\chi_{r}^{B}(x)};T=i,\mathfrak{A}_{r} \right]
    z^{-(r+i)}
    \bold E\left[ z^{D_{x}(\nu_{s})};D_{x}^{+}(\nu_{s})\le i+B\right],
\end{align}
where $ B=k+r.$ This equation for the case of a spectrally one-sided L\'{e}vy
process was derived in \cite{TKad3}, and for the general L\'{e}vy  process
in  \cite{2Ka2}.
Let us briefly explain the equation (\ref{dmg1B22}). The increments of the
process $ D_{x}(t) $ on the interval   $[0,\nu_{s}]$  without the intersection
of the level  $k $ (the left-hand side) can be realized
either on the sample paths of the process  which do not
cross  the  negative level $-r $ (the first term of the right-hand side)
or on the sample paths which do cross  the level  $-r $
and then  the  further evolution  of the process is nothing but
its  probabilistic copy on $ [0,\nu_{s}]$ (the second term).
In  view of  (\ref{dmg1B22})  and   (\ref{dmg1B12}),
(\ref{dmg1B14})  we  find   for the  function
$
    E_{r,k}^{s}(x,z)=
   \bold E\left[ z^{D_{x}(\nu_{s})};\chi_{r}^{B}(x)>\nu_{s}\right]
$
that
\begin{align}                                     \label{dmg1B23}
     E_{r,k}^{s}(x,z)
&     = E_{k}^{+}(x,z,s)-
     \frac{Q_{k}^{s}(x)}{\bold E\,Q_{\delta+B}^{s}}
     (1-\lambda)\sum_{i\in\mathbb{N}}\lambda^{i-1} z^{-(r+i)}
     E_{i+B}^{+}(0,z,s)=\notag\\
&    = z^{k}A_{x}^{k}(s)+(1-z)\sum_{i=0}^{k-1}z^{i} A_{x}^{i}(s) +\notag\\
&    + z^{k}\frac{ Q_{k}^{s}(x)}{\hat Q_{B}^{s}(\lambda)}\,
     \frac {(1-\lambda)\check A_{0}^{B}(s,\lambda)-
     (1-z)(\lambda/z)^{B+1}\check A_{0}^{B}(s,z)}
     {1-\lambda/z},
\end{align}
where $ \check A_{0}^{B}(s,z)=\sum_{i=0}^{B}\limits z^{i} A_{0}^{i}(s), $
$
   \hat Q_{B}^{s}(\lambda)=
   \sum_{i=B+1}^{\infty}\limits\lambda ^{i}Q_{i}^{s}(0).
$
The  formula (\ref{dmg1B13}) yields
$$
     \bold P\left[\chi_{r}^{B}(x)>\nu_{s}\right]=
     A_{x}^{k}(s) +\frac{ Q_{k}^{s}(x)}{\hat Q_{B}^{s}(\lambda)}\,
     \check A_{x}^{B}(s,\lambda).
$$
It is not difficult to derive the following equality
$$
    \sum_{u=-r}^{k}z^{u}\mathfrak{E}_{r,k}^{s}(x,u)=
    \frac{1}{1-z}\left( E_{r,k}^{s}(x,z)-
    z^{k+1}\bold P\left[\chi_{r}^{B}(x)>\nu_{s}\right] \right).
$$
The  right-hand   side  of (\ref{dmg1B23}) implies that
\begin{align*}
    \sum_{u=-r}^{k}z^{u}\mathfrak{E}_{r,k}^{s}(x,u)
    = \sum_{u=0}^{k}z^{u}A_{x}^{u}(s)+
    z^{k}\frac{ Q_{k}^{s}(x)}{\hat Q_{B}^{s}(\lambda)}\,
    \sum_{i=0}^{B}(\lambda/z)^{i}
    \sum_{j=0}^{B-i} \lambda^{j} A_{0}^{j}(s).
\end{align*}
Comparing the  coefficients  of   $z^{u},$
$ u\in\overline{-r,k}, $  we find
$$
    \mathfrak{E}_{r,k}^{s}(x,u)= A_{x}^{u}(s)+
    \frac{ Q_{k}^{s}(x)}{\hat Q_{B}^{s}(\lambda)}\,\lambda^{k-u}
    \sum_{i=0}^{r+u} \lambda^{i} A_{0}^{i}(s).
$$
Since
$$
    \bold E Q_{\delta+B}^{s}=
    (1-\lambda) \lambda^{-B-1}\hat Q_{B}^{s}(\lambda), \qquad
    \sum_{i=0}^{\infty} \lambda^{i} A_{0}^{i}(s)=0,
$$
one  can see   that  the previous  equality is  the  formula (\ref{dmg1B20}).
Let us verify (\ref{dmg1B21}).
It is clear that
$$
    s\int_{0}^{\infty}e^{-st} \tilde e_{[rB],[kB]}^{tB^{2}}(x,[uB])\,dt =
    \mathfrak{E}^{s/B^{2}}_{[rB],[kB]}(x,[uB]),\qquad k\in(0,1)\quad r=1-k,
$$
where the function $ \mathfrak{E}^{s}_{r,k}(x,u) $ is determined by
(\ref{dmg1B20}).
Thus,
\begin{align}                                       \label{dmg1B24}
    \lim_{B\to\infty}
&   \int_{0}^{\infty}e^{-st}
    \tilde e_{[rB],[kB]}^{tB^{2}}(x,[uB])\,dt=
    \frac{1}{s}\lim_{B\to\infty} \mathfrak{E}^{s/B^{2}}_{[rB],[kB]}(x,[uB])
    \stackrel{\rm def}{=} e^*(s)=\notag\\
&   = \frac{1}{s}
    \left[1-\ch\left(\frac{u^{+}}{\sigma}\sqrt{2s}\right)\right]+
    \frac{1}{s}\;\frac{\sh{k}
    \sqrt{2s}/\sigma}{\sh\sqrt{2s}/\sigma}\left
    [\ch\left(\frac{r+u}{\sigma}\sqrt{2s}\right)-1\right],
\end{align}
where  $ u^{+}=\max(0,u).$  In order to  compute this limit  we
used the formulae  (\ref{dmg1B19}). Note,  that  the inversion of  the Laplace  transform
in the right-hand side  of  (\ref{dmg1B24})   this  equality
was  found  in \cite{TKad3} and resulted  into
the  following  formula $(\alpha>0)$
\begin{align*}
   \bold P
&  \left[-r\le \inf_{v\le t} w_{v},\, w_{t}\le u,\,
   \sup_{v\le t}w_{v} \le k \right]=
   \frac{1}{2\pi i}
   \int_{\alpha-i\infty}^{\alpha+i\infty}e^{st}e^*(s)\,ds=\\
&  = \frac{4}{\pi}\,\sum_{n\in\mathbb{N}}
   \frac{e^{-\frac{t}{2}(\pi n\sigma)^{2}}}{n}\,
   \sin(r\pi n)
   \sin^{2}\left(\frac{r+u}{2}\,n\pi\right), \qquad u\in[-r,k].
\end{align*}
Therefore, we  established  the weak convergence of the joint
distribution  $ \tilde e_{x}^{t}(u,B) $ as  $ B\to\infty $ to the
corresponding distribution  of the Wiener  process  and also
verified the formula (\ref{dmg1B21}).
\end{proof}

\section { Reflections from the boundary } \label{smg4}

Denote by $ D^{r}_{x}(t)=r+D_{x}(t), $ $ t\ge0 $
the process starting   from  $  r\in \mathbb{Z} $ when
$ \eta_{x}^{+}(0)=x\ge0. $
Let  $ B\in\mathbb{Z}^{+} $ and for all $ t\ge0 $ we define right-continuous
processes reflected at the boundary B as follows
\begin{align}                                    \label{dmg1B25}
&    \overline D^{B}_{r}(x,t)= D^{r}_{x}(t)
     -\max\left\{0,\sup_{[0,t]}D^{r}_{x}(\cdot)-B \right\}
     \in \overline{-\infty,B},\quad r\in \overline{-\infty,B}.
\end{align}
The  first reflection from the upper boundary $ B $ of the process
$ \overline D_{r}^{B}(x,t) $ takes place at  $\tau^{B-r}(x).$ Then
the process stays at the boundary  for  some random  time  $\eta_{l},$
where  $l=\eta^{+}_{x}(\tau^{B-r}(x)).$ At  the  instant
$t= \tau^{B-r}(x)+ \eta_{l}$ the process is reflected to a random state
$B-\delta.$ In the sequel  the  evolution of   the process
$ \overline D_{r}^{B}(x,t) $
is a probabilistic copy of  its evolution on $ [0,\tau^{B-r}(x)+ \eta_{l}).$
It is worth noticing that reflections from the boundaries reflected by
infimum (supremum) were introduced by
L\'{e}vy for a standard
Wiener process.  Applying the symmetry principle  and  the mirror   reflection
principle  L\'{e}vy  determined  the  distributions of the boundary functionals
of the reflected  standard Wiener process. We will show that these distributions
are the weak limit distributions  for the reflected process
after an appropriate  scaling of time and space.

\subsection{Passage of the  lower boundary}

We now define the boundary functionals for
process (\ref{dmg1B25}). For $ r\in\overline{0,B} $  denote
\begin{align*}
   \overline{\tau}^{B}_{r}(x) =
   \inf\{t:\overline D^{B}_{r}(x,t)<0\}
   \stackrel{\rm def}{=}\overline{\tau},\qquad
   \overline{T}^{B}_{r}(x)=
   -\overline{D}^{B}_{r}(\overline{\tau})
   \stackrel{\rm def}{=} \overline{T}, \quad r\in[0,B]
\end{align*}
the first crossing time of the lower level $0$ by the process
$ \overline D^{B}_{r}(x,t)$
and the value of the overshoot at this instant.
Note, that  these  boundary  functionals were  studied in \cite{L9}
for the reflected  L\'{e}vy processes    generated  by  infimum (supremum).
It is worth noticing  that  in this   article   the  asymptotic
expansions for the distributions of the characteristics of the process
were   determined for the reflected
L\'{e}vy processes   obeying the two-boundary  Cramer's conditions.

The  reflected  spectrally one-sided L\'{e}vy processes  generated
by the infimum (supremum) of the process  were  considered in
\cite{Avram2004}, \cite{Nguyen2005}. An interesting  application
in queueing theory  for  the  spectrally one-sided L\'{e}vy process
reflected by  its infimum   was  given  in  \cite{Bekker2008}.

\begin{theorem}    \label{tdmg1B3}

Let $ \{\overline D^{B}_{r}(x,t)\}_{t\ge0}$
be  the reflected  processes
defined by
(\ref{dmg1B25}), $B\in\mathbb{Z}^{+},$ $ r\in\overline{0,B},$
\begin{align*}
    V^{k}(x,dl,m,s)=
    \bold E\left [e^{-s\chi};L\in dl,T=m,
    \mathfrak{A}^{k}\right],\;
    V_{r}(x,m,s)=\bold E\left [e^{-s\chi};
    T=m,\mathfrak{A}_{r}\right]
\end{align*}
the  Laplace  transforms of the joint distribution of
$\{\chi^{B}_{r}(x),L,T\}$ of the process
\\$\{D_{x}(t)\}_{t\ge 0}$ \cite{2Ka13}. Then

\begin{itemize}
\item[\rm{(i)}]
if $\delta\in\mathbb{N}$ (an  arbitrarily distributed   non-negative  variable),
then the Laplace transform of
the joint distribution of $ \{\overline\tau,\overline T\} $
is such that   for $(m\in\mathbb{N})$
\begin{align}                                        \label{dmg1B26}
&   \overline v^{s}_{r}(x,m)=
    \bold E\left [e^{-s\overline{\tau}^{B}_{r}(x)};
    \overline T=m\right]
    = V_{r}(x,m,s)\notag\\
&   +\frac{a^{B-r}(x)}{1-A(0)}
    \left[\bold P[\delta=m+B]
    +\sum_{i=1}^{B} \bold P[\delta=i]
    V_{B-i}(0,m,s) \right],
\end{align}
where  $ V^{k}(x,dl,s)=\sum_{m=1}^{\infty} V^{k}(x,dl,m,s),$
\begin{align*}
   a^{k}(x)=\int_{0}^{\infty}
   V^{k}(x,dl,s)\tilde f_{l}(s),\qquad
   A(x) =\sum_{k=1}^{B}\bold P\left[\delta=k\right]a^{k}(x);
   \end{align*}
\item[\rm{(ii)}]
if  $\delta\sim ge(\lambda),$
then  the following  equalities  hold
\begin{align}                                      \label{dmg1B27}
   \overline v^{s}_{r}(x,m) =
   \frac{\tilde f_{x}(s)+
   (1-\tilde f(s))S^{s}_{B-r-1}(x)}
   {\tilde f(s)+ (1-\tilde f(s))
   \bold E\,S^{s}_{\delta+B-1}}\,
   (1-\lambda) \lambda^{m-1}, \quad r\in\overline{0,B},
\end{align}
where
$
    S^{s}_{k}(x)=\sum_{i=0}^{k} Q_{i}^{s}(x),$
    $\bold E\,S^{s}_{\delta+B-1}=
    (1-\lambda)\sum_{i=1}^{\infty}\lambda^{i-1}
    S^{s}_{i-1+B}(0);
$
the  random variable $ \overline{\tau}^{B}_{r}(x) $
is proper
$ (\bold P\left[\overline{\tau}^{B}_{r}(x)<\infty\right]=1 )$
and
\begin{align}                                  \label{dmg1B28}
   \bold E \overline{\tau}^{B}_{r}(x)=
   \bold E\eta_{x}- \bold E\eta+
   \bold E\eta\left[\bold E S_{\delta+B-1}-S_{B-r-1}(x) \right]<\infty,
\end{align}
where $ S_{k}(x)=S_{k}^{0}(x),$
$ \bold E S_{\delta+B} =\bold E S_{\delta+B}^{0};$
\item[\rm{(iii)}]
under  the conditions $(A)$  the following equality is  valid
\begin{align*}
    \lim_{B\to\infty} \bold E e^{-{s}
    \overline{\tau}^{B}_{[rB]}(x)/B^{2}}=
    \frac{\ch\left(k\sqrt{2s}/\sigma\right)}
    {\ch\left(\sqrt{2s}/\sigma\right)},  \quad r\in(0,1),\;k=1-r.
\end{align*}
\end{itemize}
\end{theorem}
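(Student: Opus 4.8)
The plan is to establish the three parts of the theorem in order, reducing everything to the two-sided results of Theorem~\ref{tdmg1B1} and Lemma~\ref{ldmg1B3} together with the diffusion asymptotics~(\ref{dmg1B19}).

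\emph{Part (i).} The first step is a first--passage decomposition of the trajectory of $\overline D^{B}_{r}(x,\cdot)$ up to $\overline{\tau}$. Until the first reflection from the upper boundary the reflected process coincides with $D^{r}_{x}=r+D_{x}$, so the first exit of $D_{x}$ from the interval $[-r,B-r]$ is governed by Theorem~\ref{tdmg1B1}: on the event $\mathfrak{A}_{r}$ the reflected process drops below $0$ at that instant, contributing $V_{r}(x,m,s)$; on the event $\mathfrak{A}^{B-r}$ it reaches $B$ at time $\chi=\tau^{B-r}(x)$ with linear component $L=\eta^{B-r}(x)$, stays at $B$ for an extra time $\eta_{L}$ (Laplace factor $\tilde f_{L}(s)$), and at the next renewal epoch drops by $\delta$. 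Conditioning on $\delta$, and using homogeneity of $X_{t}$ in the first component together with the Markov property of $\eta_{1}(\cdot)$ and the renewal structure of~(\ref{dmg1B25}): if $\delta=m+B$ the process lands at $-m$, while if $\delta=i\le B$ it restarts from $(B-i,0)$ and the remaining path is a probabilistic copy of $\overline D^{B}_{B-i}(0,\cdot)$. Integrating over $L$ against $\tilde f_{L}(s)$ produces the coefficient $a^{B-r}(x)=\int_{0}^{\infty}V^{B-r}(x,dl,s)\tilde f_{l}(s)$ and the equation
\begin{align*}
\overline v^{s}_{r}(x,m)=V_{r}(x,m,s)+a^{B-r}(x)\left(\bold P[\delta=m+B]+\sum_{i=1}^{B}\bold P[\delta=i]\,\overline v^{s}_{B-i}(0,m)\right).
\end{align*}
Evaluating this identity at $x=0$ for the starting states $B-1,\dots,0$ turns it into a finite linear system; writing $C$ for the bracketed factor and substituting $\overline v^{s}_{B-i}(0,m)=V_{B-i}(0,m,s)+a^{i}(0)\,C$ back into $C$ reduces it to the scalar relation $C\,(1-A(0))=\bold P[\delta=m+B]+\sum_{i=1}^{B}\bold P[\delta=i]V_{B-i}(0,m,s)$, where $A(0)<1$ for $s>0$ because $\tilde f_{l}(s)<1$. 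Solving for $C$ and inserting it back yields~(\ref{dmg1B26}).

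\emph{Part (ii).} Here I would specialize $\bold P[\delta=n]=(1-\lambda)\lambda^{n-1}$ in~(\ref{dmg1B26}) and invoke the closed forms $V_{r}(x,m,s)=Q^{s}_{B-r}(x)(\bold E\,Q^{s}_{\delta+B})^{-1}(1-\lambda)\lambda^{m-1}$ from Theorem~\ref{tdmg1B1} and the representation~(\ref{dmg1B9}) of $f^{k}(x,dl,m,s)$ to compute $a^{k}(x)$. The integral $\int_{0}^{\infty}V^{k}(x,dl,s)\tilde f_{l}(s)$ is handled with the elementary identity $e^{-sl}[1-F(l)]\tilde f_{l}(s)=\int_{l}^{\infty}e^{-sv}F(dv)$ and the resolvent identities behind~(\ref{dmg1B7})--(\ref{dmg1B8}); after this the partial sums $S^{s}_{k}(x)=\sum_{i=0}^{k}Q^{s}_{i}(x)$ appear, the factor $(1-\lambda)\lambda^{m-1}$ factors out of the whole expression (consistently with $\overline T\sim ge(\lambda)$), and collecting terms---using $\sum_{i\ge 0}\lambda^{i}A^{i}_{0}(s)=0$ and the recursion for $Q_{k}(\cdot)$ from Section~1---collapses numerator and denominator to $\tilde f_{x}(s)+(1-\tilde f(s))S^{s}_{B-r-1}(x)$ and $\tilde f(s)+(1-\tilde f(s))\bold E\,S^{s}_{\delta+B-1}$, which is~(\ref{dmg1B27}). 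Properness follows by setting $s=0$, when $1-\tilde f(0)=0$ and both numerator and denominator equal $1$; formula~(\ref{dmg1B28}) is obtained by differentiating $\overline v^{s}_{r}(x)=\sum_{m}\overline v^{s}_{r}(x,m)$ at $s=0$ with $\tilde f'(0)=-\bold E\eta$, $\tilde f'_{x}(0)=-\bold E\eta_{x}$ and the finiteness of $S_{k}(\cdot)$ and $\bold E\,S_{\delta+B-1}$. The main obstacle of the whole proof lies precisely in this explicit evaluation of $a^{k}(x)$---integrating the three--term expression~(\ref{dmg1B9}) against $\tilde f_{l}(s)$ and recognizing the resulting renewal integrals in terms of $\{Q^{s}_{k}(x)\}$ and its partial sums; everything else is bookkeeping with identities already established.

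\emph{Part (iii).} Finally, replacing $s$ by $s/B^{2}$ and $r$ by $[rB]$ in~(\ref{dmg1B27}) and summing over $m$ (the geometric weights sum to $1$), $\bold E\,e^{-s\overline{\tau}^{B}_{[rB]}(x)/B^{2}}$ equals the ratio in~(\ref{dmg1B27}) under these substitutions. Using $1-\tilde f(s/B^{2})=sB^{-2}\bold E\eta+o(B^{-2})$, the relations~(\ref{dmg1B19}) for $c(s/B^{2})$ and $Q^{s/B^{2}}_{[kB]}(\cdot)$, and the Riemann--sum consequence $B^{-2}S^{s/B^{2}}_{[kB]}(x)\to(\ch(k\sqrt{2s}/\sigma)-1)/(s\,\bold E\eta)$ (together with its analogue for $\bold E\,S^{s/B^{2}}_{\delta+B-1}$ at $k=1$, where dominated convergence over $\delta$ applies since $\lambda\,c(s/B^{2})^{-1}<1$ for $B$ large), the numerator tends to $\ch(k\sqrt{2s}/\sigma)$ and the denominator to $\ch(\sqrt{2s}/\sigma)$, which gives the claimed limit $\ch(k\sqrt{2s}/\sigma)/\ch(\sqrt{2s}/\sigma)$---exactly the Laplace transform of the first passage time below $0$ of the symmetric Wiener process with dispersion $\sigma$ reflected at level $1$ and started at $r=1-k$.
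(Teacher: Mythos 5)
Your proposal is correct and follows essentially the same route as the paper: the same renewal decomposition at the first reflection (you merge the paper's two-equation system for $\overline v^{s}_{r}(x,m)$ and $\overline v^{s}_{B}(x,m)$ into one substituted equation, then solve the same scalar relation at $x=0$), the same specialization via Theorem~\ref{tdmg1B1} and (\ref{dmg1B9}) to evaluate $a^{k}(x)$ for $\delta\sim ge(\lambda)$, differentiation at $s=0$ for (\ref{dmg1B28}), and the scaling limits (\ref{dmg1B19})--(\ref{dmg1B30}) for part (iii). The only difference is bookkeeping: the paper records the explicit intermediate expressions for $a^{k}(x)$, $1-A(0)$ and $\lambda^{B}+(1-\lambda)\sum_{i=1}^{B}\lambda^{i-1}V_{B-i}(0,s)$, which you describe but do not write out.
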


\begin{proof}
Let us  verify  the formula  (\ref{dmg1B26}).
It  follows from the definition of the process
$\overline D^{B}_{r}(x,t)$ (\ref{dmg1B25}), the total probability law
and the Markov property of $\chi,$ $\eta_{n}(x)$  that the following
system of the linear integral equations holds
\begin{align*}
&  \overline v^{s}_{r}(x,m) =
   V_{r}(x,m,s)+\int_{0}^{\infty}
   V^{k}(x,dl,s) \overline v^{s}_{B}(l,m),\quad k=B-r, \\
&  \overline v^{s}_{B}(x,m)=
   \tilde f_{x}(s) \bold P[\delta=m+B]
   +\tilde f_{x}(s)\sum_{r=1}^{B}
   \bold P[\delta=r]\overline v_{B-r}^{s}(0,m).
\end{align*}
This system is similar to a system of linear equations with two unknowns and
can be solved analogously. Substituting  the  expression  for
$ \overline v^{s}_{B}(x,m) $ from the second equation into the first one,
we find that
\begin{align*}
   \overline v^{s}_{r}(x,m) = V_{r}(x,m,s)+
   a^{k}(x)\bold P[\delta=m+B]+a^{k}(x)
    \sum_{r=1}^{B} \bold P[\delta=r] \overline v_{B-r}^{s}(0,m).
\end{align*}
Letting  $x=0 $ in the latter equation after calculations yields
\begin{align*}
&    \sum_{r=1}^{B} \bold P[\delta=r] \overline v_{B-r}^{s}(0,m)=
     - \bold P[\delta=m+B] \\
&    + \left[\bold P[\delta=m+B]+ \sum_{k=1}^{B}
    \bold P[\delta=k]  V_{B-k}(0,du,s)\right](1-A(0))^{-1}.
\end{align*}
Inserting the right-hand side of this quality in the previous one,
we get (\ref{dmg1B26}).
In case  when $\delta\sim ge(\lambda) $ the  formula (\ref{dmg1B26})
takes  a more simple  form. The  first formula of (\ref{dmg1B12})
and (\ref{dmg1B26}) imply that  $ \overline T_{r}^{B}(x) \sim ge(\lambda)$
for any  $r\in\overline{0,B}.$
Summing over $m\in\mathbb{N}$   both  sides of  (\ref{dmg1B26}),  we find
for the function
$
\overline v^{s}_{r}(x)= \bold E e^{-s\overline{\tau}^{B}_{r}(x)}
$
that
\begin{align}                                    \label{dmg1B29}
    \overline v^{s}_{r}(x)= V_{r}(x,s)
    +\frac{a^{B-r}(x)}{1-A(0)}
    \left[\lambda^{B} + (1-\lambda)\sum_{i=1}^{B}\lambda^{i-1}
    V_{B-i}(0,s) \right].
\end{align}
Now we calculate
$ a^{k}(x),$  $ A(0) $ in case  when  $ \delta\sim ge(\lambda).$
Employing  the formulae (\ref{dmg1B9}), (\ref{dmg1B12}) and
performing the necessary calculations  we find that
\begin{align*}
&      a^{k}(x)=
      \tilde f_{x}(s)+ (1-\tilde f(s))S^{s}_{k-1}(x)
      -  \frac{Q_{k}^{s}(x)}{\bold E\,Q_{\delta+B}^{s}}
      \left[\tilde f(s)+
      (1-\tilde f(s))\bold ES_{\delta+B-1}^{s}\right],\\
&      1-A(0)= \frac{1-\lambda}{\lambda \bold E\,Q_{\delta+B}^{s}}
      \left[1-Q_{0}^{s}(0) \right]
      \left[\tilde f(s)+
       (1-\tilde f(s))\bold ES_{\delta+B-1}^{s}\right],\\
&        \lambda^{B} + (1-\lambda)\sum_{i=1}^{B}\lambda^{i-1}
      V_{B-i}(0,s)= \frac{1-\lambda}{\lambda \bold E\,Q_{\delta+B}^{s}}
      \left[1-Q_{0}^{s}(0) \right].
\end{align*}
Substituting the right-hand sides of these equalities into (\ref{dmg1B29})
and  taking  into account  that $ \overline T_{r}^{B}(x) \sim ge(\lambda),$
we  derive  (\ref{dmg1B27}).

The limiting  formulae  (\ref{dmg1B19})  were
obtained  in \cite{3Ka1}. Similarly (see \cite{3Ka1}) we derive for all
$k>0$ that
\begin{align}                                       \label{dmg1B30}
   \lim_{B\to\infty}
    {B^{-2}}\,S _{[kB]}^{s/B^{2}}(x)=
    \frac{1}{s\bold E\eta}\left(\ch\left
    (k\sqrt{2s}/\sigma\right)-1\right)=
    \lim_{B\to\infty}
    {B^{-2}}\,\bold E S_{[kB]+\delta}^{s/B^{2}}.
\end{align}
Letting  $ B\to\infty,$ we have
$ \tilde f_{x}(s/B^{2})=1-\bold E\eta_{x}s/B^{2}+o(s/B^{2}),$
which implies   for $ r\in(0,1)$ that
\begin{align*}
    \lim_{B\to\infty} \bold E e^{-{s}
    \overline{\tau}^{B}_{[rB]}(x)/B^{2}}=
              \frac{1+(\ch\left(k\sqrt{2s}/\sigma)-1)\right)}
    {1+(\ch\left(\sqrt{2s}/\sigma)-1)\right)}=
    \frac{\ch\left(k\sqrt{2s}/\sigma\right)}
    {\ch\left(\sqrt{2s}/\sigma\right)}, \quad k=1-r.
\end{align*}
The equality  (\ref{dmg1B28})   follows  from  the  following
$
\bold E\overline{\tau}^{B}_{r}(x)=
-\left.\frac{d}{ds}\overline v^{s}_{r}(x)\right|_{s=0}.
$
\end{proof}

\subsection{Increments  of the process  reflected in its supremum}

Define
$
   \overline D^{k}_{0}(x,t)= D_{x}(t)
   -\max\left\{0,\sup_{[0,t]}\limits D_{x}(\cdot)-k \right\}
   \in \overline{-\infty,k},
$
the  process reflected  from the upper  boundary  $ k\in\mathbb{Z}^{+}$
generated by its  supremum.
\begin{theorem}    \label{tdmg1B4}

Let  $ \{\overline D^{k}_{0}(x,t)\}_{t\ge0}$  be the   process
reflected from the upper boundary and
$
   \overline p_{k}^{s}(x,u)=
   \bold P\left[\overline D^{k}_{0}(x,\nu_{s})\le u \right], $
   $ u\in\overline{-\infty,k}
$
be the distribution of its increments on the exponential  interval
$ [0,\nu_{s}].$ Then
\begin{itemize}
\item[\rm{(i)}]
for all $ k\in\mathbb{Z}^{+}$ $\overline p_{k}^{s}(x,k)=1;$ and  for
$u\in\overline{-\infty,k-1}, $ $ x\ge0 $
the  following  equality  holds
\begin{align}                                       \label{dmg1B31}
    \overline p_{k}^{s}(x,u)=
    A_{x}^{u}(s)+c(s)^{k-u-1}F(s)
    \left( \frac{\tilde f_{x}(s)}{1-\tilde f(s)}
    +S_{k-1}^{s}(x)\right),
\end{align}
where   $ A_{x}^{u}(s)=0,$ for $u<0,$
$ F(s)=s(1-c(s))/(1-\lambda)(s-k(c(s))); $
\item[\rm{(ii)}]
under the  conditions $(A)$
for $ k>0,$ $ u\le k$  the  following  relation is  valid
\begin{align}                                      \label{dmg1B32}
    \lim_{B\to\infty}
    \bold P \left[\overline D^{[kB]}_{0}(x,tB^{2})\le [uB] \right]=
    1-\frac{1}{\sigma\sqrt{2\pi t}}
    \int_{u}^{2k-u}e^{-v^{2}/2\sigma^{2}t}dv;
\end{align}
\item[\rm{(iii)}]
if $ \rho>1,$
then the   ergodic  distribution
$
    p_{k}(u)=\lim_{t\to\infty}\limits
    \bold P \left[\overline D^{k}_{0}(x,t)\le u \right]
$
exists,
$$
     p_{k}(u)=
     \frac{\bold E\varkappa}{\rho}
     \frac{1-c}{1-\bold Ec^{\varkappa}}\,c^{k-u-1},
     \quad u\in\overline{-\infty, k-1}, \qquad
     c=\lim_{s\to 0}\limits c(s)\in(\lambda,1).
$$
\end{itemize}
\end{theorem}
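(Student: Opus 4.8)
The idea is to obtain (i) from a first--passage decomposition of the reflected path, and then to deduce (ii) and (iii) from \eqref{dmg1B31} by rescaling and by letting $s\downarrow0$.

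\emph{(i).} Split the path of $\overline D^{k}_{0}(x,\cdot)$ at the first crossing time $\tau^{k}(x)$ of the level $k$ by $\{D_{x}(t)\}$. On $\{\tau^{k}(x)\ge\nu_{s}\}$ the supremum of $D_{x}$ over $[0,\nu_{s}]$ does not exceed $k$, hence $\overline D^{k}_{0}(x,t)=D_{x}(t)$ for $t\le\nu_{s}$ and this event contributes exactly $\mathfrak{E}^{+}_{k}(x,u,s)$ of Lemma~\ref{ldmg1B4}(ii). On $\{\tau^{k}(x)<\nu_{s}\}$ the process $D_{x}$ is nondecreasing between $\tau^{k}(x)$ and the next renewal epoch, so $\overline D^{k}_{0}(x,\cdot)\equiv k$ there; by the Markov property of $\{X_{t}\}$ that epoch occurs after an extra time $\eta_{l}$ with $l=\eta^{k}(x)$, at which the reflected process jumps to $k-\delta$, and by homogeneity of $\{X_{t}\}$ in the first coordinate together with the lack of memory of $\nu_{s}$ the later evolution is a probabilistic copy of $(k-\delta)+\overline D^{\delta}_{0}(0,\cdot)$. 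Since for $u\le k-1$ the constant piece at level $k$ never contributes, this gives the renewal equation
\begin{align*}
   \overline p^{s}_{k}(x,u)=\mathfrak{E}^{+}_{k}(x,u,s)
   +\int_{0}^{\infty}f^{k}(x,dl,s)\,\tilde f_{l}(s)\,
   \bold E_{\delta}\!\left[\overline p^{s}_{\delta}(0,u-k+\delta)\right],\qquad u\le k-1,
\end{align*}
where $f^{k}(x,dl,s)=\sum_{m\ge1}f^{k}(x,dl,m,s)$ is furnished by Lemma~\ref{ldmg1B3} and \eqref{dmg1B9}. Passing to the generating function $\overline{\mathbb E}^{s}_{\theta}(x,z)=\sum_{k}\theta^{k}\bold E[z^{\overline D^{k}_{0}(x,\nu_{s})}]$ (for $|z|\ge1$, $|\theta|<1$) turns this into a functional equation whose kernel is that of Lemma~\ref{ldmg1B1}; exactly as in the proof of Lemma~\ref{ldmg1B4}, the left side is analytic in $|\theta|<1$ whereas the right side has a simple zero at $\theta=c(s)/z$, and forcing the numerator to vanish there fixes the unknown boundary term. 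Comparing coefficients of $\theta^{k}$ and then of $z^{u}$ --- and using $\sum_{i\ge0}\lambda^{i}A^{i}_{0}(s)=0$ (from the proof of Theorem~\ref{tdmg1B2}), which collapses the $\delta$--average --- gives \eqref{dmg1B31}. Equivalently, one may verify directly that the right side of \eqref{dmg1B31} solves the renewal equation, the only nonroutine ingredient being the evaluation of $\int_{0}^{\infty}f^{k}(x,dl,s)\tilde f_{l}(s)$ from \eqref{dmg1B9}.

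\emph{(ii).} By the change of variables $t\mapsto tB^{2}$, $k\mapsto[kB]$, $u\mapsto[uB]$, $s\mapsto s/B^{2}$,
\begin{align*}
   \int_{0}^{\infty}e^{-st}\,\bold P\!\left[\overline D^{[kB]}_{0}(x,tB^{2})\le[uB]\right]dt
   =\frac1s\,\overline p^{\,s/B^{2}}_{[kB]}(x,[uB]),
\end{align*}
so I pass to the limit $B\to\infty$ in \eqref{dmg1B31} using \eqref{dmg1B19} and \eqref{dmg1B30}. The delicate term is $F(s/B^{2})\tilde f_{x}(s/B^{2})/(1-\tilde f(s/B^{2}))$: from $1-c(s/B^{2})\sim B^{-1}\sqrt{2s}/\sigma$ and the defining relation $c(s)-\lambda=(1-\lambda)\tilde f(s-k(c(s)))$ one gets $s/B^{2}-k(c(s/B^{2}))\sim B^{-1}\sqrt{2s}/\bigl(\sigma(1-\lambda)\bold E\eta\bigr)$, hence $F(s/B^{2})\sim s\bold E\eta/B^{2}$, which cancels the $B^{2}/(s\bold E\eta)$ blow-up of $1/(1-\tilde f(s/B^{2}))$ and leaves limit $1$. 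Together with $c(s/B^{2})^{[kB]-[uB]-1}\to e^{-(k-u)\sqrt{2s}/\sigma}$, $F(s/B^{2})S^{s/B^{2}}_{[kB]-1}(x)\to\ch(k\sqrt{2s}/\sigma)-1$ and $A^{[uB]}_{x}(s/B^{2})\to1-\ch(u^{+}\sqrt{2s}/\sigma)$ with $u^{+}=\max(0,u)$, the limit of $\overline p^{\,s/B^{2}}_{[kB]}(x,[uB])$ equals $1-\ch(u^{+}\sqrt{2s}/\sigma)+e^{-(k-u)\sqrt{2s}/\sigma}\ch(k\sqrt{2s}/\sigma)$, a closed combination of $e^{\pm a\sqrt{2s}/\sigma}$. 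Inverting with the L\'evy identity $\tfrac1s e^{-a\sqrt{2s}/\sigma}=2\int_{0}^{\infty}e^{-st}\bold P[w_{t}\ge a]\,dt$ and the symmetry of $w$ identifies this with $1-\bold P[u\le w_{t}<2k-u]$, i.e.\ with the distribution function of the Wiener process of dispersion $\sigma$ reflected in its supremum at level $k$; this is \eqref{dmg1B32}, and the convergence of the transforms (hence of the distribution functions at their continuity points) gives the asserted weak convergence.

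\emph{(iii).} Let $s\downarrow0$ in \eqref{dmg1B31}. Since $\rho>1$, Lemma~\ref{ldmg1B1} gives $c(s)\to c\in(\lambda,1)$; also $\tilde\rho_{i}(0)=\lim_{s\to0}s\int_{0}^{\infty}e^{-st}\bold P[\pi(t)=i]\,dt=0$ because $\bold P[\pi(t)=i]\to0$, so $A^{u}_{x}(s)\to0$, while $S^{s}_{k-1}(x)$ stays bounded and $F(s)\to0$; hence only $c^{\,k-u-1}\lim_{s\to0}F(s)\tilde f_{x}(s)/(1-\tilde f(s))$ survives. Using $1-\tilde f(s)\sim s\bold E\eta$, $k(c)=\mu(\bold Ec^{\varkappa}-1)$ and $\rho=\mu(1-\lambda)\bold E\varkappa\,\bold E\eta$ this limit equals $\dfrac{\bold E\varkappa}{\rho}\dfrac{1-c}{1-\bold Ec^{\varkappa}}$. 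The ergodic limit exists by positive recurrence of the Markov pair $\bigl(\overline D^{k}_{0}(x,t),\eta^{+}_{x}(t)\bigr)$ when $\rho>1$, and an Abelian theorem identifies it with $\lim_{s\to0}\overline p^{s}_{k}(x,u)$, giving $p_{k}(u)=\dfrac{\bold E\varkappa}{\rho}\dfrac{1-c}{1-\bold Ec^{\varkappa}}\,c^{\,k-u-1}$. I expect the main obstacle to be step (i) --- controlling the self--referential $\delta$--term and the algebra that closes the recursion; parts (ii) and (iii) are then essentially bookkeeping with \eqref{dmg1B19}--\eqref{dmg1B30}.
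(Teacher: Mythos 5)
Your proposal is correct and takes essentially the same route as the paper: the same three-way path decomposition at the first crossing of the level $k$ (your renewal equation is the distribution-function form of the paper's equation \eqref{dmg1B33}), the same key ingredient $\int_{0}^{\infty}f^{k}(x,dl,s)\tilde f_{l}(s)$ computed from \eqref{dmg1B9}, the same scaling limits \eqref{dmg1B19}, \eqref{dmg1B30} with the L\'evy inversion identity for part (ii), and the same regenerative-existence plus $s\to0$ (Abelian/Tauberian) identification for part (iii), including the correct evaluation of $\lim_{s\to0}F(s)\tilde f_{x}(s)/(1-\tilde f(s))$. The only cosmetic difference is that you fix the unknown $\delta$-averaged boundary term by introducing an additional generating function in $\theta$ and cancelling the root at $\theta=c(s)/z$, whereas the paper sets $x=0$ in the same equation and solves the resulting scalar relation for $(1-\lambda)\sum_{i\ge1}\lambda^{i-1}z^{-i}\overline P_{i}^{s}(0,z)$ directly; the two devices are equivalent.
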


\begin{proof}
Define the generating function distribution  of the process
$$
   \overline P_{k}^{s}(x,z)=\bold E z^{\overline D^{k}_{0}(x,\nu_{s})}=
   \sum_{-\infty}^{k}z^{i}
   \bold P\left[\overline D^{k}_{0}(x,\nu_{s})=i\right],
   \quad |z|\ge 1,\;k\in\mathbb{Z}^{+}.
$$
It is  obvious that
$ \overline P_{k}^{s}(x,1) =
\bold P\left[\overline D^{k}_{0}(x,\nu_{s})\le k\right]=1.$
In accordance with the total probability law and  the definition of
the process $\overline D^{k}_{0}(x,t) $
we can write
\begin{align}                                      \label{dmg1B33}
     \overline P_{k}^{s}(x,z)
&    = E_{k}^{+}(x,z,s)+ z^{k}\int_{0}^{\infty}
     f_{x}^{k}(dl)(1-\tilde f_{l}(s)) \notag\\
&    + z^{k}\int_{0}^{\infty}f_{x}^{k}(dl)\tilde f_{l}(s)
     (1-\lambda)\sum_{i=1}^{\infty}\lambda^{i-1} z^{-i}
     \overline P_{i}^{s}(0,z),
\end{align}
where  the  function
$
    E_{k}^{+}(x,z,s)=\bold E \left[e^{-zD_{x}(\nu_{s})};
    \tau^{k}(x)>\nu_{s}\right]
$
is given  by  (\ref{dmg1B14}), and the  function
$  f^{k}_{x}(dl)=\bold E\left[e^{-s\tau^{k}(x)};\eta^{k}(x)\in dl \right] $
is determined  by (\ref{dmg1B9}). This  equation means  the  following.
The sample  paths  on which the  increments of the process  $ D_{x}(t)$
occur  can be decomposed  into  three  types:
1)  the  sample  paths which  do not intersect the   upper boundary  $k$
(the first  term of the right-hand side);
2)  the sample paths which do intersect
the  upper boundary and   stay there (the  second term);
3) the sample paths   cross the upper   boundary and
 then they are reflected (the third term).
After some   calculation  which we  skip, one can see  that the formula
(\ref{dmg1B9}) implies  that
\begin{align*}
   \int_{0}^{\infty}f_{x}^{k}(dl)\tilde f_{l}(s)=
   \tilde f_{x}(s)+(1-\tilde f(s))S_{k}^{s}(x)-
   \frac{1-\tilde f(s)}{1-c(s)}\,Q_{k}^{s}(x).
\end{align*}
Letting   $ x=0 $ in (\ref{dmg1B33}) and taking into account the latter
equality, we derive
\begin{align*}
     (1-\lambda)\sum_{i=1}^{\infty}\lambda^{i-1} z^{-i}
     \overline P_{i}^{s}(0,z)=
   \frac{F(s)}{1-\tilde f(s)} \,
   \frac{1/z-1}{1-c(s)/z},\qquad |z|\ge 1.
\end{align*}
Substituting the right-hand of this equality and the expression
(\ref{dmg1B14}) for $ E_{k}^{+}(x,z,s) $ into (\ref{dmg1B33}), we find that
$( |z|\ge 1)$
\begin{align*}
    \overline P_{k}^{s}(x,z)
    =z^{k}+ (1-z)\sum_{i=0}^{k-1}z^{i}A_{x}^{i}(s)
    +z^{k}F(s)\frac{1/z-1}{1-c(s)/z}\,
     \left( \frac{\tilde f_{x}(s)}{1-\tilde f(s)}+ S_{k-1}^{s}(x)\right).
\end{align*}
Comparing  the coefficients  of  $z^{i},$ $ i\in\{k,k-1,\dots\}, $
we get
\begin{align*}
&     \bold P\left[\overline D^{k}_{0}(x,\nu_{s})=k\right]
    =  1- A_{x}^{k-1}(s)- F(s)
     \left(\frac{\tilde f_{x}(s)}{1-\tilde f(s)}+S_{k-1}^{s}(x)\right),\\
&     \bold P\left[\overline D^{k}_{0}(x,\nu_{s})=i\right]
    =  A_{x}^{i}(s)- A_{x}^{i-1}(s)+ \\
&    + F(s) c(s)^{k-i-1}(1-c(s))
     \left( \frac{\tilde f_{x}(s)}{1-\tilde f(s)}+ S_{k-1}^{s}(x)\right),
     \qquad i<k.
\end{align*}
One can see that the second formula implies the equality (\ref{dmg1B31}) of the
theorem. Let us verify (\ref{dmg1B32}). It follows from the first formula of
(\ref{dmg1B19}) that
\begin{align*}
    F(s/B^{2})=\frac{s}{B^{2}}\,\bold E\eta+o(B^{-2}),\qquad
    \lim_{B\to\infty}c(s/B^{2})^{[B(k-u)]-1}=e^{-(k-u)\sqrt{2s}/\sigma}.
\end{align*}
Denote
$
   \tilde p_{k}^{t}(x,u,B)=
   \bold P \left[\overline D^{[kB]}_{0}(x,tB^{2})\le [uB] \right],
$
$ k>0.$  Then (\ref{dmg1B19}), (\ref{dmg1B30})
\begin{align*}
    \lim_{B\to\infty}
&   \int_{0}^{\infty}e^{-st}\tilde p_{k}^{t}(x,u,B)\,dt=
     \frac{1}{s}\lim_{B\to\infty} \overline p_{[kB]}^{s/B^{2}}(x,[uB])= \\
&    =\frac{1}{s}\left(1-\ch\left(u^{+}\sqrt{2s}/\sigma\right) +
     e^{-(k-u)\sqrt{2s}/\sigma}\ch\left(k\sqrt{2s}/\sigma\right)\right)=\\
&    =\frac{1}{s}\, \bold I_{\{u<0\}}\left(e^{u\sqrt{2s}/\sigma}/2
    +e^{-(2k-u)\sqrt{2s}/\sigma}/2\right)+\\
&   + \frac{1}{s}\, \bold I_{\{u\in[0,k]\}}
    \left(1-e^{-u\sqrt{2s}/\sigma}/2+e^{-(2k-u)\sqrt{2s}/\sigma}/2\right),
    \quad u\le k,
\end{align*}
where $u^{+}=\max\{0,u\}.$  Employing the formula
$
    \frac{1}{s}\,e^{-a\sqrt{2s}/\sigma}=
    2\int_{0}^{\infty}e^{-st} \bold P\left[w_{t}\ge a\right]dt,
$
to invert  the Laplace transform, we  derive the limiting  equality of the
theorem.

For $ \rho>1 $  the  mathematical  expectation of
$ \tau^{k}(x) $ is finite. It follows from  (\ref{dmg1B10})  that
$$
     \bold E \tau^{k}(x) = \frac{Q_{k}(x)}{(1-\lambda) k(c)}+
        \sum_{i=0}^{k}\rho_{i}\,
   \left[1-\frac{Q_{k-i}(x)}{1-\lambda}\right]<\infty,
$$
where
$
  \rho_{i}=\lim_{s\to 0}\limits s^{-1}\tilde\rho_{i}(s)=
  \int_{0}^{\infty}\bold P\left[\pi(t)=i\right]dt<\infty.
$
Moreover,   the process  $ \overline D^{k}_{0}(x,t) $  is of  regenerative type
 \cite{ShK1}.  The instants if the   passages of the  upper  boundary are the
regeneration times. Hence, \cite{ShK1}  there  exists  the
ergodic distribution  of the process
$
    p_{k}(u)=\lim_{t\to\infty}\limits
    \bold P \left[\overline D^{k}_{0}(x,t)\le u \right].
$
To determine  this distribution,  it suffices  to  apply  to   (\ref{dmg1B31})
the Tauberian  theorem:
$
    p_{k}(u)=\lim_{s\to 0}\limits \overline p_{k}^{s}(x,u).
$
\end{proof}

Let  $ \{\overline D^{k}_{0}(x,t)\}_{t\ge0}$  be  the process
reflected from the  upper  boundary.   Define   for  $ r,k \in\mathbb{Z}^{+}$
\begin{align*}
   \overline{\tau}_{r,k}(x) =
   \inf\{t:\overline D^{k}_{0}(x,t)<-r\}
   \stackrel{\rm def}{=}\overline{\tau},\qquad
   \overline{T}_{r,k}(x)=
   -\overline{D}^{k}_{0}(x,\overline{\tau})-r
   \stackrel{\rm def}{=} \overline{T},
\end{align*}
the  first exit time    from the interval $ [-r,k] $ by the  process
$ \overline D^{k}_{0}(x,t) $ and the  value  of the overshoot  through
the  lower boundary  $-r.$
Since  $ X_{t} $  is homogeneous  with respect to the  first  component,
then    $ \{\overline{\tau}_{r,k}(x),\overline{T}_{r,k}(x)\} $
are  identically distributed   as
$ \{\overline{\tau}_{r}^{B}(x),\overline{T}_{r}^{B}(x)\}, $
$ B=k+r $ and  their joint  distribution is  determined  by  (\ref{dmg1B27}).

\begin{theorem}    \label{tdmg1B5}

Let $ \{\overline D^{k}_{0}(x,t)\}_{t\ge0}$ be the process reflected from the
upper boundary,
$
   \overline p_{r,k}^{s}(x,u)=
   \bold P\left[\overline D^{k}_{0}(x,\nu_{s})\le u;\;
   \overline{\tau}_{r,k}(x)>\nu_{s} \right], $
   $ u\in\overline{-r,k}
$
be the distribution of the increments  of the process  on the interval
$ [0,\nu_{s}]$ on the  event $ \{\overline{\tau}_{r,k}(x)>\nu_{s}\}.$
\begin{itemize}
\item[\rm{(i)}]
the distribution of the increments is such that
for all $ r,k\in\mathbb{Z}^{+},$
\begin{align}                                      \label{dmg1B34}
&       \overline p_{r,k}^{s}(x,k)=1-
       \frac{\tilde f_{x}(s)+
       (1-\tilde f(s))S^{s}_{k-1}(x)}
       {\tilde f(s)+ (1-\tilde f(s))
       \bold E\,S^{s}_{\delta+B-1}},\quad B=k+r,  \\
&      \overline p_{r,k}^{s}(x,u)=A_{x}^{u}(s)-
       \frac{\tilde f_{x}(s)+
       (1-\tilde f(s))S^{s}_{k-1}(x)}
       {\tilde f(s)+ (1-\tilde f(s))
       \bold E\,S^{s}_{\delta+B-1}}\,
       \bold E A_{0}^{\delta+u+r}(s),\; u\in\overline{-r,k-1};\notag
\end{align}
\item[\rm{(ii)}]
under the conditions  $(A)$ the following
limiting equality holds
\begin{align}                                       \label{dmg1B35}
     \lim_{B\to\infty}
&    \bold P \left[\overline D^{[kB]}_{0}(x,tB^{2})\le [uB];\;
     \overline{\tau}_{[rB],[kB]}(x)>tB^{2} \right]
     \stackrel{\rm def}{=} p(t)=\\
=&    \frac{4}{\pi}\,
     \sum_{n\in\mathbb{Z}^{+}}
     \frac{e^{-\frac{t}{2}(\pi(n+\frac{1}{2})\sigma)^{2}}}
     {n+\frac{1}{2}}\,
     \sin \left(r\left( n+\frac{1}{2}\right)\pi\right)
     \sin^{2}\left(\frac{r+u}{2}\left(n+\frac{1}{2}\right)\pi\right), \notag
\end{align}
where $ r\in(0,1), $ $k=1-r, $ $u\in[-r,k].$
\end{itemize}
\end{theorem}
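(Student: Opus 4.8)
The plan is to set up a decomposition equation for the generating function of the joint distribution of $\{\overline D^{k}_{0}(x,\nu_{s}),\,\overline\tau_{r,k}(x)\}$ exactly as was done for the non-stopped reflected process in Theorem \ref{tdmg1B4}, but now keeping track of the event $\{\overline\tau_{r,k}(x)>\nu_{s}\}$, i.e. that the process has not dropped below $-r$. Writing $\overline R_{r,k}^{s}(x,z)=\bold E\big[z^{\overline D^{k}_{0}(x,\nu_{s})};\overline\tau_{r,k}(x)>\nu_{s}\big]$, the total probability law and the homogeneity of $X_{t}$ with respect to the first component give the analogue of (\ref{dmg1B33}): the paths split into those that never hit the upper level $k$ and stay above $-r$ (which by Theorem \ref{tdmg1B2}, formula (\ref{dmg1B20}), is exactly $E^{s}_{r,k}(x,z)$ of (\ref{dmg1B23})); those that hit $k$ and remain there without dropping below $-r$; and those that hit $k$, are reflected to $k-\delta$, and then continue as a probabilistic copy on $[0,\nu_{s}]$, provided the reflection level $k-\delta$ still lies above $-r$, i.e. $\delta\le k+r=B$. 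This yields a single scalar equation of the form
\begin{align*}
   \overline R_{r,k}^{s}(x,z)=E^{s}_{r,k}(x,z)+z^{k}\!\int_{0}^{\infty}\! f^{k}_{x}(dl)(1-\tilde f_{l}(s))
   +z^{k}\!\int_{0}^{\infty}\! f^{k}_{x}(dl)\tilde f_{l}(s)(1-\lambda)\!\!\sum_{i=1}^{B}\!\lambda^{i-1}z^{-i}\overline R_{r,k-i+\,?}^{s}(0,z),
\end{align*}
where the reflected sub-problems again all reduce, by homogeneity, to the $x=0$ versions with interval-width $B$; one then sets $x=0$, solves for the single unknown generating function of the reflected copies (as in the step producing $(1-\lambda)\sum\lambda^{i-1}z^{-i}\overline P^{s}_{i}(0,z)$ in the proof of Theorem \ref{tdmg1B4}), and substitutes back.

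Next I would use the already-computed ingredients to finish the algebra. Two pieces are needed: the integral $\int_{0}^{\infty}f^{k}_{x}(dl)\tilde f_{l}(s)=\tilde f_{x}(s)+(1-\tilde f(s))S^{s}_{k}(x)-\frac{1-\tilde f(s)}{1-c(s)}Q^{s}_{k}(x)$, which was established inside the proof of Theorem \ref{tdmg1B4}; and the generating-function identity (\ref{dmg1B23}) for $E^{s}_{r,k}(x,z)$ together with the $\mathfrak E$-representation $\sum_{u=-r}^{k}z^{u}\mathfrak E^{s}_{r,k}(x,u)=\frac{1}{1-z}\big(E^{s}_{r,k}(x,z)-z^{k+1}\bold P[\chi^{B}_{r}(x)>\nu_{s}]\big)$ from the proof of Theorem \ref{tdmg1B2}. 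Collecting $z^{u}$-coefficients and using $\bold P[\overline\tau_{r,k}(x)>\nu_{s}]=\bold E[e^{-s\overline\tau^{B}_{r}(x)}]$ known from (\ref{dmg1B27}) (recall $\{\overline\tau_{r,k}(x),\overline T_{r,k}(x)\}\stackrel{d}{=}\{\overline\tau^{B}_{r}(x),\overline T^{B}_{r}(x)\}$) should produce (\ref{dmg1B34}): the $u=k$ line is precisely $1-\bold P[\overline\tau^{B}_{r}(x)>\nu_{s}]$ written via (\ref{dmg1B27}), and the $u<k$ line combines $A^{u}_{x}(s)$ with the product of the same ratio and the sum $\bold E A^{\delta+u+r}_{0}(s)$, where the $\check A^{B}_{0}(s,z)$ bookkeeping and the identity $\sum_{i\ge0}\lambda^{i}A^{i}_{0}(s)=0$ (used already in the proof of Theorem \ref{tdmg1B2}) make the truncated sums collapse. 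I expect this to be essentially a repetition of the manipulations in the proofs of Theorems \ref{tdmg1B2} and \ref{tdmg1B4}, so I would present it compactly, flagging the one genuinely new point: the reflection sum is truncated at $\delta\le B$, which is exactly why the answer is governed by $\bold E S^{s}_{\delta+B-1}$ and $\bold E A^{\delta+u+r}_{0}(s)$ rather than their untruncated analogues.

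For part (ii) the scheme is the standard diffusion scaling. I would substitute $s\mapsto s/B^{2}$, $k\mapsto[kB]$, $r\mapsto[rB]$, $u\mapsto[uB]$ in (\ref{dmg1B34}), take $B\to\infty$ using the limiting relations (\ref{dmg1B19}), (\ref{dmg1B30}) (namely $c(s/B^{2})^{B\cdot a}\to e^{-a\sqrt{2s}/\sigma}$, $B^{-1}Q^{s/B^{2}}_{[aB]}\to \frac{2\sinh(a\sqrt{2s}/\sigma)}{\sigma\sqrt{2s}\,\bold E\eta}$, $B^{-2}S^{s/B^{2}}_{[aB]}\to\frac{1}{s\bold E\eta}(\cosh(a\sqrt{2s}/\sigma)-1)$, $A^{[aB]}_{x}(s/B^{2})\to1-\cosh(a\sqrt{2s}/\sigma)$), and obtain a closed form for $\lim_{B\to\infty}\frac{1}{s}\overline p^{\,s/B^{2}}_{[rB],[kB]}(x,[uB])$, which is the Laplace transform in $t$ of $p(t)$. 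With $k+r=1$ the hyperbolic-function ratio has the form $1-\frac{\cosh(u^{+}\sqrt{2s}/\sigma)}{\cdot}+\frac{\cosh(k\sqrt{2s}/\sigma)}{\cosh(\sqrt{2s}/\sigma)}\cdot(\text{stuff})$; this is precisely the Laplace transform of the absorption probability for a symmetric Wiener process killed at $-r$ and reflected at $k$, and its inversion is a classical computation giving the $\sum_{n\ge0}$ series in (\ref{dmg1B35}) with the half-integer eigenvalues $\pi(n+\tfrac12)\sigma$ coming from the mixed reflecting/absorbing boundary conditions (reflecting at $k$, absorbing at $-r$). The hard part will be the last step — justifying the termwise inversion of the Laplace transform to get the explicit Fourier-type series, and checking that the limit interchange (Laplace inversion $\leftrightarrow$ $B\to\infty$) is legitimate; I would handle this exactly as in the analogous computation of (\ref{dmg1B21}) in Theorem \ref{tdmg1B2}, citing \cite{TKad3} for the eigenfunction expansion of $e^{*}(s)$ with these boundary conditions and invoking the continuity theorem for Laplace transforms to upgrade pointwise convergence of transforms to weak convergence of the distributions.
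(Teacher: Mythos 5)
Your treatment of part (i) has two concrete gaps. First, in your decomposition at the first passage of the upper level $k$, the kernel cannot be the one-boundary functional $f^{k}_{x}(dl)$: on the event $\{\overline\tau_{r,k}(x)>\nu_{s}\}$ a path that reaches $k$ must not have dropped below $-r$ beforehand, so the second and third terms of your equation must be driven by the two-boundary exit functional $V^{k}(x,dl,s)$ of Theorem \ref{tdmg1B1} (exit of $[-r,k]$ through the upper boundary); with $f^{k}_{x}(dl)$ the right-hand side also counts paths that are already killed, and the displayed identity is false. Second, after a reflection from $k$ to $k-i$ the remaining sub-problem has the reflecting boundary at distance $i$ and the killing level at distance $B-i$ from the new position, i.e. it is $z^{k-i}\,\overline R^{s}_{B-i,i}(0,z)$: both indices move, so you are not back to the single unknown of the proof of Theorem \ref{tdmg1B4} but to the whole family $\{\overline R^{s}_{B-i,i}(0,\cdot)\}_{i=1}^{B}$, and closing the system requires writing the $x=0$ equation for every splitting of $B$ and summing with the weights $(1-\lambda)\lambda^{i-1}z^{-i}$ --- precisely the step your sketch leaves open (the unresolved index ``$?$''). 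Until these two points are repaired, the claimed passage to (\ref{dmg1B34}) is not justified.

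The paper avoids all of this: it decomposes the already known, unstopped quantity of Theorem \ref{tdmg1B4} at the killing time,
\begin{align*}
\overline p^{s}_{k}(x,u)=\overline p^{s}_{r,k}(x,u)+\overline v^{s}_{r}(x)\,(1-\lambda)\sum_{i\ge1}\lambda^{i-1}\,\overline p^{s}_{i+B}(0,u+r+i),
\end{align*}
splitting the paths according to whether $\overline\tau_{r,k}(x)$ exceeds $\nu_{s}$; here $\overline p^{s}_{k}$ and $\overline p^{s}_{i+B}$ are given by (\ref{dmg1B31}), and $\overline v^{s}_{r}(x)$ together with the geometric overshoot by (\ref{dmg1B27}) (since $\{\overline\tau_{r,k}(x),\overline T_{r,k}(x)\}$ has the same law as $\{\overline\tau^{B}_{r}(x),\overline T^{B}_{r}(x)\}$), so $\overline p^{s}_{r,k}$ falls out by mere subtraction, with no new functional equation to set up or solve --- your correct observation that the $u=k$ line of (\ref{dmg1B34}) is just $1-\overline v^{s}_{r}(x)$ is the simplest instance of this. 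For part (ii) your plan coincides with the paper's: the scaling via (\ref{dmg1B19}), (\ref{dmg1B30}) and Theorem \ref{tdmg1B3}(iii) yields the limit transform $p^{*}(s)$, only the paper then inverts it directly by residues at the simple poles $s_{n}=-\frac{1}{2}\sigma^{2}\pi^{2}\left(n+\frac{1}{2}\right)^{2}$ (checking that $s=0$ is not singular and that the first term contributes nothing for $u\in(0,k]$) rather than appealing to a quoted eigenfunction expansion; that part is acceptable once (i) is established.
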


\begin{proof}
In accordance    with the   total probability  law, homogeneity  of
the process  $ X_{t} $ with  respect   to  the  first   component,
Markov property  of $ \overline{\tau}_{r,k}(x) $
and the  properties of the   exponential variable $ \nu_{s} $
we can  write
\begin{align*}
    \overline p_{k}^{s}(x,u)=\overline p_{r,k}^{s}(x,u)+
    \overline v^{s}_{r}(x)(1-\lambda)\sum_{i=1}^{\infty}\lambda^{i-1}
    \overline p_{i+B}^{s}(0,u+r+i), \quad u\in\overline{-\infty,k},
\end{align*}
where the function
$
    \overline p_{k}^{s}(x,u)=
    \bold P\left[\overline D^{k}_{0}(x,\nu_{s})\le u \right]
$
is determined   in Theorem  \ref{tdmg1B4}.   This   equation  means   that
increments  of the process  $ \overline D^{k}_{0}(x,\nu_{s}) $ are  realized
either on the  sample paths   which do not  exit the interval
$ [-r,k],$  or on the sample  paths  which   do  exit  the interval
and  the further evaluation of the process is its probabilistic  replica  on $ [0,\nu_{s}].$
Substituting   the  expression for the  function $ \overline p_{k}^{s}(x,u) $
into (\ref{dmg1B31})  after  necessary calculations  we  derive
(\ref{dmg1B34}).

For $ r\in(0,1),$ $k=1-r,$ $ u\in[-r,k] $ denote
$$
    \overline p_{r,k}^{t}(x,u,B)=
    \bold P \left[\overline D^{[kB]}_{0}(x,tB^{2})\le [uB];\;
    \overline{\tau}_{[rB],[kB]}(x)>tB^{2} \right].
$$
Employing the third  formula of  (\ref{dmg1B19}),
the limiting  equality of Theorem \ref{tdmg1B3},
we find 
\begin{align}                                       \label{dmg1B36}
&    \frac{1}{s}\lim_{B\to\infty}
     \overline p_{[kB]}^{s/B^{2}}(x,[uB])=
    \lim_{B\to\infty}\int_{0}^{\infty}e^{-st}
    \overline p_{r,k}^{t}(x,u,B)\,dt=\\
&    =\frac{1-\ch(u^{+}\sqrt{2s}/\sigma)}{s}+
     \frac{1}{s}\,\frac{\ch(k\sqrt{2s}/\sigma)}{\ch(\sqrt{2s}/\sigma)}
     \left(\ch((u+r)\sqrt{2s}/\sigma)-1\right)
     \stackrel{\rm def}{=} p^{*}(s), \notag
\end{align}
where   $ u^{+}=\max\{0,u\}.$  When $ u\in[-r,0] $ we  derive from this formula that
\begin{align*}
    p^{*}(s)= \frac{2}{s}\,
     \frac{\ch(k\sqrt{2s}/\sigma)}{\ch(\sqrt{2s}/\sigma)}\,
     \sh^{2}\left(\frac{r+u}{2}\sqrt{2s}/\sigma\right),\qquad  u\in[-r,0].
\end{align*}
It is  clear that $ s=0 $ is not  a singular point (pole or  point of branching) of the function
$ p^{*}(s). $  In the  semi-plane $ \Re(s)<0 $ this function has
simple poles in
$$
     s_{n}=-\frac{1}{2}\,\sigma^{2}\pi^{2}\left(n+\frac{1}{2}\right)^{2},
     \qquad n\in\mathbb{Z}^{+},
$$
and it is analytic in the whole plane apart from these points. Hence, for
$ \alpha>0 $
\begin{align*}
   p(t)=\frac{1}{2\pi i}
   \int_{\alpha-i\infty}^{\alpha+i\infty}e^{st}p^*(s)\,ds=
   \sum_{n\in\mathbb{Z}^{+}}{\rm Res}_{s=s_{n}} p^*(s).
\end{align*}
Calculating  the   residues  of the function  $ p^*(s) $ in  $ s_{n}, $  we   obtain
the right-hand  side of the formula (\ref{dmg1B35}) for $ u\in[-r,0]. $ On can see
that  the  first  term in the  right-hand side of  (\ref{dmg1B36}) is analytic
in the whole  plane for $ u\in(0,k]. $  Applying the inversion   formula,
we   find  that  the contour integral of this  term  is equal  to  zero.
The second  term of (\ref{dmg1B36}) is the same   also for $ u\in[-r,0].$
Thus,  the  formula (\ref{dmg1B35}) holds for  $ u\in[-r,k]. $
\end{proof}

\section{Applications for$ {\rm M^{\varkappa}|G^{\delta}|1|B} $ system }

Let  $B\in\mathbb{Z}^{+},$  $r\in\overline{0,B+1},$ $x\ge0.$
Introduce  the two-component  Markov process
$$
   Y_{r,x}(t)= \{d_{r,x}(t),\eta_{r,x}(t)\}
   \in\overline{0,B+1}\times \mathbb{R}_{+},\quad  Y_{r,x}(0)=(r,x)
$$
by means of the following
recurrent equations
\begin{align}                                    \label{dmg1B37}
   Y_{r,x}(t)= \left\{
   \begin{array}{l}
   \left(\overline D^{B+1}_{r}(x,t),\eta_{x}^{+}(t)\right),
   \quad  0\le t< \tilde\tau_{r}^{B+1}(x), \\
   Y_{0,0}(t-\tilde\tau_{r}^{B+1}(x)),
   \qquad t\ge\tilde\tau_{r}^{B+1}(x),
   \end{array}
   \right. ,\quad r\in\overline{1,B+1},
\end{align}
\begin{align}                                    \label{dmg1B38}
   Y_{0,0}(t)= \left\{
   \begin{array}{l}
   (0,0),
   \quad  0\le t< \tilde\mu\sim\exp(\mu), \\
   Y_{r,0}(t-\tilde\mu):\bold P[\varkappa=r],\quad r=\overline{1,B},
   \quad t\ge\tilde\mu, \\
   Y_{B+1,0}(t-\tilde\mu):\bold P[\varkappa\ge B+1],
   \quad t\ge\tilde\mu,
   \end{array}
   \right.
\end{align}
where
$ \tilde\tau_{r}^{B+1}(x)=\inf\{t:\overline D^{B+1}_{r}(x,t)<1\},$
$ r=\overline{1,B+1}.$

\begin{remark}   \label{rdmg1B1}

Since  the process  $ X_{t} $ is homogeneous  with respect to the first
component, then  the random  variable  $ \tilde\tau_{r}^{B+1}(x)$ is identically
distributed as $ \overline\tau_{r-1}^{B}(x) $ (\ref{dmg1B27}) and, hence,
$$
    \tilde v_{r}^{s}(x)=
    \bold E\left[e^{-s\tilde\tau_{r}^{B+1}(x)};
    \tilde\tau_{r}^{B+1}(x)<\infty\right]= \overline v_{r-1}^{s}(x),
    \quad r\in\overline{1,B+1}.
$$
\end{remark}

The  process $  Y_{r,x}(t)_{\{t\ge0\}} $ serves as a mathematical model of
the  functioning of ${\rm M^{\varkappa}|G^{\delta}|1|B},$ $(\delta\sim ge(\lambda))$
system, which has  the  following properties
\begin{itemize}

\item[\rm{(i)}]
The customers  arrive in groups (batch arrivals)
according to the Poisson  process  with  intensity  $\mu>0.$
The number of  the  customers   in every group is  represented  by the  random variable
$ \varkappa \in\mathbb{N}. $

\item[\rm{(ii)}]
The system has a finite  waiting  room (buffer) whose
size is equal to  $ B+1<\infty.$
Suppose that  upon the arrival of a new claim
of size $ \varkappa $ it finds $ r\in\overline{0,B+1}$ occupied space
in the waiting  room.
Then $ \min\{k,\varkappa\} $ joins the queue, and  loss of size
$ \max\{0,\varkappa-k\}$ occurs, where  $k=B+1-r$ is the size of empty space
in the waiting room;

\item[\rm{(iii)}]
The duration of service completion is arbitrary distributed as
$\eta>0. $ Suppose, that  at time $t $ the service cycle is  accomplished.
Then  the  occupied  space in the buffer is reduced  by $ \min\{r,\delta\}, $
where  $ r\in\overline{1,B+1} $ is the  value of occupied  space  in the
waiting  room at time $t-0.$  If at the instant of
the service completion $r- \min\{r,\delta\}>0, $
then a new service cycle starts.  If at the instant of the service completion
$r- \min\{r,\delta\}=0, $ then the new service cycle starts  upon arrival  of a new
 claim (after exponential time  with parameter $\mu>0 $).
\end{itemize}

For  all  $t\ge0 $ the  event $ \{Y_{r,x}(t)=(i,y)\},$ $i\in\overline{1,B+1},$
$ y\ge0 $ means that    at time  $t$  there are  $i$  occupied places  in
the waiting room,  and $y$  stands for  time  elapsed since the  beginning
of the service cycle. Here $ (r,x) $  is an initial state of the system.

The event  $ \{Y_{r,x}(t)=(0,0)\}$  means  that at time  $t$
the waiting room is empty and the server is idle.
The  system stays in the  $ (0,0) $  state for an exponential period of time
(with parameter  $ \mu.)$

Therefore,  $ d_{r,x}(t) $  is the number of the customers in the waiting room
at time $t.$ If $ d_{r,x}(t)>0,$
then  $ \eta_{r,x}(t) $  is the time elapsed  since the last  start of the
service cycle up to time t. If  $ d_{r,x}(t)=0,$ then
$\bold P [\eta_{r,x}(t)=0]=1. $

\subsection {Busy period of the system}

Assume that  at   time $ t_{0}=0 $  system   is in the state $(r,x),$
where $ r\in\overline{1,B+1}$ is the number of the customers in   the waiting   room, and
 $x\ge 0 $ is    the duration of the  current   service  cycle.
 Introduce the  random variable
$$
   b_{r}(x)=\inf\{t:d_{r,x}(t)=0 \}
$$
i.e.  the  instant  at  which  the system  for the first  time becomes  empty.
Thus, the interval
$ [0,b_{r}(x)] $  is a busy    period  of  $ (r,x)$  type.

\begin{corollary}   \label{cdmg1B1}

Let
$ b_{r}^{s}(x)=\bold E\left[e^{-s b_{r}(x)}; b_{r}(x)<\infty\right] $
be the Laplace  transform of the busy  period of type $(r,x).$
Then  the following relation holds
\begin{align}                                      \label{dmg1B39}
   b_{r}^{s}(x) =
   \frac{\tilde f_{x}(s)+
   (1-\tilde f(s))S^{s}_{B-r}(x)}
   {\tilde f(s)+ (1-\tilde f(s))
   \bold E\,S^{s}_{\delta+B-1}},\quad x\ge0,
    \quad r\in\overline{1,B+1},
\end{align}
where
$$
    S^{s}_{k}(x)=\sum_{i=0}^{k} Q_{i}^{s}(x),\quad
    \bold E\,S^{s}_{\delta+B-1}=
    (1-\lambda)\sum_{i=1}^{\infty}\lambda^{i-1}
    S^{s}_{i-1+B}(0).
$$
Observe, that the  random variable $ b_{r}(x) $
is proper  $ (\bold P\left[b_{r}(x)<\infty\right]=1 )$
and
\begin{align}                                  \label{dmg1B40}
   \bold E b_{r}(x)=
   \bold E\eta_{x}- \bold E\eta+
   \bold E\eta\left[\bold E S_{\delta+B-1}-S_{B-r}(x) \right]<\infty,
\end{align}
where
$
    S_{k}(x)=S_{k}^{0}(x),$
    $\bold E S_{\delta+B} =\bold E S_{\delta+B}^{0}.
$
\end{corollary}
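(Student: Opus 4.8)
The plan is to identify the busy period $b_r(x)$ with the first passage time $\tilde\tau_{r}^{B+1}(x)$ of the reflected process and then to read off the answer from Remark~\ref{rdmg1B1} and Theorem~\ref{tdmg1B3}. First I would unwind the recursive definition (\ref{dmg1B37}): for an initial state $(r,x)$ with $r\in\overline{1,B+1}$ one has $d_{r,x}(t)=\overline D^{B+1}_{r}(x,t)$ for all $t<\tilde\tau_{r}^{B+1}(x)$, while $d_{r,x}\bigl(\tilde\tau_{r}^{B+1}(x)\bigr)$ equals the first coordinate of $Y_{0,0}(0)=(0,0)$, i.e. $0$. Hence $b_r(x)=\inf\{t:d_{r,x}(t)=0\}=\tilde\tau_{r}^{B+1}(x)$ as an identity of random variables (the $Y_{0,0}$-dynamics of (\ref{dmg1B38}) are irrelevant before this instant), and therefore
\[
   b_r^s(x)=\bold E\left[e^{-sb_r(x)};b_r(x)<\infty\right]
   =\bold E\left[e^{-s\tilde\tau_{r}^{B+1}(x)};\tilde\tau_{r}^{B+1}(x)<\infty\right]
   =\tilde v_r^s(x).
\]

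Next I would invoke Remark~\ref{rdmg1B1}, which (using homogeneity of $X_t$ in the first component) gives $\tilde v_r^s(x)=\overline v_{r-1}^s(x)$ for $r\in\overline{1,B+1}$; since $r-1\in\overline{0,B}$, the right-hand side falls under Theorem~\ref{tdmg1B3}. Summing (\ref{dmg1B27}) over $m\in\mathbb{N}$ (with $\sum_{m\ge1}(1-\lambda)\lambda^{m-1}=1$) and replacing $r$ by $r-1$ there yields
\[
   \overline v_{r-1}^s(x)=
   \frac{\tilde f_x(s)+(1-\tilde f(s))S^{s}_{B-(r-1)-1}(x)}
        {\tilde f(s)+(1-\tilde f(s))\bold E\,S^{s}_{\delta+B-1}},
\]
and $B-(r-1)-1=B-r$, which is exactly (\ref{dmg1B39}). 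Properness of $b_r(x)$ is inherited from Theorem~\ref{tdmg1B3}(ii), since $\overline\tau_{r-1}^{B}(x)$ is proper for every $r-1\in\overline{0,B}$; and the mean follows from $\bold E\,b_r(x)=\bold E\,\overline\tau_{r-1}^{B}(x)$ together with formula (\ref{dmg1B28}) evaluated at $r-1$, giving (\ref{dmg1B40}) after the same index shift $B-(r-1)-1=B-r$.

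Essentially everything here is a transcription of Theorem~\ref{tdmg1B3} under the shift dictated by Remark~\ref{rdmg1B1}. The only step that genuinely needs care — and the one I would spell out — is the pathwise identity $b_r(x)=\tilde\tau_{r}^{B+1}(x)$: one must check that on $[0,\tilde\tau_{r}^{B+1}(x))$ the queue-length coordinate never reaches $0$, so that no premature emptying occurs. This rests on the facts that $\overline D^{B+1}_{r}(x,t)$ is $\mathbb{Z}$-valued, starts at $r\ge1$, and, by the very definition $\tilde\tau_{r}^{B+1}(x)=\inf\{t:\overline D^{B+1}_{r}(x,t)<1\}$, stays $\ge1$ up to that time; once this is granted, the remaining identities are immediate.
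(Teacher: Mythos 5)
Your proposal is correct and follows the paper's own route: the paper proves this corollary by exactly the reduction you describe, namely identifying $b_r(x)$ with $\tilde\tau_r^{B+1}(x)$, invoking Remark~\ref{rdmg1B1} to replace it in distribution by $\overline\tau_{r-1}^{B}(x)$, and then reading off (\ref{dmg1B39}) and (\ref{dmg1B40}) from (\ref{dmg1B27}) and (\ref{dmg1B28}) of Theorem~\ref{tdmg1B3} with the index shift $r\mapsto r-1$. Your added verification of the pathwise identity $b_r(x)=\tilde\tau_r^{B+1}(x)$ is a welcome elaboration of what the paper leaves implicit, but it is not a different method.
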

These formulae follow straightforwardly from Theorem  \ref{tdmg1B3}
and Remark \ref{rdmg1B1}.

\subsection {Time of the first loss of a customer}

Suppose that the system starts functioning from the state $ (r,x) $ and denote
by $ l_{r}(x) $ the time of the first loss of the customer (a group of customers).

\begin{corollary}   \label{cdmg1B2}

Let
$ l_{r}^{s}(x)=\bold E\left[e^{-s l_{r}(x)}; l_{r}(x)<\infty\right] $
be the Laplace transform of  $ l_{r}(x).$
Then  the following relation is valid
\begin{align}                                      \label{dmg1B41}
&    l_{0}^{s}(0)=1-\bold E A_{0}^{\delta+B}(s)+
    \bold EQ^{s}_{\delta+B}
    \frac{\bold E A_{0}^{\delta+B}(s)-
    \frac{\mu}{s+\mu}\tilde A(s)-\frac{s}{s+\mu}}
    {\bold EQ^{s}_{\delta+B}-\frac{\mu}{s+\mu}\,\tilde Q(s)},\notag\\
&    l_{r}^{s}(x)=1- A_{x}^{k}(s)+
    Q^{s}_{k}(x)
    \frac{\bold E A_{0}^{\delta+B}(s)-
    \frac{\mu}{s+\mu}\tilde A(s)-\frac{s}{s+\mu}}
    {\bold EQ^{s}_{\delta+B}-\frac{\mu}{s+\mu}\,\tilde Q(s)},
\end{align}
where  $ r\in\overline{1,B+1},$ $k=B+1-r,$
$$
    \tilde Q(s)=
    \sum_{i=1}^{B+1}\bold P[\varkappa=i] Q_{B+1-i}^{s},\qquad
    \tilde A(s)=
    \sum_{i=1}^{B+1}\bold P[\varkappa=i] A_{0}^{B+1-i}(s).
$$
Note, that the random variables $ l_{0}(0), $ $ l_{r}(x) $
are proper,  and they have finite mathematical expectations.
\end{corollary}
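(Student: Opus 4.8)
\textit{Proof proposal.}
The plan is to derive a finite linear system for the transforms $l^s_r(x)$, $r\in\overline{1,B+1}$, together with the scalar $l^s_0(0)$, by conditioning on the first moment the system empties and on the first batch arriving at an empty buffer, and then to close it by means of Theorem~\ref{tdmg1B1}.

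First, fix $r\in\overline{1,B+1}$ and set $k=B+1-r$. Before the first loss no reflection from the upper wall $B+1$ has occurred, so on that time interval $\overline D^{B+1}_{r}(x,\cdot)$ coincides with $r+D_x(\cdot)$; hence the first overflow occurs precisely when $D_x$ leaves the interval $[-(r-1),\,B+1-r]$ through the top, while the busy period ends with no loss yet precisely when $D_x$ leaves this interval through the bottom. I would therefore take $\chi=\chi^{B}_{r-1}(x)$, which is exactly the first exit of $D_x$ in the setting of Theorem~\ref{tdmg1B1} with interval length $B$, upper level $k=B+1-r$ and lower level $r-1$. On $\mathfrak A^{k}$ one has $l_r(x)=\chi$, while on $\mathfrak A_{r-1}$ the recurrence (\ref{dmg1B37}) resets the system to $(0,0)$ with empty service clock at the instant $\chi$, so by the strong Markov property $l_r(x)\stackrel{d}{=}\chi+l_0(0)$ there. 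This yields, for all $r\in\overline{1,B+1}$,
\begin{align*}
   l^s_r(x)=\bold E\left[e^{-s\chi};\mathfrak A^{k}\right]
   +\bold E\left[e^{-s\chi};\mathfrak A_{r-1}\right]\,l^s_0(0).
\end{align*}
For the empty state, by (\ref{dmg1B38}) the system idles for an $\exp(\mu)$ time and then a batch $\varkappa$ arrives; it overflows the completely free buffer iff $\varkappa\ge B+2$, and otherwise the system moves to $(\varkappa\wedge(B+1),0)$ without a loss, so
\begin{align*}
   l^s_0(0)=\frac{\mu}{s+\mu}\left(1-\sum_{i=1}^{B+1}\bold P[\varkappa=i]
   +\sum_{i=1}^{B+1}\bold P[\varkappa=i]\,l^s_i(0)\right).
\end{align*}

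Next I would substitute the two-sided characteristics from (\ref{dmg1B13}), namely $\bold E[e^{-s\chi};\mathfrak A_{r-1}]=Q^s_{k}(x)/\bold E\,Q^s_{\delta+B}$ and $\bold E[e^{-s\chi};\mathfrak A^{k}]=1-A^{k}_x(s)-\bigl(Q^s_{k}(x)/\bold E\,Q^s_{\delta+B}\bigr)\bigl(1-\bold E\,A^{\delta+B}_0(s)\bigr)$, which gives
\begin{align*}
   l^s_r(x)=1-A^{k}_x(s)+\frac{Q^s_{k}(x)}{\bold E\,Q^s_{\delta+B}}
   \left(l^s_0(0)-1+\bold E\,A^{\delta+B}_0(s)\right),\qquad k=B+1-r.
\end{align*}
Inserting this with $x=0$ and $r=i\in\overline{1,B+1}$ (so $k=B+1-i$) into the equation for $l^s_0(0)$, and abbreviating $\tilde Q(s)=\sum_{i=1}^{B+1}\bold P[\varkappa=i]\,Q^s_{B+1-i}$ and $\tilde A(s)=\sum_{i=1}^{B+1}\bold P[\varkappa=i]\,A^{B+1-i}_0(s)$, turns the whole system into a single scalar linear equation for $l^s_0(0)$; solving it and using $\tfrac{\mu}{s+\mu}+\tfrac{s}{s+\mu}=1$ produces the first line of (\ref{dmg1B41}), and feeding the resulting value back into the displayed formula for $l^s_r(x)$ gives the second line.

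Finally, for properness and the finiteness of $\bold E\,l_r(x)$ I would argue either analytically, by letting $s\downarrow 0$ in (\ref{dmg1B41}) (all of $Q^0_k(x)$, $A^k_x(0)$, $\tilde Q(0)$, $\tilde A(0)$ are finite and the right-hand sides collapse to $1$), or probabilistically, by regeneration at the visits of $Y_{r,x}(\cdot)$ to $(0,0)$: each cycle has finite mean length (it is dominated by a busy period, of finite mean by Theorem~\ref{tdmg1B3}, plus an $\exp(\mu)$ idle time) and contains a loss with probability bounded away from $0$, so $l_r(x)$ has a finite mean and in particular is a.s.\ finite. The main obstacle is the index bookkeeping: recognizing that the governing two-sided interval has length $B$ (not $B+1$), and handling the boundary case $\varkappa=B+1$ correctly in the transition out of $(0,0)$; once these are settled, the collapse to the single unknown $l^s_0(0)$ and the remaining algebra are routine.
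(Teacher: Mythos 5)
Your proposal is correct and follows essentially the same route as the paper: it sets up the identical linear system (conditioning on the first exit of $r+D_x$ from $[1,B+1]$, which by homogeneity is $\chi^{B}_{r-1}(x)$, plus the $\exp(\mu)$ idle-period equation with loss on $\{\varkappa>B+1\}$), substitutes the two-boundary transforms (\ref{dmg1B13}), and solves the resulting scalar equation for $l^{s}_{0}(0)$, exactly as in the paper's proof of Corollary \ref{cdmg1B2}. Your added regeneration/limit argument for properness and finite mean is a harmless supplement to what the paper merely asserts.
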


\begin{proof}
Let  $  r\in\overline{1,B+1}, $  $x\ge 0.$   Denote  by
$ \tilde\chi_{r}^{B+1}(x)=\inf\{t: r+ D_{x}(t)\notin[1,B+1] \} $
the first exit time  from  the interval $ [1,B+1]$ by the process $ r+ D_{x}(t). $
Since  the process $ X_{t} $ is homogeneous with respect to the first component,
then the random variable  $\tilde \chi_{r}^{B+1}(x) $
is identically distributed as  $ \chi_{r-1}^{B}(x) $
and its Laplace transforms are determined by the formulae of Theorem \ref{tdmg1B1}.
In accordance with the definition of the process $ Y_{r,x}(t) $
we can write the following system of the  equations for the functions
$ l_{r}^{s}(x),$ $ l_{0}^{s}(0) $
\begin{align}                                   \label{dmg1B42}
&     l_{r}^{s}(x)=V^{B+1-r}(x,s)+
      V_{r-1}(x,s)\,l_{0}^{s}(0),
      \qquad  r\in\overline{1,B+1},\;  x\ge 0, \notag \\
&     l_{0}^{s}(0)=\frac{\mu}{s+\mu}\hat a_{B+1}+
      \frac{\mu}{s+\mu}\sum_{i=1}^{B+1}
      a_{i} l_{i}^{s}(0),
\end{align}
where $a_{i}=\bold P[\varkappa=i],$ $\hat a_{i}=\bold P[\varkappa>i],$
and (\ref{dmg1B13})
\begin{align}                                       \label{dmg1B43}
&  V_{r-1}(x,s)=
   \frac{Q_{B+1-r}^{s}(x)}
   {\bold E\,Q_{\delta+B}^{s}},\\
&  V^{B+1-r}(x,s)=
   1- A^{B+1-r}_{x}(s)-
   \frac{Q_{B+1-r}^{s}(x)} {\bold E\,Q_{\delta+B}^{s}}
   \left(1- \bold E\,A^{\delta+B}_{0}(s) \right),\notag
\end{align}
Substituting the right-hand side of the first equation of (\ref{dmg1B42}) for
$x=0$ into the second one,  we get
\begin{align*}
      l_{0}^{s}(0)=\frac{\mu}{s+\mu}\,\hat a_{B+1}+
      \frac{\mu}{s+\mu}\sum_{i=1}^{B+1}
      a_{i}V^{B+1-i}(0,s)+
      \frac{\mu}{s+\mu}\sum_{i=1}^{B+1}
      a_{i}V_{i-1}(0,s) l_{0}^{s}(0).
\end{align*}
The latter equation yields
\begin{align*}
   l_{0}^{s}(0)= \frac{\mu}{s+\mu}
   \left(\hat a_{B+1}+
   \sum_{i=1}^{B+1}\limits a_{i}V^{B+1-i}(0,s)\right)
   \left(1-\frac{\mu}{s+\mu}
   \sum_{i=1}^{B+1}\limits a_{i}V_{i-1}(0,s)\right)^{-1}.
\end{align*}
Taking into account the first formula of (\ref{dmg1B42})
and (\ref{dmg1B43}),
we derive  the equalities (\ref{dmg1B41}) of the  corollary.
\end{proof}

\subsection{Number of the  customers in the system }

Let  $\nu_{s}\sim\exp(s) $ be the exponential  random variable
with parameter $ s>0. $  Introduce   the  transient  probabilities
of the process   $ d_{r,x}(t)_{\{t\ge0\}}: $
\begin{align*}
&   q_{r,x}^{s}(u) =\bold P[d_{r,x}(\nu_{s})\le u],\quad
    q_{0,0}^{s}(u) =\bold P[d_{0,0}(\nu_{s})\le u],\quad
    r,u\in\overline{1,B+1},\\
&   q_{r,x}^{s}(0)=\bold P[d_{r,x}(\nu_{s})=0],\quad
    q_{0,0}^{s}(0)=\bold P[d_{0,0}(\nu_{s})=0].
\end{align*}
Denote
$
\tilde b(s)=  \hat a_{B}\, b_{B+1}^{s}(0)+
    \sum_{i=1}^{B} a_{i}\, b_{i}^{s}(0).
$

\begin{theorem}  \label{tdmg1B6}

The  distribution of the number of the  customers  at  time $ \nu_{s} $
is  such that
\begin{align*}
&     q_{0,0}^{s}(u) = \bold E A_{0}^{\delta+u-1}(s)
     +\frac{C_{u}(s,\lambda)}{s+\mu-\mu\tilde b(s)}, \qquad
     q_{0,0}^{s}(B+1) =1-
     \frac{s}{s+\mu-\mu\tilde b(s)},\notag\\
&    q_{r,x}^{s}(u)=  A_{x}^{u-r}(s) +b_{r}^{s}(x)
     \frac{C_{u}(s,\lambda)} {s+\mu-\mu\,\tilde b(s)},\qquad
     q_{r,x}^{s}(B+1)= 1- \frac{s b_{r}^{s}(x)}{s+\mu-\mu\tilde b(s)},\\
&     q_{0,0}^{s}(0) =
     \frac{s}{s+\mu-\mu\tilde b(s)},\qquad
     q_{r,x}^{s}(0)=  \frac{s\, b_{r}^{s}(x)}{s+\mu-\mu\,\tilde b(s)},
\end{align*}
where
$$
    C_{u}(s,\lambda)=\frac{sQ_{u}^{s}}{1-\lambda}-s +
    \lambda(s+\mu)\left( A_{0}^{u}(s)- \bold E A_{0}^{\delta+u}(s)\right).
$$
\end{theorem}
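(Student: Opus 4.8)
The plan is to exploit the regenerative structure of $Y_{r,x}(\cdot)$ built into the recurrences (\ref{dmg1B37})--(\ref{dmg1B38}): the state $(0,0)$ is entered afresh at the start of every idle period, and both the observation time $\nu_s$ and the idle period $\tilde\mu\sim\exp(\mu)$ lack memory, so the problem reduces to solving a small closed system of linear equations in the variable $s$ and then simplifying the answer with the resolvent identities.

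First I would write the first-step equations inside a busy period. Fix $r\in\overline{1,B+1}$; conditioning on whether $\nu_s$ falls before or after $\tilde\tau_r^{B+1}(x)$, and using that $\nu_s-\tilde\tau_r^{B+1}(x)$ is again $\exp(s)$-distributed on $\{\nu_s\ge\tilde\tau_r^{B+1}(x)\}$ together with the homogeneity of $X_t$ in the first component, I obtain
$$
q_{r,x}^s(u)=\bold P\bigl[\overline D^{B+1}_r(x,\nu_s)\le u,\ \nu_s<\tilde\tau_r^{B+1}(x)\bigr]+\bold E\bigl[e^{-s\tilde\tau_r^{B+1}(x)}\bigr]\,q_{0,0}^s(u).
$$
By Remark \ref{rdmg1B1} and Corollary \ref{cdmg1B1} the last factor equals $b_r^s(x)$. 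For the first term I use the identity $\overline D^{B+1}_r(x,t)=r+\overline D^{B+1-r}_0(x,t)$ (immediate from (\ref{dmg1B25})), which also yields $\tilde\tau_r^{B+1}(x)=\overline\tau_{r-1,\,B+1-r}(x)$ in the notation of Theorem \ref{tdmg1B5}; hence the first term is exactly $\overline p^s_{r-1,\,B+1-r}(x,u-r)$, available in closed form from (\ref{dmg1B34}). For $u=0$ it vanishes, since the reflected path stays $\ge1$ on that event, so $q_{r,x}^s(0)=b_r^s(x)\,q_{0,0}^s(0)$.

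Next, from (\ref{dmg1B38}), conditioning on $\{\nu_s<\tilde\mu\}$ versus the first batch arrival (size $\varkappa$ truncated to $\min\{\varkappa,B+1\}$) and again using memorylessness,
$$
q_{0,0}^s(u)=\frac{s}{s+\mu}+\frac{\mu}{s+\mu}\Bigl(\sum_{i=1}^{B}a_i\,q_{i,0}^s(u)+\hat a_B\,q_{B+1,0}^s(u)\Bigr),\qquad u\in\overline{0,B+1}.
$$
Substituting the previous display at $x=0$ into this one and isolating the coefficient of $q_{0,0}^s(u)$, which collapses to $\tfrac{\mu}{s+\mu}\tilde b(s)$, I solve the resulting scalar equation for $q_{0,0}^s(u)$ and feed it back into the first-step equation to recover $q_{r,x}^s(u)$; the atoms $q_{0,0}^s(0)$ and $q_{r,x}^s(0)$ drop out at once, and the full-buffer probability is obtained as the complement $1-q^s(B)$.

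What then remains — and this is where the work is — is to check that the expression produced above coincides with $\bold E A_0^{\delta+u-1}(s)+C_u(s,\lambda)/(s+\mu-\mu\tilde b(s))$, i.e.\ to reduce, after cancelling the geometric $\delta$-weights, an identity relating the partial convolutions $\sum_i a_iA_0^{u-i}(s)$, the resolvent term $Q_u^s$, $A_0^u(s)$ and the $\delta$-averages $\bold E A_0^{\delta+u}(s)$, $\bold E A_0^{\delta+u-1}(s)$. I would do this by passing to generating functions: using $\mu\,\bold E\theta^{\varkappa}=k(\theta)+\mu$, the relation $\sum_k\theta^kA_0^k(s)=\frac{s}{s-k(\theta)}\bigl(\frac{1}{1-\theta}-\frac{\mathbb Q^s_\theta}{1-\lambda}\bigr)$ obtained in the proof of Lemma \ref{ldmg1B4}, the defining relation (\ref{dmg1B7}) for $\mathbb Q^s_\theta$, and the vanishing $\sum_{k\ge0}\lambda^kA_0^k(s)=0$ (equivalently $\mathbb Q^s_\lambda=1$), the claimed identity collapses to an identity between rational functions of $\theta$, and comparison of the coefficients of $\theta^u$ then finishes it. The delicate point is precisely this bookkeeping — the truncations $A_x^j(s)=0$ for $j<0$, the separate handling of $u=0$ and of the full buffer $u=B+1$ (where (\ref{dmg1B34}) supplies $\overline p^s_{r-1,B+1-r}(x,B+1-r)=1-b_r^s(x)$ instead of the generic formula), and keeping the $\delta$-weights straight throughout; everything else is routine.
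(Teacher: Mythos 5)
Your overall route is the paper's: decompose at the end of the busy period via the strong Markov property and memorylessness, identify the pre-$b_r(x)$ contribution with $\overline p^{\,s}_{r-1,\,B+1-r}(x,u-r)$ of Theorem \ref{tdmg1B5} through the shift $\overline D^{B+1}_{r}(x,t)=r+\overline D^{B+1-r}_{0}(x,t)$ (this is exactly how the paper obtains $\tilde p^{s}_{r,x}(u)=A^{u-r}_{x}(s)-b^{s}_{r}(x)\,\bold E A_{0}^{\delta+u-1}(s)$ and $\tilde p^{s}_{r,x}(B+1)=1-b^{s}_{r}(x)$), solve a scalar equation for $q^{s}_{0,0}(u)$, and finish with the identity $\mu\sum_{i=1}^{u}a_{i}A_{0}^{u-i}(s)-(s+\mu)A_{0}^{u}(s)=\frac{s}{1-\lambda}Q_{u}^{s}-s$, which your generating-function computation would indeed deliver.

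The genuine gap is in your empty-state equation. You insert the term $\frac{s}{s+\mu}$ for every $u\in\overline{0,B+1}$, i.e.\ you compute $\bold P[d(\nu_{s})\le u]$ including the atom at the empty state. But the quantities in the theorem are, for $u\in\overline{1,B+1}$, the probabilities of $\{1\le d(\nu_{s})\le u\}$: the stated formulas satisfy $q^{s}_{0,0}(0)+q^{s}_{0,0}(B+1)=1$ and $q^{s}_{0,0}(B+1)<1$, which is impossible for $\bold P[d(\nu_{s})\le B+1]$. Accordingly, the paper's system (\ref{dmg1B45}) carries no $\frac{s}{s+\mu}$ term for $u\in\overline{1,B}$; that term appears only in the $u=0$ equations. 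With your extra term the scalar solve yields $q^{s}_{0,0}(u)=\bigl(s+\mu\tilde p(s,u)\bigr)\bigl(s+\mu-\mu\tilde b(s)\bigr)^{-1}$ instead of $\mu\tilde p(s,u)\bigl(s+\mu-\mu\tilde b(s)\bigr)^{-1}$, so your final expressions exceed the stated ones by exactly the zero-atom $\frac{s}{s+\mu-\mu\tilde b(s)}$ (resp.\ $\frac{s\,b^{s}_{r}(x)}{s+\mu-\mu\tilde b(s)}$), and the verification you postpone to the end cannot close. The same misreading hits $u=B+1$: your complement $1-q^{s}(B)$ produces $\bold P[d(\nu_{s})=B+1]$, whereas the stated $q^{s}_{r,x}(B+1)=1-\frac{s\,b^{s}_{r}(x)}{s+\mu-\mu\tilde b(s)}$ equals $1-\bold P[d_{r,x}(\nu_{s})=0]$ and is obtained in the paper from the same recursion with $\tilde p^{s}_{r,x}(B+1)=1-b^{s}_{r}(x)$. (The paper's literal definition of $q^{s}_{r,x}(u)$ invites your reading, but the formulas to be proven fix the convention, and your derivation as written does not reproduce them.) The $u=0$ atoms you do obtain correctly.
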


\begin{corollary} \label{cdmg1B3}

Let  $ \pi_{i}=\lim_{t\to\infty}\limits \bold P[d_{(\cdot)}(t)=i ],$
$ i\in\overline{0,B+1} $
 be  the stationary  distribution  of the number of customers in the system
${\rm M^{\varkappa}|G^{\delta}|1|B}.$
Then
\begin{align*}
&    \pi_{0}=
    \left[1+\mu\bold E\eta\left(\frac{\lambda}{1-\lambda}
    \bold E Q_{\delta+B}+
    \sum_{i=0}^{B}\hat a_{i}Q_{B-i}\right)\right]^{-1},\\
&    \pi_{B+1}= 1-\pi_{0}(1+C_{B}(\lambda)),\qquad
    \pi_{i}=\pi_{0}\left(C_{i}(\lambda)-C_{i-1}(\lambda)\right),
    \quad i\in\overline{1,B},
\end{align*}
where
$
  \rho_{i}=\lim_{s\to 0}\limits s^{-1}\tilde\rho_{i}(s)=
  \int_{0}^{\infty}\bold P\left[\pi(t)=i\right]dt<\infty,
$
$ C_{0}(\lambda)=0,$
$$
    C_{u}(\lambda)=\frac{Q_{u}}{1-\lambda}-1+
    \lambda\mu\left( A_{0}^{u}- \bold E A_{0}^{\delta+u}\right),\quad
    A_{0}^{u}= \lim_{s\to 0}\limits \frac{1}{s}\,A_{0}^{u}(s)=
    \sum_{i=0}^{u} \rho_{i}\left[1-\frac{Q_{u-i}}{1-\lambda}\right].
$$
\end{corollary}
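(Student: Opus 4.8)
The plan is to derive the ergodic distribution from the transient probabilities of Theorem~\ref{tdmg1B6} by an Abelian passage $s\to0$. First I would observe that $\{d_{(\cdot)}(t)\}_{t\ge0}$ is a regenerative process: the instants at which the system empties (i.e.\ $d$ hits $0$) are regeneration epochs, and a regeneration cycle consists of an idle period $\tilde\mu\sim\exp(\mu)$ followed by a busy period started from a fresh batch. By Corollary~\ref{cdmg1B1} the busy periods $b_r(x)$ are proper with $\bold E\,b_r(x)<\infty$, so the mean cycle length is finite; since the first component takes finitely many values, the process is ergodic and the limits $\pi_i=\lim_{t\to\infty}\bold P[d_{(\cdot)}(t)=i]$ exist and do not depend on the initial state. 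Because $\bold P[d_{(\cdot)}(\nu_s)\le u]=s\int_0^\infty e^{-st}\bold P[d_{(\cdot)}(t)\le u]\,dt$, the Abelian theorem gives $\pi_i=\lim_{s\to0}\bigl(\bold P[d_{(\cdot)}(\nu_s)\le i]-\bold P[d_{(\cdot)}(\nu_s)\le i-1]\bigr)$, so it suffices to pass to the limit in the formulae of Theorem~\ref{tdmg1B6}.

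Next I would carry out that limit. The only nontrivial point is the common denominator $s+\mu-\mu\,\tilde b(s)$, which tends to $0$. Since each busy period is proper with finite mean, $b_r^s(0)=1-s\,\bold E\,b_r(0)+o(s)$, whence $\tilde b(s)=1-s\beta+o(s)$ with $\beta=\hat a_B\,\bold E\,b_{B+1}(0)+\sum_{i=1}^B a_i\,\bold E\,b_i(0)$, so that $s+\mu-\mu\,\tilde b(s)=s(1+\mu\beta)+o(s)$. All the remaining ingredients converge to their $s=0$ values: $Q_k^s(x)\to Q_k(x)$, $b_r^s(x)\to1$, $A_x^k(s)\to0$ with $s^{-1}A_0^k(s)\to A_0^k$ and $s^{-1}\bold E\,A_0^{\delta+k}(s)\to\bold E\,A_0^{\delta+k}$, while $s^{-1}C_u(s,\lambda)\to C_u(\lambda)$ (here one uses $\tfrac{s+\mu}{s}\bigl(A_0^u(s)-\bold E\,A_0^{\delta+u}(s)\bigr)\to\mu\bigl(A_0^u-\bold E\,A_0^{\delta+u}\bigr)$ and $sQ_u^s\to0$). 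Dividing numerators and this denominator by $s$ then yields $q_{0,0}^s(0)=\dfrac{s}{s+\mu-\mu\tilde b(s)}\to(1+\mu\beta)^{-1}=:\pi_0$, $\lim_{s\to0}\dfrac{C_u(s,\lambda)}{s+\mu-\mu\tilde b(s)}=\pi_0\,C_u(\lambda)$ and $\lim_{s\to0}\dfrac{s\,b_r^s(x)}{s+\mu-\mu\tilde b(s)}=\pi_0$ (which also shows the two families $q_{0,0}^s$ and $q_{r,x}^s$ have the same limit, as ergodicity requires). Reading off the limits of the cumulative probabilities and subtracting, with $C_0(\lambda)=0$, gives $\pi_i=\pi_0\bigl(C_i(\lambda)-C_{i-1}(\lambda)\bigr)$ for $i\in\overline{1,B}$ and $\pi_{B+1}=1-\pi_0-\sum_{i=1}^B\pi_i=1-\pi_0\bigl(1+C_B(\lambda)\bigr)$.

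It remains to bring $\pi_0=(1+\mu\beta)^{-1}$ into the stated closed form, i.e.\ to show $\beta=\bold E\eta\bigl(\tfrac{\lambda}{1-\lambda}\bold E\,Q_{\delta+B}+\sum_{i=0}^B\hat a_i\,Q_{B-i}\bigr)$. Using formula (\ref{dmg1B40}) at $x=0$, where $\bold E\eta_0=\bold E\eta$ so that $\bold E\,b_r(0)=\bold E\eta\bigl[\bold E\,S_{\delta+B-1}-S_{B-r}\bigr]$, together with $\hat a_B+\sum_{i=1}^B a_i=1$ and $S_{-1}=0$, one gets $\beta=\bold E\eta\bigl[\bold E\,S_{\delta+B-1}-\sum_{i=1}^B a_i\,S_{B-i}\bigr]$. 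Two elementary rearrangements then finish the proof: interchanging the order of summation in $\bold E\,S_{\delta+B-1}=(1-\lambda)\sum_{i\ge1}\lambda^{i-1}S_{i-1+B}$ with $S_k=\sum_{j=0}^kQ_j$ gives $\bold E\,S_{\delta+B-1}=S_B+\tfrac{\lambda}{1-\lambda}\bold E\,Q_{\delta+B}$; and writing $\hat a_i=\sum_{l>i}a_l$ and interchanging summation in $\sum_{i=0}^B\hat a_i\,Q_{B-i}$ gives $\sum_{i=0}^B\hat a_i\,Q_{B-i}=\sum_{l=1}^B a_l\bigl(S_B-S_{B-l}\bigr)+\hat a_B\,S_B=S_B-\sum_{i=1}^B a_i\,S_{B-i}$. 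Substituting both identities yields the claimed expression for $\pi_0$, and hence all the $\pi_i$. I expect the main obstacle to be not any single computation but the justification of the regenerative/Tauberian step --- checking that the mean regeneration cycle is finite and that the Abelian passage legitimately recovers $\lim_{t\to\infty}$ --- after which everything reduces to tracking the $s\to0$ expansions above and the two summation identities.
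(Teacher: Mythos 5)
Your route is the same one the paper takes: pass to the Abelian limit $s\to0$ in the formulae of Theorem~\ref{tdmg1B6}, expand the common denominator via $\tilde b(s)=1-s\beta+o(s)$, and identify $\pi_0=(1+\mu\beta)^{-1}$. Your justification of the existence of the limits (regeneration at the emptying epochs, finite mean cycle by Corollary~\ref{cdmg1B1}) is sound and more explicit than the paper's appeal to ``properties of the Laplace transforms'', and your closed-form computation of $\pi_0$ --- formula (\ref{dmg1B40}) at $x=0$ together with the identities $\bold E S_{\delta+B-1}=S_B+\tfrac{\lambda}{1-\lambda}\bold E Q_{\delta+B}$ and $\sum_{i=0}^{B}\hat a_i Q_{B-i}=S_B-\sum_{i=1}^{B}a_i S_{B-i}$ --- is correct and actually supplies a step the paper only asserts.

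There is, however, one concrete gap: the step ``read off the limits of the cumulative probabilities and subtract''. The quantities $q_{r,x}^{s}(u)$, $u\in\overline{1,B+1}$, produced by Theorem~\ref{tdmg1B6} are not $\bold P[d_{r,x}(\nu_s)\le u]$ despite the notation: in the second equation of (\ref{dmg1B45}) the contribution $\tfrac{s}{s+\mu}$ of the event $\{\nu_s<\tilde\mu\}$ (on which $d_{0,0}(\nu_s)=0\le u$) is omitted, so what is actually computed is $\bold P[1\le d_{r,x}(\nu_s)\le u]$, i.e.\ the cumulative probability with the atom at zero removed; this is also visible from $q_{0,0}^{s}(B+1)=1-q_{0,0}^{s}(0)\neq1$. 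Your own limit computations give $\lim_{s\to0}q_{0,0}^{s}(u)=\pi_0C_u(\lambda)$, so subtracting $q^{s}(0)$ from $q^{s}(1)$ as you propose yields $\pi_1=\pi_0\bigl(C_1(\lambda)-1\bigr)$ rather than $\pi_0C_1(\lambda)$, and with that value your probabilities sum to $1-\pi_0$, not $1$ (for $M|M|1|2$ it even gives $\pi_1/\pi_0=(\mu-\nu)/\nu$, which is plainly wrong). With the correct reading the boundary cases are $\pi_1=\lim_{s\to0}q^{s}(1)=\pi_0C_1(\lambda)$, $\pi_i=\lim_{s\to0}\bigl(q^{s}(i)-q^{s}(i-1)\bigr)$ for $i\in\overline{2,B}$, and $\pi_{B+1}=\lim_{s\to0}\bigl(q^{s}(B+1)-q^{s}(B)\bigr)=1-\pi_0\bigl(1+C_B(\lambda)\bigr)$, which is exactly the statement; so the fix is purely interpretative, but as written your subtraction at $i=1$ (and hence the value of $\pi_{B+1}$ you obtain by complementation) does not follow from the formulas you quote.
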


\begin{proof}
By
$
    \tilde p_{r,x}^{s}(u)=
    \bold P\left[d_{r,x}(\nu_{s})\le u; b_{r}(x)>\nu_{s} \right],
$
$ r,u\in\overline{1,B+1}$ denote the transient  probability   of the process
$ d_{r,x}(\nu_{s}) $  on the event $\{ b_{r}(x)>\nu_{s}\}. $
Taking  into account the  homogeneity of the  process  $ X_{t} $
with respect   to the first component, the  definition of   the process $ d_{r,x}(t)$
and the formulae (\ref{dmg1B34}) of Theorem  \ref{tdmg1B5},
we  derive
\begin{align*}
   \tilde p_{r,x}^{s}(B+1)=1-b_{r}^{s}(x),\qquad
   \tilde p_{r,x}^{s}(u)=A_{x}^{u-r}(s)-
   b_{r}^{s}(x) \bold EA_{0}^{\delta+u-1}(s),\quad
   u\in\overline{1,B},
\end{align*}
where  the function $ b_{r}^{s}(x) $  is determined   by (\ref{dmg1B39}).

In  accordance  with the definition of the process  $Y_{r,x}(t) $
we can write the following equations for the functions  $ q_{r,x}^{s}(u),$
$ q_{0,0}^{s}(u) $  for $ u\in\overline{1,B} $
\begin{align}                                    \label{dmg1B45}
&    q_{r,x}^{s}(u)= \tilde p_{r,x}^{s}(u)+
     b_{r}^{s}(x) q_{0,0}^{s}(u),\notag\\
&    q_{0,0}^{s}(u)= \frac{\mu}{s+\mu}
     \left[ \hat a_{B}\, q_{B+1,0}^{s}(u)+
    \sum_{i=1}^{B} a_{i}\, q_{i,0}^{s}(u) \right].
\end{align}
Substituting  the right-hand side  of the second  equation  into the first one, we  get
\begin{align*}
&    q_{r,x}^{s}(u)= \tilde p_{r,x}^{s}(u)+
     b_{r}^{s}(x) \frac{\mu}{s+\mu}\,\tilde q(s,u),
\end{align*}
where
$
    \tilde q(s,u)= \hat a_{B}\, q_{B+1,0}^{s}(u)+
    \sum_{i=1}^{B} a_{i}\, q_{i,0}^{s}(u).
$
Letting $ x=0 $ in the latter equality implies that
\begin{align*}
&  \hat a_{B}\, q_{B+1,0}^{s}(u)= \hat a_{B}\,\tilde p_{B+1,0}^{s}(u)+
    \hat a_{B}\, b_{B+1}^{s}(0) \frac{\mu}{s+\mu}\,\tilde q(s,u),\\
&  \sum_{i=1}^{B} a_{i}\, q_{i,0}^{s}(u)=
    \sum_{i=1}^{B} a_{i}\,\tilde p_{i,0}^{s}(u)+
    \sum_{i=1}^{B} a_{i}\, b_{i}^{s}(0) \frac{\mu}{s+\mu}\,\tilde q(s,u).
\end{align*}
Adding   these equalities,  we  obtain the function  $ \tilde q(s,u) $
\begin{align*}
   \tilde q(s,u)={\tilde p(s,u)}
    \left({1-\frac{\mu}{s+\mu}\,\tilde b(s)}\right)^{-1},
   \quad u\in\overline{1,B},
\end{align*}
where
$
\tilde b(s)=  \hat a_{B}\, b_{B+1}^{s}(0)+
    \sum_{i=1}^{B} a_{i}\, b_{i}^{s}(0),
$
\begin{align*}
    \tilde p(s,u)=
     \hat a_{B}\, \tilde p_{B+1,0}^{s}(u)+
    \sum_{i=1}^{B} a_{i}\,\tilde p_{i,0}^{s}(u) =
     \sum_{i=1}^{u} a_{i}\,A_{0}^{u-i}(s) -
     \tilde b(s) \bold E A_{0}^{\delta+u-1}(s).
\end{align*}
Substituting the expression for the function  $ \tilde q(s,u) $
into  (\ref{dmg1B45}), we find  that
\begin{align}                                              \label{dmg1B46}
&     q_{0,0}^{s}(u) = \bold E A_{0}^{\delta+u-1}(s)
    +\frac{C_{u}(s,\lambda)}{s+\mu-\mu\tilde b(s)},\notag\\
&    q_{r,x}^{s}(u)=  A_{x}^{u-r}(s) +b_{r}^{s}(x)
     \frac{C_{u}(s,\lambda)} {s+\mu-\mu\,\tilde b(s)},
\end{align}
where
$$
    C_{u}(s,\lambda)=\frac{sQ_{u}^{s}}{1-\lambda}-s +
    \lambda(s+\mu)\left( A_{0}^{u}(s)- \bold E A_{0}^{\delta+u}(s)\right).
$$
To derive the latter equalities, we used the following relation
$$
  \mu \sum_{i=1}^{u} a_{i}\,A_{0}^{u-i}(s) - (s+\mu) A_{0}^{u}(s)=
   \frac{s}{1-\lambda}\,Q_{u}^{s}-s,  \qquad  u\in\overline{1,\infty},
$$
which  follows from the definition   of the  function $ A_{x}^{u}(s). $
If $ u=B+1, $  then $\tilde p_{r,x}^{s}(B+1)=1-b_{r}^{s}(x),$
$\tilde p(s,B+1)=1-\tilde b(s)$
and  the  following  formulae are valid
\begin{align}                                              \label{dmg1B47}
     q_{0,0}^{s}(B+1) =1-
     \frac{s}{s+\mu-\mu\tilde b(s)},\qquad
     q_{r,x}^{s}(B+1)= 1- \frac{s b_{r}^{s}(x)}{s+\mu-\mu\tilde b(s)}.
\end{align}
Taking into account the definition of the process  $Y_{r,x}(t), $
we can write  the following   equations for  the  functions
$ q_{r,x}^{s}(0),$ $ q_{0,0}^{s}(0) $
\begin{align*}
&    q_{r,x}^{s}(0)= b_{r}^{s}(x) q_{0,0}^{s}(0),\\
&    q_{0,0}^{s}(0)=\frac{s}{s+\mu}+ \frac{\mu}{s+\mu}
     \left[ \hat a_{B} q_{B+1,0}^{s}(0)+
    \sum_{i=1}^{B} a_{i} q_{i,0}^{s}(0) \right].
\end{align*}
Solving this system  yields
\begin{align}                                               \label{dmg1B48}
     q_{0,0}^{s}(0) =
     \frac{s}{s+\mu-\mu\tilde b(s)},\qquad
     q_{r,x}^{s}(0)=  \frac{s\, b_{r}^{s}(x)}{s+\mu-\mu\,\tilde b(s)}.
\end{align}
Observe  that
$
    \lim_{s\to 0}\limits A_{x}^{u}(s)=
    \lim_{s\to 0}\limits\bold E A_{0}^{\delta+u}(s)=0,
$
$ \lim_{s\to 0}\limits b_{r}^{s}(x)=1.$
It  follows from   (\ref{dmg1B46})--(\ref{dmg1B48})  and  properties
of the Laplace transforms   that
\begin{align*}
&     \lim_{s\to 0}\limits q_{r,x}^{s}(u)=
     \lim_{s\to 0}\limits q_{0,0}^{s}(u)= q(u)=
     \lim_{t\to \infty}\limits \bold P[d_{(\cdot)}(t)\le u],\qquad
     u\in\overline{1,B+1}, \\
&     \lim_{s\to 0}\limits q_{r,x}^{s}(0)=
     \lim_{s\to 0}\limits q_{0,0}^{s}(0)= q(0)=
     \lim_{t\to \infty}\limits \bold P[d_{(\cdot)}(t)=0].
\end{align*}
The  formulae  (\ref{dmg1B48})  imply that
\begin{align*}
    q(0)= \lim_{s\to 0}\limits \frac{s}{s+\mu-\mu\tilde b(s)}=
    \left[1+\mu\bold E\eta\left(\frac{\lambda}{1-\lambda}
    \bold E Q_{\delta+B}+\sum_{i=0}^{B}\hat a_{i}Q_{B-i}\right)\right]^{-1}.
\end{align*}
In  view of (\ref{dmg1B46}), (\ref{dmg1B47}) we find
\begin{align*}
    q(B+1)= 1-q(0),\qquad q(u)=q{(0)}C_{u}(\lambda),
    \quad u\in\overline{1,B},
\end{align*}
where
$
  \rho_{i}=\lim_{s\to 0}\limits s^{-1}\tilde\rho_{i}(s)=
  \int_{0}^{\infty}\bold P\left[\pi(t)=i\right]dt<\infty,
$
$$
    C_{u}(\lambda)=\frac{Q_{u}}{1-\lambda}-1+
    \lambda\mu\left( A_{0}^{u}- \bold E A_{0}^{\delta+u}\right),\quad
    A_{0}^{u}= \lim_{s\to 0}\limits \frac{1}{s}\,A_{0}^{u}(s)=
    \sum_{i=0}^{u} \rho_{i}\left[1-\frac{Q_{u-i}}{1-\lambda}\right].
$$
\end{proof}

\subsection{  $ {\rm M^{\varkappa}|G|1|B} $ system }
Let us  stress  the following fact. If  we  set the  parameter
$\lambda=0$ in the  geometrical distribution
 $ \bold P[\delta=n]=(1-\lambda)\lambda^{n-1},$ $n\in\mathbb{N},$
$\lambda \in[0,1)$ of the random variable  $ \delta, $ then  $ \bold
P[\delta=1]=1.$ In  other  words it means  that  the process  $
\{D_{x}(t)\}_{t\ge 0} $ has  unit negative jumps   at the times
instants $\{\eta_{n}(x)\}_{n\in\mathbb{N}} $  and  $
\delta_{N_{x}(t)}=N_{x}(t).$ Then,
it follows from (\ref{dmg1B2}) that
\begin{align}                                   \label{dmg1B49}
   D_{x}(t)=
   \pi(t)-N_{x}(t)\in\mathbb{Z},\qquad t\ge0.
\end{align}
We will call this process  a difference of the compound Poisson process and a
simple renewal process. Setting the parameter $\lambda=0$   in  the
statements of Lemma \ref{ldmg1B1} leads to the following result.

\begin{lemma}  \label{ldmg1B5}

For $s>0$  the equation
$
\theta =\tilde f(s-k(\theta))
$
\\has a
unique  solution  $c(s)$ inside the circle $|\theta|<1.$ This
solution is positive,  $c(s)\in(0,1).$ If
$\bold E [\varkappa], \bold E [\eta]<\infty,$
$\rho= \mu\bold E[\varkappa]\,\bold E[\eta],$ then for  $\rho>1,$
$\lim_{s\to 0}\limits c(s)=c\in(0,1);$ and for  $\rho\le 1,$
$\lim_{s\to 0}\limits c(s)=1.$
\end{lemma}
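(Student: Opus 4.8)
The plan is to observe that Lemma~\ref{ldmg1B5} is simply the specialization $\lambda=0$ of Lemma~\ref{ldmg1B1}. Setting $\lambda=0$ in equation (\ref{dmg1B5}) turns $\theta-\lambda=(1-\lambda)\tilde f(s-k(\theta))$ into $\theta=\tilde f(s-k(\theta))$, the interval $(\lambda,1)$ into $(0,1)$, and the loading factor $\rho=\mu(1-\lambda)\bold E[\varkappa]\bold E[\eta]$ into $\rho=\mu\bold E[\varkappa]\bold E[\eta]$. Since the process (\ref{dmg1B49}) is a bona fide instance of the difference of a compound Poisson process and a compound renewal process (with $\delta\equiv1$, i.e. $\delta\sim ge(0)$), all hypotheses of Lemma~\ref{ldmg1B1} are in force, and its conclusion specializes verbatim to the assertion of Lemma~\ref{ldmg1B5}. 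Thus the shortest route is a one-line reference to Lemma~\ref{ldmg1B1} (equivalently to Lemma~1 of \cite{2Ka15}, or the Spitzer-type argument in \cite{Sp1}).

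For completeness I would also spell out a self-contained argument. \emph{Existence and uniqueness of a root in $|\theta|<1$.} For $s>0$ and $|\theta|\le1$ one has $\Re k(\theta)=\mu(\Re\bold E\theta^{\varkappa}-1)\le0$, hence $\Re(s-k(\theta))\ge s>0$, so $|\tilde f(s-k(\theta))|\le\bold E e^{-\Re(s-k(\theta))\eta}\le\tilde f(s)<1$ because $\eta>0$ a.s. On the circle $|\theta|=1$ this gives $|\tilde f(s-k(\theta))|<1=|\theta|$; since $\theta\mapsto\tilde f(s-k(\theta))=\sum_{i\in\mathbb Z^{+}}\theta^{i}\int_{0}^{\infty}e^{-st}\bold P[\eta\in dt,\pi(t)=i]$ is analytic inside the unit disc, Rouch\'{e}'s theorem yields exactly one zero $c(s)$ of $\theta-\tilde f(s-k(\theta))$ in $|\theta|<1$. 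Restricting to real $\theta$, the function $F(\theta)=\theta-\tilde f(s-k(\theta))$ satisfies $F(0)=-\tilde f(s+\mu)<0$ and $F(1)=1-\tilde f(s)>0$, so the intermediate value theorem places the unique zero in $(0,1)$; hence $c(s)$ is real, positive and $c(s)\in(0,1)$.

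\emph{Behaviour as $s\downarrow0$.} Writing $F(\theta,s)=\theta-\tilde f(s-k(\theta))$ and differentiating $F(c(s),s)=0$, we obtain $\partial_{s}F=-\tilde f'(s-k(\theta))>0$ and $\partial_{\theta}F=1+\tilde f'(s-k(\theta))k'(\theta)>0$ at the root, where $F$ passes from negative to positive values; hence $c'(s)<0$, so $c(s)$ increases as $s\downarrow0$ to a limit $c^{*}\in(0,1]$, which by continuity satisfies $c^{*}=\tilde f(-k(c^{*}))=:g(c^{*})$. On $[0,1]$ the function $g$ is increasing and strictly convex (because $\tilde f''>0$, $\tilde f'<0$, $k''\ge0$), with $g(1)=1$ and $g'(1)=(-\bold E\eta)(-\mu\bold E\varkappa)=\rho$. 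If $\rho\le1$, the tangent inequality $g(\theta)\ge1+\rho(\theta-1)\ge\theta$ shows $\theta=1$ is the only fixed point of $g$ in $(0,1]$, so $c^{*}=1$. If $\rho>1$, $g$ has a second fixed point $c\in(0,1)$; moreover $F(c,s)=c-\tilde f(s-k(c))=\tilde f(-k(c))-\tilde f(s-k(c))>0$ since $\tilde f$ is strictly decreasing and $s-k(c)>-k(c)$, while $F(0,s)<0$ and $c(s)$ is the only zero of $F(\cdot,s)$ in $(0,1)$; hence $c(s)<c$ for every $s>0$, so $c^{*}\le c<1$, and being a fixed point of $g$ it must equal $c$.

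The only genuinely delicate point is the supercritical case $\rho>1$: one has to exclude the spurious limit $c^{*}=1$, and the decisive input is the monotonicity $c(s)\downarrow$ as $s\downarrow0$ (equivalently the sign comparison $F(c,s)>0$ against the $s=0$ fixed point). Everything else is routine; and since all of it is already packaged in Lemma~\ref{ldmg1B1}, in the paper it suffices to remark that Lemma~\ref{ldmg1B5} is the case $\lambda=0$ of that lemma.
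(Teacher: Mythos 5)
Your proposal is correct and follows the paper's own route: the paper proves Lemma \ref{ldmg1B5} precisely by setting $\lambda=0$ in Lemma \ref{ldmg1B1} (whose proof is in turn delegated to the Spitzer-type argument cited in \cite{Sp1} and Lemma 1 of \cite{2Ka15}). Your additional self-contained Rouch\'{e}/fixed-point argument, including the monotonicity of $c(s)$ used to rule out the limit $1$ when $\rho>1$, is sound and merely supplies the details the paper leaves to the references.
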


The statements  of  Lemma's \ref{ldmg1B2}--\ref{ldmg1B4} and
Theorem's \ref{tdmg1B1}--\ref{tdmg1B6} can be reformulated in a similar way.
Letting  $\lambda=0$ in the defining formula
(\ref{dmg1B8}) for all $s,x\ge0$
we get
\begin{align}                                         \label{dmg1B50}
   Q_{k}^{s}(x) =\frac{1}{2\pi i}
   \oint_{|\theta|=\alpha}\,\frac1{\theta^{k+1}}\,
   \frac{\tilde f_x(s-k(\theta))}
   { \tilde f(s-k(\theta))-\theta}\,d\theta,
   \quad \alpha\in(0,c(s))
\end{align}
the  resolvent sequence of the process  $\{D_{x}(t)\}_{t\ge 0},$ which
is given by  (\ref{dmg1B49}). This resolvent sequence has been introduced in
\cite{Kad8}. Setting   $\lambda=0$ in
(\ref{dmg1B9}), (\ref{dmg1B10}), we obtain
\begin{align}                                         \label{dmg1B51}
   f^{k}_{x}(dl,m,s)
&  = e^{-s(l-x)}\,\frac{1-F(l)}{1-F(x)}\bold I\{l>x\}
   p_{k}^{m}(d(l-x))\notag\\
&  + \Phi^{s}_{0}(dl,m)\,Q_{k}^{s}(x)
   -e^{-sl}\,[1-F(l)]\sum_{i=0}^{k}
   Q_{i}^{s}(x)\,p_{k-i}^{m}(dl),\notag\\
  \bold E e^{-s\tau^{k}(x)}
&  = 1-\frac{s}{s-k(c(s))}\,Q_{k}^{s}(x)-A_{x}^{k}(s)
\end{align}
the Laplace transforms of  the upper one-boundary functionals  of
the process  $\{D_{x}(t)\}_{t\ge 0}$ (\ref{dmg1B49}), where
$
   \tilde\rho_{i}(s)=
   s\int_{0}^{\infty}e^{-st}\bold P[\pi(t)=i]\,dt,
$
$$
    A_{x}^{k}(s)=
    \sum_{i=0}^{k}\limits\tilde\rho_i(s)\,\left[1-Q_{k-i}^{s}(x)\right],
   \quad\Phi^{s}_{0}(dl,m)=e^{-sl}[1-F(l)]
   \sum_{k\in\mathbb{Z}^{+}}c(s)^{k}p_{k}^{m}(dl).
$$
We have introduced the  auxiliary functions  and the resolvent  sequence  of
the process (\ref{dmg1B49}), therefore we  can state the following result.

\begin{corollary}  \label{cdmg1B4}

Let  $\{D_{x}(t)\}_{t\ge 0}$  be the  difference of the compound Poisson process
and  the  renewal process (\ref{dmg1B49}),
$\{Q_{k}^{s}(x)\}_{k\in\mathbb{Z}^{+}},$  $x\ge0$
be the  resolvent sequence of the process given by
(\ref{dmg1B50}), $ Q_{k}^{s}\stackrel{\rm def}{=} Q_{k}^{s}(0).$  Then
\begin{itemize}
\item[{\rm(i)}]
the Laplace transforms  $ V^{x}_{r}(m,s),$ $ V^{k}_{x}(dl,m,s)$ of
the joint distribution of  $\{\chi,L,T\}$ satisfy  the  following
equalities for all  $x,s\ge0,$ $m\in\mathbb{N}$
\begin{align*}
&  V_{r}(x,i,s)=
   \frac{Q_{k}^{s}(x)}{Q_{B+1}^{s}}\,\delta_{i1},\;
   V^{k}(x,dl,i,s)= f^{k}(x,dl,i,s)-
   \frac{Q_{k}^{s}(x)}{Q_{B+1}^{s}}f^{B+1}(0,dl,i,s),
\end{align*}
where  $\delta_{ij}$ is the  Kronecker symbol and
$ f^{k}(x,dl,m,s) $ is given by (\ref{dmg1B51});
\item[{\rm(ii)}]
for the Laplace transforms of the first exit time $\chi$ from
the interval  by the process $\{D_{x}(t)\}_{t\ge 0}$ the formulae hold
\begin{align*}
&  \bold E\left[e^{-s\chi};\mathfrak{A}_{r}\right]=
   \frac{Q_{k}^{s}(x)}{Q_{B+1}^{s}},\quad
   \bold E\left[e^{-s\chi};\mathfrak{A}^{k}\right]=1- A_{x}^{k}(s)
  +\frac{Q_{k}^{s}(x)}{Q_{B+1}^{s}}(1- A_{0}^{B+1}(s));
\end{align*}
\item[{\rm(iii)}]
the exit probabilities from the interval by the process
$ \{D_{x}(t)\}_{t\ge 0} $ satisfy the equalities
$$
   \bold P[\mathfrak{A}_{r}]= \frac{Q_{k}(x)}{Q_{B+1}},\qquad
   \bold P[\mathfrak{A}^{k}]=1-\frac{Q_{k}(x)}{Q_{B+1}},
$$
where  the  resolvent sequence of the process
$\{Q_{k}(x)\}_{k\in\mathbb{Z}^{+}},$ $x\ge0,$
$ Q_{k}\stackrel{\rm def}{=} Q_{k}(0)$
is given by (\ref{dmg1B50}) for $ s=0. $
\end{itemize}
\end{corollary}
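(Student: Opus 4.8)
The plan is to derive Corollary \ref{cdmg1B4} as a direct specialisation of Theorem \ref{tdmg1B1} to the boundary value $\lambda=0$ of the geometric parameter, with no new probabilistic input. First I would observe that Lemmas \ref{ldmg1B1}--\ref{ldmg1B3} and Theorems \ref{tdmg1B1}--\ref{tdmg1B6} were all proved for an arbitrary $\delta\sim ge(\lambda)$ with $\lambda\in[0,1)$, so they remain valid at $\lambda=0$; at this value the geometric law degenerates to the unit mass at $1$, $\bold P[\delta=1]=1$, whence $\delta_{N_x(t)}=N_x(t)$ and the process (\ref{dmg1B2}) reduces to (\ref{dmg1B49}). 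I would also note that Lemma \ref{ldmg1B5} already guarantees that the root $c(s)$ of $\theta=\tilde f(s-k(\theta))$ still lies in $(0,1)$, so the contour $|\theta|=\alpha$ with $\alpha\in(0,c(s))$ in (\ref{dmg1B50}) is well defined and no analyticity or convergence issue arises.

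Next I would record how $\lambda=0$ acts on each ingredient of Theorem \ref{tdmg1B1}. The factor $(1-\lambda)\lambda^{m-1}$ collapses to the Kronecker symbol $\delta_{m1}$ (equivalently, the lower overshoot $T$ is identically $0$ because the negative jumps have unit size); since now $\delta\equiv1$, the $\delta$-averages reduce to single terms, $\bold E\,Q_{\delta+B}^{s}=Q_{B+1}^{s}$, $\bold E f^{\delta+B}(0,dl,m,s)=f^{B+1}(0,dl,m,s)$ and $\bold E\,A_{0}^{\delta+B}(s)=A_{0}^{B+1}(s)$; and the resolvent kernel (\ref{dmg1B8}), together with the functions $f^{k}(x,dl,m,s)$ and $A_x^{k}(s)$, turns into (\ref{dmg1B50}) and (\ref{dmg1B51}) respectively, since the factors $1-\lambda$ and $(1-\lambda)^{-1}$ drop out and $\lambda-\theta$ becomes $-\theta$.

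Then I would obtain the three parts by plain substitution: inserting these reductions into (\ref{dmg1B12}) gives part (i), into (\ref{dmg1B13}) gives part (ii) after collecting terms, and into part (iii) of Theorem \ref{tdmg1B1} --- together with $Q_k(x)=Q_k^{0}(x)$ read off from (\ref{dmg1B50}) at $s=0$ --- gives part (iii). The only step demanding any care, and the closest thing to an obstacle, is the index bookkeeping: one must check that putting $\delta\equiv1$ carries every superscript $\delta+B$ consistently into $B+1$, so that the denominator $\bold E\,Q_{\delta+B}^{s}$ becomes $Q_{B+1}^{s}$ and the formulae come out with exactly the indices stated in the corollary. This is immediate once the degeneracy $\delta\equiv1$ is invoked, so beyond the work already done for Theorem \ref{tdmg1B1} essentially no computation remains.
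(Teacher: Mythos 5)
Your proposal is correct and follows exactly the paper's own route: the paper proves Corollary \ref{cdmg1B4} simply by setting $\lambda=0$ in Theorem \ref{tdmg1B1}, so that $\delta\equiv1$, $\bold E\,Q_{\delta+B}^{s}=Q_{B+1}^{s}$, and $(1-\lambda)\lambda^{m-1}$ degenerates to $\delta_{m1}$, precisely as you describe. (Your substitution in fact yields a minus sign in front of $\frac{Q_{k}^{s}(x)}{Q_{B+1}^{s}}\left(1-A_{0}^{B+1}(s)\right)$, matching (\ref{dmg1B13}); the plus sign printed in the corollary appears to be a typo in the paper, not a flaw in your argument.)
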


In order to prove the corollary, one has to put $\lambda=0$ in the statements
of Theorem \ref{dmg1B1}. The results obtained in  Corollary \ref{cdmg1B4}
can be  applied for studying  the queueing  systems
${\rm M^{\varkappa}|G|1|B}$ $ (\bold P[\delta=1]=1) $ with finite
waiting room.
To illustrate this, we   now  will  determine   the distribution
of the  busy period, the  number of the  customers,  time of the   first
loss of the  customer and  the number of lost customers at time of the first loss.
\begin{corollary}   \label{cdmg1B5}

Let   $ \bold P[\delta=1]=1, $
$ b_{r}^{s}(x)=\bold E\left[e^{-s b_{r}(x)}; b_{r}(x)<\infty\right] $
be the Laplace transform  of the   busy period  of type   $(r,x) $
 of  ${\rm M^{\varkappa}|G|1|B}$  system.
Then  the following    relation is valid
\begin{align*}
   b_{r}^{s}(x) =
   \frac{\tilde f_{x}(s)+
   (1-\tilde f(s))S^{s}_{B-r}(x)}
   {\tilde f(s)+ (1-\tilde f(s))
   S^{s}_{B}},\quad x\ge0,
    \quad r\in\overline{1,B+1},
\end{align*}
where
$
    S^{s}_{k}(x)=\sum_{i=0}^{k} Q_{i}^{s}(x),
$
$ S^{s}_{k}(x)=0,$ for $k<0.$
The random variable  $ b_{r}(x) $ is  proper and
\begin{align*}
   \bold E b_{r}(x)=
   \bold E\eta_{x}- \bold E\eta+
   \bold E\eta\left[ S_{B}-S_{B-r}(x) \right]<\infty,
\end{align*}
where $ S_{k}(x)=S_{k}^{0}(x).$
\end{corollary}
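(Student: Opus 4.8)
The plan is to obtain Corollary \ref{cdmg1B5} simply as the $\lambda=0$ specialization of Corollary \ref{cdmg1B1} (equivalently, of Theorem \ref{tdmg1B3}(ii) combined with Remark \ref{rdmg1B1}). As explained at the beginning of Section 4.4, putting $\lambda=0$ in the law $\bold P[\delta=n]=(1-\lambda)\lambda^{n-1}$ forces $\bold P[\delta=1]=1$, so that the governing process becomes the difference (\ref{dmg1B49}) of the compound Poisson process and the simple renewal process, whose resolvent sequence is now the one furnished by (\ref{dmg1B50}). Every identity used to establish (\ref{dmg1B39})--(\ref{dmg1B40}) remains in force; the only quantities that need to be re-evaluated at $\lambda=0$ are the averaged partial sums $\bold E S^{s}_{\delta+B-1}$ and $\bold E S_{\delta+B-1}$.

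First I would recall from Remark \ref{rdmg1B1} that $b_{r}^{s}(x)=\overline v^{s}_{r-1}(x)$ for $r\in\overline{1,B+1}$, so that summing (\ref{dmg1B27}) over the (geometric) overshoot $\overline T$ gives
\[
b_{r}^{s}(x)=\frac{\tilde f_{x}(s)+(1-\tilde f(s))\,S^{s}_{B-r}(x)}{\tilde f(s)+(1-\tilde f(s))\,\bold E S^{s}_{\delta+B-1}},\qquad \bold E S^{s}_{\delta+B-1}=(1-\lambda)\sum_{i\ge 1}\lambda^{i-1}S^{s}_{i-1+B}(0).
\]
Then I would set $\lambda=0$: in this series only the $i=1$ term survives, so $\bold E S^{s}_{\delta+B-1}=S^{s}_{B}$, which is precisely the claimed formula for $b_{r}^{s}(x)$ once one adopts the convention $S^{s}_{k}(x)=0$ for $k<0$ (needed in the boundary case $r=B+1$, where $B-r=-1$). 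The same substitution in (\ref{dmg1B28}) collapses $\bold E S_{\delta+B-1}$ to $S_{B}$ and yields the stated expression for $\bold E b_{r}(x)$.

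Properness, $\bold P[b_{r}(x)<\infty]=1$, is inherited directly from Theorem \ref{tdmg1B3}(ii), whose argument nowhere uses $\lambda>0$, and finiteness of the mean follows from (\ref{dmg1B28}) (equivalently from $\bold E b_{r}(x)=-\frac{d}{ds}b^{s}_{r}(x)\big|_{s=0}$). There is no genuinely hard step here; the only points requiring care are bookkeeping — confirming that each identity behind (\ref{dmg1B39})--(\ref{dmg1B40}) stays valid at the boundary value $\lambda=0$, in particular that $\{Q^{s}_{k}(x)\}$ is now the resolvent sequence attached to the process (\ref{dmg1B49}) through (\ref{dmg1B50}) — and making sure the degenerate geometric sum collapses to $S^{s}_{B}$ rather than to $S^{s}_{B-1}$.
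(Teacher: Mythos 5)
Your proposal is correct and coincides with the paper's own argument: the paper likewise obtains this corollary by setting $\lambda=0$ in the busy-period formulae (\ref{dmg1B39})--(\ref{dmg1B40}) of Corollary \ref{cdmg1B1} (itself Theorem \ref{tdmg1B3}(ii) plus Remark \ref{rdmg1B1}), noting that the geometric average $\bold E S^{s}_{\delta+B-1}$ degenerates to $S^{s}_{B}$ and that $\{Q^{s}_{k}(x)\}$ becomes the resolvent sequence (\ref{dmg1B50}) of the process (\ref{dmg1B49}). Your extra care about the convention $S^{s}_{k}(x)=0$ for $k<0$ in the case $r=B+1$ is consistent with the paper, which states the result with the same convention.
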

These formulae were derived in \cite{Kad8}. To prove the corollary,
it suffices to set $\lambda=0$ in the equalities of Corollary \ref{cdmg1B3}.

\begin{corollary}   \label{cdmg1B6}

Let $ \bold P[\delta=1]=1, $ $ l_{r}(x)$  be the time of the first loss of the  batch
of the  customers in the  system  ${\rm M^{\varkappa}|G|1|B},$ and
$ l_{r}^{s}(x)=\bold E\left[e^{-s l_{r}(x)}; l_{r}(x)<\infty\right] $
be the Laplace  transform of  $ l_{r}(x).$
Then  the following formula holds
\begin{align*}
    l_{r}^{s}(x)=1- A_{x}^{k}(s)-
    Q^{s}_{k}(x)
     \frac{s}{s+\mu}
    \left(1-\frac{\mu}{s+\mu}\,\tilde Q(s)/Q^{s}_{B+1}\right)^{-1},
\end{align*}
where  $ r\in\overline{0,B+1},$ $k=B+1-r,$
$
    \tilde Q(s)=
    \sum_{i=1}^{B+1}\bold P[\varkappa=i] Q_{B+1-i}^{s}.
$
The random variable $ l_{r}(x) $ is proper and has finite mathematical
expectation.
\end{corollary}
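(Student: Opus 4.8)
The plan is to deduce the corollary from Corollary \ref{cdmg1B2} by setting $\lambda=0$, in the same way that Corollaries \ref{cdmg1B4} and \ref{cdmg1B5} are obtained by setting $\lambda=0$ in Theorem \ref{tdmg1B1} and Corollary \ref{cdmg1B1} respectively. Putting $\lambda=0$ forces $\bold P[\delta=1]=1$, hence $\delta+B\equiv B+1$, so in (\ref{dmg1B41}) one simply replaces $\bold E\,Q_{\delta+B}^{s}$ by $Q_{B+1}^{s}$ and $\bold E\,A_{0}^{\delta+B}(s)$ by $A_{0}^{B+1}(s)$, while $\tilde Q(s)$ and $\tilde A(s)$ keep the definitions given in Corollary \ref{cdmg1B2}. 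For $r\in\overline{1,B+1}$, $k=B+1-r$, this yields
\[
   l_{r}^{s}(x)=1-A_{x}^{k}(s)+Q_{k}^{s}(x)\,
   \frac{A_{0}^{B+1}(s)-\tfrac{\mu}{s+\mu}\tilde A(s)-\tfrac{s}{s+\mu}}
        {Q_{B+1}^{s}-\tfrac{\mu}{s+\mu}\,\tilde Q(s)},
\]
and the first line of (\ref{dmg1B41}) at $r=0$ (so $k=B+1$, $x=0$) produces the very same right-hand side; this is why the resulting formula is uniform in $r\in\overline{0,B+1}$. An equivalent route is to repeat the argument of Corollary \ref{cdmg1B2} from scratch with $\lambda=0$ in the linear system (\ref{dmg1B42})--(\ref{dmg1B43}) and solve for $l_{r}^{s}(x)$ and $l_{0}^{s}(0)$.

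The substantive step is to collapse the fraction. For this I would invoke the identity
\[
   (s+\mu)A_{0}^{u}(s)-\mu\sum_{i=1}^{u}\bold P[\varkappa=i]\,A_{0}^{u-i}(s)=s\bigl(1-Q_{u}^{s}\bigr),\qquad u\in\mathbb{N},
\]
which is the $\lambda=0$ case of the relation already used in the proof of Theorem \ref{tdmg1B6}; it follows by comparing the coefficients of $\theta^{u}$ in the generating-function identity $\mathbb{A}_{x}^{\theta}(s)=\tfrac{s}{s-k(\theta)}\bigl(\tfrac{1}{1-\theta}-\mathbb{Q}_{\theta}^{s}(x)\bigr)$ established inside the proof of Lemma \ref{ldmg1B4}, together with $k(\theta)=\mu(\bold E\,\theta^{\varkappa}-1)$. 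Taking $u=B+1$ gives $(s+\mu)A_{0}^{B+1}(s)-\mu\tilde A(s)=s(1-Q_{B+1}^{s})$, so after multiplying numerator and denominator of the fraction by $s+\mu$ the numerator becomes $s(1-Q_{B+1}^{s})-s=-sQ_{B+1}^{s}$ and the denominator becomes $(s+\mu)Q_{B+1}^{s}-\mu\tilde Q(s)=(s+\mu)Q_{B+1}^{s}\bigl(1-\tfrac{\mu}{s+\mu}\tilde Q(s)/Q_{B+1}^{s}\bigr)$. Cancelling $Q_{B+1}^{s}$ identifies the fraction with $-\tfrac{s}{s+\mu}\bigl(1-\tfrac{\mu}{s+\mu}\tilde Q(s)/Q_{B+1}^{s}\bigr)^{-1}$, which is exactly the closed form claimed.

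For properness and the finite mean I would let $s\to0$ in the closed form: $A_{x}^{k}(s)\to0$ because $\tilde\rho_{i}(s)\to0$, and $\tfrac{s}{s+\mu}\to0$, while $\bigl(1-\tfrac{\mu}{s+\mu}\tilde Q(s)/Q_{B+1}^{s}\bigr)^{-1}$ stays bounded since $\tilde Q(s)<Q_{B+1}^{s}$ (the resolvent sequence is strictly increasing in its index, equivalently the exit probability $\bold P[\mathfrak{A}^{B+1-r}]>0$ for $r\ge1$ because the downward skip-free process moves up before emptying with positive probability); hence $l_{r}^{0}(x)=1$, i.e.\ $\bold P[l_{r}(x)<\infty]=1$. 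For the expectation, each of $Q_{k}^{s}(x)$, $A_{x}^{k}(s)$, $\tilde Q(s)$ is differentiable at $s=0$ with $\tfrac{d}{ds}A_{x}^{k}(s)\big|_{s=0}=\sum_{i=0}^{k}\rho_{i}[1-Q_{k-i}(x)]<\infty$, so $\bold E\,l_{r}(x)=-\tfrac{d}{ds}l_{r}^{s}(x)\big|_{s=0}$ is a finite combination of finite quantities; alternatively this follows from positive recurrence of $d_{r,x}(\cdot)$ on the finite set $\overline{0,B+1}$.

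I do not expect a genuine difficulty: the statement is a specialization plus an algebraic simplification. The two points deserving a line of justification are the coefficient extraction producing the $A_{0}^{B+1}(s)$ identity, and the strict monotonicity of the resolvent sequence that keeps the denominator of the closed form away from zero (needed for the passage $s\to0$, in particular in the critical regime $\rho=1$ where $c(0)=1$); both are routine given the apparatus of Sections~1--2.
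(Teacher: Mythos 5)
Your proposal is correct and follows essentially the paper's intended route: the paper proves Corollary \ref{cdmg1B6} simply by setting $\lambda=0$ in Corollary \ref{cdmg1B2}, and the collapse of the fraction to $-\frac{s}{s+\mu}\bigl(1-\frac{\mu}{s+\mu}\tilde Q(s)/Q^{s}_{B+1}\bigr)^{-1}$ rests on exactly the identity $\mu\sum_{i=1}^{u}a_{i}A_{0}^{u-i}(s)-(s+\mu)A_{0}^{u}(s)=sQ_{u}^{s}-s$ that the paper itself records in the proof of Theorem \ref{tdmg1B6}. You merely make explicit the algebra, the uniformity in $r$ (including $r=0$, $x=0$), and the properness/finite-mean argument that the paper asserts without proof; these additions are sound.
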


The function $ l_{r}^{s}(x) $  was found in \cite{Kad8} in a different form.

\begin{corollary}  \label{cdmg1B7}

The  distribution of the  number  of  the customers in the system
${\rm M^{\varkappa}|G|1|B}$  at  time  $ \nu_{s} $ is such that
for $r\in\overline{0,B+1},$ $ u\in\overline{1,B+1} $
\begin{align*}
&
     q_{r,x}^{s}(u)=  A_{x}^{u-r}(s) +b_{r}^{s}(x)
     \frac{s(Q_{u}^{s}-1)} {s+\mu-\mu\,\tilde b(s)},\quad
     q_{r,x}^{s}(B+1)= 1- \frac{s b_{r}^{s}(x)}{s+\mu-\mu\tilde b(s)},\\
&
     q_{r,x}^{s}(0,0)=
     \frac{s\, b_{r}^{s}(x)}{s+\mu-\mu\,\tilde b(s)}, \qquad
     b_{0}^{s}(x)\stackrel{\rm def}{=}1.
\end{align*}
\end{corollary}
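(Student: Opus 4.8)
The plan is to obtain this statement by specialization of Theorem~\ref{tdmg1B6} to the degenerate parameter value $\lambda=0$. Recall that putting $\lambda=0$ in the geometric law $\bold P[\delta=n]=(1-\lambda)\lambda^{n-1}$ forces $\bold P[\delta=1]=1$, so that the governing process becomes the difference (\ref{dmg1B49}) of the compound Poisson process and a \emph{simple} renewal process, and the two-component model $Y_{r,x}(t)$ turns into the $ {\rm M^{\varkappa}|G|1|B} $ system. Hence all the ingredients entering Theorem~\ref{tdmg1B6} are already available: the busy-period transforms $b_r^s(x)$ and the quantity $\tilde b(s)=\hat a_B\,b_{B+1}^s(0)+\sum_{i=1}^B a_i\,b_i^s(0)$ are supplied by Corollary~\ref{cdmg1B5} with $\lambda=0$, the resolvent sequence $\{Q_k^s(x)\}$ is the one in (\ref{dmg1B50}), and $A_x^u(s)$ is the function introduced after (\ref{dmg1B51}).

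First I would evaluate at $\lambda=0$ the auxiliary quantity
$$
  C_u(s,\lambda)=\frac{sQ_u^s}{1-\lambda}-s+\lambda(s+\mu)\bigl(A_0^u(s)-\bold E A_0^{\delta+u}(s)\bigr)
$$
occurring in Theorem~\ref{tdmg1B6}: its last summand carries a factor $\lambda$ and therefore disappears, so $C_u(s,\lambda)$ collapses to $s(Q_u^s-1)$. Likewise, since $\delta\equiv1$ one has $\bold E A_0^{\delta+u-1}(s)=A_0^{u}(s)$ and $\bold E Q_{\delta+B}^{s}=Q_{B+1}^{s}$, so that the common denominator $s+\mu-\mu\tilde b(s)$ of Theorem~\ref{tdmg1B6} is exactly the one built from the $\lambda=0$ busy-period transforms of Corollary~\ref{cdmg1B5}. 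Substituting these identities into the formulae of Theorem~\ref{tdmg1B6} for $r\in\overline{1,B+1}$, $x\ge0$ then yields directly
$$
  q_{r,x}^{s}(u)=A_x^{u-r}(s)+b_r^s(x)\,\frac{s(Q_u^s-1)}{s+\mu-\mu\tilde b(s)},\qquad
  q_{r,x}^{s}(B+1)=1-\frac{s\,b_r^s(x)}{s+\mu-\mu\tilde b(s)},
$$
together with $q_{r,x}^{s}(0)=s\,b_r^s(x)/(s+\mu-\mu\tilde b(s))$.

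Next I would fold the idle initial state into the same expressions. The point is that the formulae of Theorem~\ref{tdmg1B6} for $q_{0,0}^s(u)$, $q_{0,0}^s(B+1)$ and $q_{0,0}^s(0)$ are recovered from the ones just written by replacing $b_r^s(x)$ with $1$ and reading $A_0^{u-0}(s)=A_0^u(s)$; so adopting the convention $b_0^s(x)\stackrel{\rm def}{=}1$ unifies all $r\in\overline{0,B+1}$. Finally, whenever $d_{r,x}(\nu_s)=0$ the server is idle and the age component is identically zero, i.e. $\{d_{r,x}(\nu_s)=0\}=\{Y_{r,x}(\nu_s)=(0,0)\}$, which is why the last quantity is written $q_{r,x}^{s}(0,0)$. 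Collecting the three cases gives precisely the displayed relations of the corollary.

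The argument is essentially bookkeeping and I do not expect a genuine obstacle; the only step that needs a moment's care is the collapse $C_u(s,\lambda)\mapsto s(Q_u^s-1)$ at $\lambda=0$ together with the accompanying check that no $\lambda$-dependent remnants survive in either the numerators or in the denominator $s+\mu-\mu\tilde b(s)$.
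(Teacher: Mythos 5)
Your proposal is correct and follows exactly the paper's route: the paper disposes of Corollaries \ref{cdmg1B6}--\ref{cdmg1B8} by setting $\lambda=0$ in Corollaries \ref{cdmg1B2}, \ref{cdmg1B3} and Theorem \ref{tdmg1B6}, which is precisely your specialization, including the collapse $C_u(s,\lambda)\to s(Q_u^s-1)$ and the identification of $\tilde b(s)$ with the $\lambda=0$ busy-period transforms of Corollary \ref{cdmg1B5}. Your explicit bookkeeping (e.g.\ $\bold E A_0^{\delta+u-1}(s)=A_0^u(s)$ for $\delta\equiv1$ and the convention $b_0^s(x)=1$) simply spells out what the paper leaves implicit.
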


\begin{corollary} \label{cdmg1B8}

Let  $ \pi_{i}=\lim_{t\to\infty}\limits \bold P[d_{(\cdot)}(t)=i ],$
$ i\in\overline{0,B+1} $
 be  the stationary distribution  of the number  of   the  customers in the system
${\rm M^{\varkappa}|G|1|B}.$
Then
\begin{align*}
&    \pi_{0}=
    \left(1+\mu\bold E\eta
    \sum_{i=0}^{B}\hat a_{i}Q_{B-i}\right)^{-1},\\
&    \pi_{B+1}= 1-\pi_{0}Q_{B},\qquad \pi_{i}=\pi_{0}(Q_{i}-Q_{i-1}),
    \quad i\in\overline{1,B},
\end{align*}
where  the  resolvent sequence of the process
$\{Q_{k}(x)\}_{k\in\mathbb{Z}^{+}},$ $ Q_{k}\stackrel{\rm def}{=} Q_{k}(0),$
$ Q_{0}=1 $ is given by (\ref{dmg1B50}) for $ s=0. $
\end{corollary}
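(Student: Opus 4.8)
The plan is to derive Corollary \ref{cdmg1B8} as the degenerate case $\lambda=0$ of Corollary \ref{cdmg1B3}, which is legitimate because $\delta\sim ge(\lambda)$ is introduced for $\lambda\in[0,1)$ and the value $\lambda=0$ is precisely the ${\rm M^{\varkappa}|G|1|B}$ system, where $\bold P[\delta=1]=1$ and $D_x(t)=\pi(t)-N_x(t)$ by (\ref{dmg1B49}). First I would record two preliminary facts valid in this regime: the resolvent sequence is the one given by (\ref{dmg1B50}), and $Q_0=Q_0(0)=1$. The latter follows either from the recurrent relation $Q_0(x)=(1-\lambda)(\lambda+(1-\lambda)f_0)^{-1}f_0(x)$ evaluated at $x=0$, $\lambda=0$, or directly from (\ref{dmg1B50}) by computing the residue at $\theta=0$ using $k(0)=-\mu$ and $\tilde f_0=\tilde f$.

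Next I would take $\lambda=0$ in the three formulae of Corollary \ref{cdmg1B3}. In the bracket defining $\pi_0$ the only term affected is $\frac{\lambda}{1-\lambda}\bold E\,Q_{\delta+B}$; since $\bold E\,Q_{\delta+B}=(1-\lambda)\sum_{i\ge1}\lambda^{i-1}Q_{i+B}$ tends to $Q_{B+1}$ and stays bounded as $\lambda\to0$, the prefactor $\lambda/(1-\lambda)$ annihilates this term and leaves $\pi_0=(1+\mu\bold E\eta\sum_{i=0}^B\hat a_iQ_{B-i})^{-1}$. Then I would specialize $C_u(\lambda)=\frac{Q_u}{1-\lambda}-1+\lambda\mu(A_0^u-\bold E\,A_0^{\delta+u})$: at $\lambda=0$ the last summand vanishes, since $A_0^u$ and $\bold E\,A_0^{\delta+u}\to A_0^{1+u}$ are finite, and $Q_u/(1-\lambda)$ becomes $Q_u$, so $C_u(0)=Q_u-1$; in particular $C_0(0)=Q_0-1=0$, in agreement with the convention $C_0(\lambda)=0$. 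Substituting into $\pi_i=\pi_0(C_i(\lambda)-C_{i-1}(\lambda))$ and $\pi_{B+1}=1-\pi_0(1+C_B(\lambda))$ gives $\pi_i=\pi_0(Q_i-Q_{i-1})$ for $i\in\overline{1,B}$ and $\pi_{B+1}=1-\pi_0Q_B$, the claimed expressions.

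Since everything reduces to a direct substitution there is essentially no real obstacle; the only point needing a word of justification is the interchange of $\lim_{\lambda\to0}$ with the geometric averages $\bold E\,Q_{\delta+B}$ and $\bold E\,A_0^{\delta+u}$, which is harmless thanks to the exponential decay $Q_k^s\sim c(s)^{-k}$ with $c(s)>\lambda$. Alternatively one may avoid even this by regarding Corollary \ref{cdmg1B3} as an identity valid for every $\lambda\in[0,1)$ and simply evaluating at $\lambda=0$.
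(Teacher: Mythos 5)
Your proposal is correct and follows the same route as the paper, which proves Corollary \ref{cdmg1B8} precisely by setting $\lambda=0$ in the formulae of Corollary \ref{cdmg1B3} (the paper states this in one line, while you fill in the substitution details such as $C_u(0)=Q_u-1$ and $Q_0=1$). Your closing remark is the cleaner justification: since Corollary \ref{cdmg1B3} holds for every $\lambda\in[0,1)$, one simply evaluates at $\lambda=0$, and no limit interchange is needed.
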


To  prove Corollaries  \ref{cdmg1B6}--\ref{cdmg1B8}, it suffices to set
$\lambda=0$  in the  equalities of Corollaries   \ref{cdmg1B2}, \ref{cdmg1B3}
and Theorem  \ref{tdmg1B6}.   Note, that the formulae of Corollary
\ref{cdmg1B8} were  obtained in  \cite{Kad8}.

Suppose that the system starts functioning  from  the state  $(r,x),$
$ r\in\overline{0,B+1}.$ 
Denote  by  $ i_{r,x} $  the number of the
lost customers   at time  of the  first loss $l_{r}(x).$

\begin{corollary}   \label{cdmg1B9}

The  generating function
$
   L_{r,x}^{s}(z)= \bold E \left[e^{-sl_{r}(x)}z^{i_{r,x}} \right]
$
of the joint distribution  $ \{l_{r}(x),i_{r,x}\} $
is such that
\begin{align}                                       \label{qqq51}
    L_{r,x}^{s}(z)
&  =\frac{\mu}{s}\sum_{i=0}^{k}\limits E^{i}(z)
   \left[A^{k-i}_{x}(s)-A^{k-i-1}_{x}(s)\right]+\notag\\
&   + \mu\, \frac{Q_{k}^{s}(x)}{Q_{B+1}^{s}}\,
   \frac{\sum_{i=0}^{B+1}\limits E^{i}(z)
   \left[Q_{B+1-i}^{s}-Q_{B-i}^{s}\right]}
   {s+\mu-{\mu}\,\tilde Q(s)/Q^{s}_{B+1}},
   \qquad k=B+1-r,
\end{align}
\begin{align*}
  \bold E & \left[e^{-sl_{r}(x)};i_{r,x}=n\right]
    =\frac{\mu}{s}\left[a_{n}A^{k}_{x}(s)+
    \sum_{i=1}^{k}\limits (a_{n+i}-a_{n+i-1})A^{k-i}_{x}(s)\right]+ \\
&
   + \mu\,\frac{Q_{k}^{s}(x)}{Q_{B+1}^{s}}\,
   \frac{a_{n}Q_{B+1}^{s}+\sum_{i=1}^{B+1}\limits
   (a_{n+i}-a_{n+i-1})Q_{B+1-i}^{s}}
   {s+\mu-{\mu}\,\tilde Q(s)/Q^{s}_{B+1}},
   \qquad n\in\mathbb{N},
\end{align*}
where $ E^{i}(z)=\bold E\left[z^{\varkappa-i};\varkappa>i\right],$
$i\in\mathbb{Z}^{+}.$
\end{corollary}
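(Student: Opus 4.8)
The plan is to follow the scheme of the proof of Corollary~\ref{cdmg1B2}, now carrying along the marker $z^{i_{r,x}}$. Fix $r\in\overline{1,B+1}$, $x\ge0$ and put $k=B+1-r$. As long as the buffer is non-empty the number of customers coincides with the unreflected process $r+D_x(t)$; the first exit of $D_x$ from $[-(r-1),k]$ (equivalently, the first exit time $\tilde\chi_r^{B+1}(x)$ of $r+D_x$ from $[1,B+1]$) is therefore either through the upper boundary --- and then the first loss occurs at that instant, $l_r(x)=\tilde\chi_r^{B+1}(x)$ and $i_{r,x}$ equals the overshoot $T$ of $D_x$ across $k$ --- or through the lower boundary --- and then the buffer empties, the process $Y$ regenerates at $(0,0)$ and, by the strong Markov property together with the homogeneity of $X_t$ in its first component, the subsequent evolution contributes the independent factor $L_{0,0}^s(z)$. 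This gives, for $r\in\overline{1,B+1}$,
\begin{align*}
   L_{r,x}^s(z)=\sum_{m\ge1}z^m\,V^{k}(x,m,s)+V_{r-1}(x,s)\,L_{0,0}^s(z),\qquad k=B+1-r,
\end{align*}
with $V^{k}(x,m,s)=\int_0^\infty V^{k}(x,dl,m,s)$, to be complemented by the equation for the idle state obtained from (\ref{dmg1B38}): after an $\exp(\mu)$ holding time a batch of size $\varkappa$ arrives, is lost above $B+1$ when $\varkappa>B+1$, and otherwise moves the state to $(\varkappa,0)$, so
\begin{align*}
   L_{0,0}^s(z)=\frac{\mu}{s+\mu}\,E^{B+1}(z)+\frac{\mu}{s+\mu}\sum_{i=1}^{B+1}a_i\,L_{i,0}^s(z).
\end{align*}

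The central step is to make the first relation explicit. By Corollary~\ref{cdmg1B4}(i) one has $V^{k}(x,m,s)=f^{k}(x,m,s)-\frac{Q_k^s(x)}{Q_{B+1}^s}f^{B+1}(0,m,s)$, and from the representation (\ref{dmg1B51}) of $f^{k}$, together with the identity $\sum_{m\ge1}z^m p_j^m(dt)=\mu\sum_{a=0}^j\rho_a(t)E^{j-a}(z)\,dt$, one computes $\sum_{m\ge1}z^m f^{k}(x,m,s)$. The boundary density $\Phi_0^s(\cdot,m)$ contributes a term proportional to $Q_k^s(x)\sum_{i\ge0}c(s)^iE^i(z)$, which cancels against the corresponding piece of $\frac{Q_k^s(x)}{Q_{B+1}^s}f^{B+1}(0,\cdot,s)$; what remains is finite, the key being the generating-function identity
\begin{align*}
   \sum_{j\ge0}\theta^j\Bigl(\gamma_j(x,s)-\sum_{a=0}^jQ_a^s(x)\gamma_{j-a}(0,s)\Bigr)=\frac{1-(1-\theta)\mathbb{Q}_\theta^s(x)}{s-k(\theta)}=\frac1s\sum_{j\ge0}\theta^j\bigl(A_x^j(s)-A_x^{j-1}(s)\bigr),
\end{align*}
where $\gamma_j(x,s)=\int_0^\infty e^{-st}\mathbf{P}[\eta_x>t]\rho_j(t)\,dt$, which is checked at once from (\ref{dmg1B7}), the characteristic equation $c(s)=\tilde f(s-k(c(s)))$ and $A_x^u(s)=\sum_{a=0}^u\tilde\rho_a(s)[1-Q_{u-a}^s(x)]$. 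One arrives at
\begin{align*}
   \sum_{m\ge1}z^m\,V^{k}(x,m,s)&=\frac{\mu}{s}\sum_{i=0}^{k}E^i(z)\bigl[A_x^{k-i}(s)-A_x^{k-i-1}(s)\bigr]\\
   &\qquad-\frac{Q_k^s(x)}{Q_{B+1}^s}\cdot\frac{\mu}{s}\sum_{i=0}^{B+1}E^i(z)\bigl[A_0^{B+1-i}(s)-A_0^{B-i}(s)\bigr],
\end{align*}
with $A_x^{-1}(s)=0$.

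Inserting this together with $V_{r-1}(x,s)=Q_k^s(x)/Q_{B+1}^s$ into the first relation, putting $x=0$ and substituting into the equation for $L_{0,0}^s(z)$ leaves a single scalar linear equation; its $L_{0,0}^s(z)$-coefficient equals $1-\frac{\mu}{s+\mu}\sum_{i=1}^{B+1}a_iQ_{B+1-i}^s/Q_{B+1}^s=(s+\mu-\mu\,\tilde Q(s)/Q_{B+1}^s)/(s+\mu)$, which produces the denominator of (\ref{qqq51}). Solving for $L_{0,0}^s(z)$, feeding it back into the expression for $L_{r,x}^s(z)$, and using once more the resolvent identity $\mu\sum_{i=1}^u a_iA_0^{u-i}(s)-(s+\mu)A_0^u(s)=\frac{s}{1-\lambda}Q_u^s-s$ (with $\lambda=0$) to rewrite the $A_0$-sums as the resolvent sums $\sum_{i=0}^{B+1}E^i(z)[Q_{B+1-i}^s-Q_{B-i}^s]$, yields (\ref{qqq51}); the case $r=0$ (so $x=0$, $k=B+1$, and the ratio $Q_k^s(x)/Q_{B+1}^s$ equals $1$) is precisely the resulting formula for $L_{0,0}^s(z)$. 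The coefficientwise formula for $\mathbf{E}[e^{-sl_r(x)};i_{r,x}=n]$ then follows by reading off $[z^n]$ in (\ref{qqq51}): since $[z^n]E^i(z)=a_{n+i}$ for $n\ge1$, an Abel summation of $\sum_i a_{n+i}[A_x^{k-i}(s)-A_x^{k-i-1}(s)]$ and of $\sum_i a_{n+i}[Q_{B+1-i}^s-Q_{B-i}^s]$ telescopes them into $a_nA_x^k(s)+\sum_{i=1}^k(a_{n+i}-a_{n+i-1})A_x^{k-i}(s)$ and $a_nQ_{B+1}^s+\sum_{i=1}^{B+1}(a_{n+i}-a_{n+i-1})Q_{B+1-i}^s$; properness of $l_r(x)$ and finiteness of $\mathbf{E}\,i_{r,x}$ follow from $L_{r,x}^s(1)=l_r^s(x)$ (Corollary~\ref{cdmg1B6}) and from differentiating (\ref{qqq51}) in $z$ at $z=1$, $s=0$.

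The main obstacle is the central step --- the explicit evaluation of $\sum_{m\ge1}z^m V^{k}(x,m,s)$. It requires careful bookkeeping of the three terms of (\ref{dmg1B51}), of the convolutions against $p_j^m(\cdot)$ and the density $\Phi_0^s(\cdot,m)$, of the cancellation of the infinite tail $\sum_{i\ge0}c(s)^iE^i(z)$ between the $f^{k}$ and $\frac{Q_k^s(x)}{Q_{B+1}^s}f^{B+1}$ contributions, and of the identification of the remaining finite sums with the differences $A_x^{k-i}(s)-A_x^{k-i-1}(s)$ via the characteristic equation for $c(s)$ and the definition of $A_x^u(s)$; once this is settled the rest is a one-unknown linear solve followed by a routine coefficient extraction.
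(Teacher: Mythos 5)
Your proof is correct and follows essentially the same route as the paper: the same two-equation system coming from the exit/regeneration decomposition at the first exit of $r+D_{x}(t)$ from $[1,B+1]$, the same one-unknown linear solve producing the denominator $s+\mu-\mu\,\tilde Q(s)/Q^{s}_{B+1}$, and the same coefficient extraction in $z$ for the second formula. The only difference is bookkeeping in the middle step: you expand $\sum_{m\ge1}z^{m}V^{k}(x,m,s)$ directly, so the $\Phi^{s}_{0}$-contribution (the term $Q_{k}^{s}(x)F(c(s),z)$ of the paper's auxiliary formula for $\tilde f^{k}(x,z,s)$) cancels at the outset, and you invoke explicitly the identity $\mu\sum_{i=1}^{u}a_{i}A_{0}^{u-i}(s)-(s+\mu)A_{0}^{u}(s)=sQ_{u}^{s}-s$ to pass from the $A$-sums to the resolvent sums $\sum_{i=0}^{B+1}E^{i}(z)\left[Q^{s}_{B+1-i}-Q^{s}_{B-i}\right]$, steps the paper leaves implicit.
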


\begin{proof}
In accordance with the total probability  law  we can write
the following system  of  equations  for  $ L_{r,x}^{s}(z) $

\begin{align}                              \label{qqq52}
&   L_{r,x}^{s}(z)=\tilde V^{k}(x,z,s)
    +V_{r-1}(x,s)L_{0,0}^{s}(z),
    \quad r\in\overline{1,B+1}\notag\\
&   L_{0,0}^{s}(z)=\frac{\mu}{s+\mu}\,E^{B+1}(z)+
    \frac{\mu}{s+\mu}\,\sum_{r=1}^{B+1}
    a_{r}L_{r,0}^{s}(z),
\end{align}
where
\begin{align}                                     \label{qqq53}
    \tilde V^{k}(x,z,s)=
    \bold E\left[e^{-s\chi_{r-1}^{B}(x)}
    z^{T};\mathfrak{A}^{k}\right]=
    \tilde f^{k}(x,z,s)-
    \frac{Q_{k}^{s}(x)}{Q_{B+1}^{s}}\,\tilde f^{B+1}(0,z,s).
\end{align}
The  equality  (\ref{dmg1B9})  implies for the  function
$
    \tilde f^{k}(x,z,s)= \bold E\left[e^{-s\tau^{k}(x)}
    z^{T^{k}(x)};\mathfrak{B}^{k}(x)\right]
$
that
\begin{align}                                     \label{qqq54}
    \tilde f^{k}(x,z,s)=
    \frac{\mu}{s}\sum_{i=0}^{k}\limits E^{i}(z)
    \left[A^{k-i}_{x}(s)-A^{k-i-1}_{x}(s)\right]+
    Q_{k}^{s}(x)F(c(s),z),
\end{align}
where $ F(c(s),z)=\frac{1-c(s)}{1-c(s)/z}\,\frac{k(z)-k(c(s)}{s-k(c(s)}.  $
Solving the  system (\ref{qqq52})  and taking into  account  (\ref{qqq53})
for all  $ r\in\overline{0,B+1},$ $x\ge 0,$ we get
\begin{align*}
    L_{r,x}^{s}(z)= \tilde f^{k}(x,z,s) +
   \mu\, \frac{Q_{k}^{s}(x)}{Q_{B+1}^{s}}\,
   \frac{E^{B+1}(z)+\sum_{i=1}^{B+1}\limits a_{i}\tilde f^{B+1-i}(0,z,s)
   -\tilde f^{B+1}(0,z,s)}
   {s+\mu-{\mu}\,\tilde Q(s)/Q^{s}_{B+1}}.
\end{align*}
The  first  formula of the  corollary   follows from the latter
equality  and  from  (\ref{qqq54}).  Comparing the coefficients   of
$z^{n},$ $n\in\mathbb{N}$ in both sides of (\ref{qqq51}), we derive the second
formula of the corollary.
\end{proof}


\end{document}